\newtheorem{thm}{Theorem}[section]
\newtheorem*{thm*}{Theorem}
\newtheorem{prop}[thm]{Proposition}
\newtheorem{lem}[thm]{Lemma}
\newtheorem{cor}[thm]{Corollary}
\newtheorem{introthm}{Theorem}
\newtheorem{introprop}{Proposition}
\theoremstyle{definition}
\newtheorem{defin}[thm]{Definition}
\newtheorem{introdefin}{Definition}
\newtheorem{rem}[thm]{Remark}
\newtheorem{ex}[thm]{Example}
\DeclareMathOperator{\dist}{dist}
\DeclareMathOperator{\diam}{diam}
\DeclareMathOperator{\opspan}{span}
\DeclareMathOperator{\opint}{int}
\DeclareMathOperator{\oplk}{lk_2}
\DeclareMathOperator{\opang}{\sphericalangle}
\DeclareMathOperator{\opid}{id}
\DeclareMathOperator{\aff}{aff}
\DeclareMathOperator{\simp}{\bigtriangleup}
\DeclareMathOperator{\opim}{im}
\DeclareMathOperator{\BAP}{BAP_m}
\DeclareMathOperator{\graph}{Graph}
\newcommand{\A}{\mathcal{A}}
\newcommand{\E}{\mathcal{E}}
\newcommand{\M}{\mathcal{M}}
\newcommand{\G}{\mathscr{G}}
\newcommand{\Dxy}{\mathfrak{D}}
\newcommand{\bad}{\mathfrak{B}}
\newcommand{\CDxy}{\overline{\Dxy}}
\newcommand{\Dom}{\mathcal{D}}
\newcommand{\N}{\mathbb{N}}
\newcommand{\Z}{\mathbb{Z}}
\newcommand{\Sphere}{\mathbb{S}}
\newcommand{\Cone}{\mathbb{C}}
\newcommand{\Ball}{\mathbb{B}}
\newcommand{\CBall}{\overline{\mathbb{B}}}
\newcommand{\Disc}{\mathbb{D}}
\newcommand{\CDisc}{\overline{\mathbb{D}}}
\newcommand{\Shell}{\mathbb{A}}
\newcommand{\R}{\mathbb{R}}
\newcommand{\HM}{\mathcal{H}}
\newcommand{\HD}{d_{\mathcal{H}}}
\newcommand{\dgras}{d_{\mathrm{Gr}}}
\newcommand{\K}{\mathcal{K}}
\newcommand{\Kphi}{\K_{l,\varphi}}
\newcommand{\face}{\mathfrak{fc}}
\newcommand{\height}{\mathfrak{h}}
\newcommand{\hmin}{\height_{\min}}
\newcommand{\Slk}{\mathscr{S}}
\newcommand{\red}{\rho\varepsilon}
\newcommand{\ere}{\boldsymbol{\epsilon}_{\red}}
\newcommand{\laka}{\lambda\kappa}
\newcommand{\fin}{\mathrm{g}}
\newcommand{\proj}{\pi}
\newcommand{\pperp}{\pi^{\perp}}
\newcommand{\mat}[1]{\mathbf{#1}}
\newcommand{\nbeta}{\beta_m^{\Sigma}}
\newcommand{\ntheta}{\theta_m^{\Sigma}}
\newcommand{\ahl}{\mathrm{Ahl}}
\title[Sobolev embedding using Menger curvature]
{Geometric Sobolev-like embedding using high-dimensional Menger-like curvature}
\author{S{\l}awomir Kolasi{\'n}ski}
\address{Institute of~Mathematics\\
  University of~Warsaw\\
  Banacha~2, 02-097 Warsaw\\
  Poland}
\email{s.kolasinski@mimuw.edu.pl}
\urladdr{http://www.mimuw.edu.pl/~skola}
\keywords{Menger curvature, Ahlfors regularity, repulsive potentials, regularity theory}
\subjclass{Primary: 49Q10; Secondary: 28A75, 49Q20, 49Q15}
\date{\today}
\begin{document}

\begin{abstract}
  We study a~modified version of~Lerman-Whitehouse Menger-like curvature defined
  for $(m+2)$ points in~an $n$-dimensional Euclidean space. For $1 \le l \le
  m+2$ and an~$m$-dimensional set $\Sigma \subset R^n$ we also introduce global
  versions of~this discrete curvature, by taking supremum with respect to
  $(m+2-l)$ points on~$\Sigma$. We then define geometric curvature energies
  by~integrating one of~the~global Menger-like curvatures, raised to a~certain
  power~$p$, over all $l$-tuples of~points on~$\Sigma$. Next, we~prove that
  if~$\Sigma$ is compact and $m$-Ahlfors regular and if $p$ is greater than
  the~dimension of~the~set of~all $l$-tuples of~points on~$\Sigma$ (i.e.~$p >
  ml$), then the P.~Jones' $\beta$-numbers of~$\Sigma$ must decay as~$r^{\tau}$
  with $r \to 0$ for some $\tau \in (0,1)$. If~$\Sigma$ is an~immersed $C^1$
  manifold or a~bilipschitz image of~such set then it~follows that it~is
  Reifenberg flat with vanishing constant, hence (by~a~theorem of~David, Kenig
  and~Toro) an~embedded $C^{1,\tau}$ manifold. We~also define a~wide class of
  other~sets for which this assertion is~true. After~that, we~bootstrap the
  exponent $\tau$ to the~optimal one~$\alpha = 1 - ml/p$ showing an~analogue
  of~the~Morrey-Sobolev embedding theorem $W^{2,p} \subseteq
  C^{1,\alpha}$. Moreover, we obtain a~qualitative control over the~local graph
  representations of~$\Sigma$ only in~terms~of~the~energy.
\end{abstract}

\maketitle

\section*{Introduction}

Menger curvature is defined for three points $x_0$, $x_1$, $x_2$ in $\R^n$ as
follows
\begin{displaymath}
  \mathbf{c}(x_0,x_1,x_2) = \frac{4 \HM^2(\simp(x_0,x_1,x_2))}{|x_0 - x_1| |x_1 - x_2| |x_2 - x_0|} \,,
\end{displaymath}
where $\HM^l$ denotes the $l$-dimensional Hausdorff measure and
$\simp(x_0,\ldots,x_l)$ is the convex hull of the set
$\{x_0,\ldots,x_l\}$. Using the sine theorem one easily sees that
$\mathbf{c}(x_0,x_1,x_2)$ is just the inverse of~the radius of~the circumcircle
of~$\simp(x_0,x_1,x_2)$. Let $\gamma \subseteq \R^3$ be a~closed, Lipschitz
curve with arc-length parameterization~$\Gamma$, i.e. $\Gamma : S_L \to \R^3$ is
such that $\gamma = \Gamma(S_L)$ and $|\Gamma'| = 1$ a.e. - here $S_L = \R/L\Z$
denotes the circle of~length $L$. We set
\begin{displaymath}
  \mathbf{c}_0[\gamma] = \sup_{x_0,x_1,x_2 \in \gamma} \mathbf{c}(x_0,x_1,x_2) \,,
  \quad
  \mathbf{c}_1[\gamma](x_0) = \sup_{x_1,x_2 \in \gamma} \mathbf{c}(x_0,x_1,x_2) \,,
\end{displaymath}
\begin{displaymath}
  \mathbf{c}_2[\gamma](x_0,x_1) = \sup_{x_2 \in \gamma} \mathbf{c}(x_0,x_1,x_2)
  \quad \text{and} \quad
  \mathbf{c}_3[\gamma](x_0,x_1,x_2) = \mathbf{c}(x_0,x_1,x_2) \,.
\end{displaymath}
Using these quantities we define
\begin{displaymath}
  \Delta[\gamma] = \mathbf{c}_0[\gamma]^{-1}
  \quad \text{and for $i = 1,2,3$} \quad
  \M_p^i(\gamma) = \int_{(\gamma)^i} \mathbf{c}_i^p[\gamma]\ d\HM^i \,,
\end{displaymath}
where $(\gamma)^i$ is the Cartesian product of~$i$~copies of~$\gamma$. Gonzalez
and~Maddocks~\cite{MR1692638} suggested that these functionals can serve
as~\emph{knot energies}, i.e.~energies which separate knot types by infinite
energy barriers. Gonzalez, Maddocks, Schuricht and von
der~Mosel~\cite{MR1883599} showed that whenever $\mathbf{c}_0[\gamma] < \infty$
then $\gamma$ is an~embedded (without self-intersections) manifold of class
$C^{1,1} = W^{2,\infty}$. The functionals $\M_p^1$, $\M_p^2$ and $\M_p^3$ poses
a~similar property. For $i = 1,2,3$ if~$\M_p^i(\gamma) < \infty$ for some $p >
i$ then $\gamma$ is an~embedded manifold of~class $C^{1,1-i/p}$ (see the
articles by~Strzelecki, Szuma{\'n}ska and von
der~Mosel~\cite{MR2489022,MR2668877} and by~Strzelecki and~von
der~Mosel~\cite{MR2318572}). Furthermore, in~\cite{MR2318572} the authors proved
that $\M_p^1(\gamma)$ is finite \emph{if and only if} $\gamma$ is an image of
a~$W^{2,p}$ function. Later Blatt~\cite{blatt-note} showed that for $i = 2,3$
and $p > i$ the energy $\M_p^i(\gamma) < \infty$ \emph{if and only if} $\gamma$
belongs to the Sobolev-Slobodeckij space $W^{1+s,p}$, where $s = 1 -
\frac{i-1}p$. Note that, $W^{1+s,p}(\R) \subseteq C^{1,1-i/p}(\R)$ whenever $p >
i$, so these results deliver geometric counterparts of~the Sobolev-Morrey
embedding.

For $p$ below the critical level (i.e. $p < i$) one cannot expect that
finiteness of $\M_p^i(\gamma)$ implies smoothness. Scholtes~\cite{1202.0504}
showed that if $\gamma$ is a polygon in $\R^2$ then $\M_p^i(\gamma) < \infty$ if
and only~if $p < i$. For a~$1$-dimensional Borel set $E \subseteq \R^2$ a~famous
result of~David and~L{\'e}ger~\cite{MR1709304} says that $\M_2^3(E)$ is finite
if and only if $E$ is~rectifiable. This was a~crucial step in the proof
of~Vitushkin's conjecture characterizing removable sets $E$ for bounded analytic
functions.

There are some generalizations of these results to higher dimensions. Lerman
and~Whitehouse~\cite{MR2558685,0805.1425} suggested a~few possible definitions
of~discrete curvatures of Menger-type. They used these curvatures
to~characterize uniformly rectifiable measures in the sense of David
and~Semmes~\cite{MR1251061}. In~this article we use a~modified version (having
different scaling) of one of~the~quantities introduced in~\cite{MR2558685}.

Our research has been motivated directly by the work of Strzelecki and von
der~Mosel~\cite{0911.2095}, where the authors work with $2$-dimensional surfaces
in~$\R^3$. They define the discrete curvature of~four points $x_0,x_1,x_2,x_3 \in
\R^3$ by the formula
\begin{displaymath}
  \K_{SvdM}(x_0,x_1,x_2,x_3) =
  \frac{\HM^3(\simp(x_0,x_1,x_2,x_3))}{\HM^2(\partial \simp(x_0,x_1,x_2,x_3)) \diam(x_0,x_1,x_2,x_3)^2} \,.
\end{displaymath}
For $\Sigma \subseteq \R^3$ a~compact, closed, connected, Lipschitz surface they
also define
\begin{displaymath}
  \M^{SvdM}_p(\Sigma) = \int_{\Sigma} \int_{\Sigma} \int_{\Sigma} \int_{\Sigma}
  \K_{SvdM}(x_0,x_1,x_2,x_3)^p
  \ d\HM^2_{x_0}\ d\HM^2_{x_1}\ d\HM^2_{x_2}\ d\HM^2_{x_3} \,.
\end{displaymath}
In~\cite{0911.2095} the authors prove that if~$\M^{SvdM}_p(\Sigma) \le E <
\infty$ for some $p > 8 = \dim(\Sigma^4)$, then $\Sigma$ has~to~be an~embedded
manifold of~class~$C^{1,1-8/p}$ with local graph representations whose domain
size is controlled solely in terms if $E$ and~$p$. This additional control of
the graph representations allowed them to prove~\cite[Theorem~1.5]{0911.2095}
that any sequence $(\Sigma_j)_{j \in \N}$ of~compact, closed, connected,
Lipschitz surfaces containing the origin and with uniformly bounded measure and
energy, i.e. $\M^{SvdM}_p(\Sigma_j) \le E$ and $\HM^2(\Sigma_j) \le A$ for each
$j \in \N$, contains a~subsequence~$\Sigma_{j_l}$, which converges in $C^1$
topology to some $C^{1,1-8/p}$ compact, closed, connected manifold. This in turn
allowed them to solve some variational problems with topological constraints
(see~\cite[Theorems~1.6 and~1.7]{0911.2095}).

Similar regularity results were also obtained by Strzelecki and von
der~Mosel~\cite{1102.3642} for yet another energy
\begin{displaymath}
  \E_p^{tp}(\Sigma) = \int_{\Sigma} \int_{\Sigma} R_{tp}(x,y)^{-p}\ d\HM^m_x\ d\HM^m_y \,,
  \quad \text{where} \quad
  R_{tp}(x,y) = \frac{|x-y|^2}{2\dist(y-x,T_x\Sigma)}
\end{displaymath}
and $T_x\Sigma$ is the tangent space to $\Sigma$ at $x$. The quantity
$R_{tp}(x,y)$ is called \emph{the tangent-point radius}, because it measures
the~radius of the sphere tangent to~$\Sigma$ at~$x$ and passing through~$y$.  If
$\Sigma$ is a~closed, connected, Lipschitz surface with $\E_p^{tp}(\Sigma) <
\infty$ for some $p > 2m$, then $\Sigma \in C^{1,1-(2m)/p}$.

In this paper we define energy functionals for $m$-dimensional subsets $\Sigma$
of~$\R^n$ (we always assume $m \le n$) and we study regularity of sets with
finite energy. For $m+2$ points $x_0,\ldots,x_{m+1}$ in $\R^n$ we set
(cf. \cite[\S6.1.1]{MR2558685})
\begin{displaymath}
  \K(x_0, \ldots, x_{m+1}) = \frac{\HM^{m+1}(\simp(x_0, \ldots, x_{m+1}))}
  {\diam(x_0, \ldots, x_{m+1})^{m+2}}
\end{displaymath}
and for $p > 0$ and $l = 1,2,\ldots,m+2$ we define\footnote{If $l = m+2$ there
  are $m+2$ integrals and no supremum.}
\begin{displaymath}
  \E_p^l(\Sigma) = \int_{\Sigma^l} \sup_{x_l, \ldots x_{m+1} \in \Sigma}
  \K(x_0,\ldots, x_{m+1})^p\ d \HM^{ml}_{x_0, \ldots, x_{l-1}} \,.
\end{displaymath}
We prove that these functionals can be called \emph{geometric curvature
  energies}, i.e.~for~sets $\Sigma$ of~relatively little smoothness, finiteness
of~the~energy guarantees both embeddedness and higher regularity.

Of~course, the~condition $\E_p^l(\Sigma) < \infty$ cannot guarantee
that~$\Sigma$ is a~manifold (even for large~$p$) just for any $m$-dimensional
set~$\Sigma$. The main issue is that $\E_p^l(\Sigma \setminus A) \le
\E_p^l(\Sigma)$ for any set $A$, so creating holes in $\Sigma$ decreases
the~energy. Hence, we need to work with a restricted class of~sets. We~say that
$\Sigma$ is \emph{locally lower Ahlfors regular}~if
\begin{equation}
  \label{def:ahlfors}
  \tag{\texttt{Ahl}}
  \exists R_{\ahl} > 0 \ 
  \exists A_{\ahl} > 0 \ 
  \forall x \in \Sigma \ 
  \forall r \le R_{\ahl}
  \quad
  \HM^m(\Sigma \cap \Ball(x,r)) \ge A_{\ahl} r^m \,.
\end{equation}
Here $\Ball(x,r)$ denotes the $n$-dimensional open ball of~radius~$r$ centered
at~$x$. We~also need a~variant of~the P.~Jones' \emph{beta} numbers introduced
in~\cite{MR1103619} and the~\emph{bilateral beta} numbers, which originated from
Reifenberg's work~\cite{MR0114145} and his famous topological disc theorem
(see~\cite{Sim96} for a~modern proof). We define
\begin{displaymath}
  \nbeta(x,r) = \frac 1r \inf_{H \in G(n,m)} \sup_{z \in \Sigma \cap \CBall(x,r)} \dist(z,x+H)
\end{displaymath}
\begin{displaymath}
  \text{and} \quad
  \ntheta(x,r) = \frac 1r \inf_{H \in G(n,m)} \HD(\Sigma \cap \CBall(x,r), (x + H) \cap \CBall(x,r)) \,,
\end{displaymath}
\begin{displaymath}
  \text{where} \quad
  \HD(E,F) = \sup_{y \in E} \dist(y,F) + \sup_{y \in F} \dist(y,E)
\end{displaymath}
is the Hausdorff distance and $G(n,m)$ denotes the Grassmannian
of~$m$-dimensional linear subspaces of~$\R^n$. The $\beta$-number measures
the~flatness of~$\Sigma$ in~a~given scale in a~scaling invariant way.
The~$\theta$-number measures additionally the size of holes in~that scale.
Using these notions we can formulate our first
\begin{introprop}
  \label{prop:beta-est}
  Let $\Sigma \subseteq \R^n$ be a~compact set satisfying \eqref{def:ahlfors}
  and let $l \in \{1, \ldots, m+2\}$. If~$\E_p^l(\Sigma) \le E < \infty$ for
  some $p > ml$, then there exists a constant $C_A = C_A(m,l,p)$ such that
  \begin{displaymath}
    \forall r \le R_{\ahl} \ 
    \forall x \in \Sigma
    \quad
    \nbeta(x,r) \le C_A \left( \frac{E}{A_{\ahl}^l} \right)^{\frac 1\kappa} r^{\frac{\lambda}{\kappa}} \,,
  \end{displaymath}
  where $\kappa = (p+ml)(m+1)$ and~$\lambda = p - ml$.
\end{introprop}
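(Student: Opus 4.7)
The plan is to argue by contradiction: fix $x \in \Sigma$ and $r \le R_{\ahl}$, set $\beta = \nbeta(x,r)$, and show that a sufficiently large $\beta$ forces $\E_p^l(\Sigma) > E$. The geometric mechanism is that failure of flatness at scale $r$ forces some $(m+2)$-tuple from $\Sigma$ to span a simplex of quantitatively large Menger-type curvature~$\K$, and Ahlfors regularity~\eqref{def:ahlfors} promotes this single configuration into a positive-measure family of tuples all contributing to $\E_p^l$.

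\smallskip
First, by compactness of $G(n,m)$ and of $\Sigma \cap \CBall(x,r)$, fix a plane $H^{*}\in G(n,m)$ attaining the infimum in the definition of $\nbeta$ together with a witness $y \in \Sigma \cap \CBall(x,r)$ satisfying $\dist(y,\,x+H^{*})=\beta r$. Now pick an auxiliary scale $\rho r$ (where $\rho$ is a suitable small positive power of~$\beta$, to be fixed at the end) and use~\eqref{def:ahlfors} inductively to select $m+1$ base points $p_0,\dots,p_m\in\Sigma\cap\CBall(x,r)$ in pairwise disjoint $\rho r$-subballs, arranged so that at step $k$ the new vertex $p_k$ is at a quantitatively controlled distance from $\aff(p_0,\dots,p_{k-1})$. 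A tilt estimate, using that every point of $\Sigma \cap \CBall(x,r)$ lies within $\beta r$ of $x+H^{*}$, then shows that $\dist(y,\aff(p_0,\dots,p_m))\gtrsim \beta r$, and together with $\diam\{p_0,\dots,p_m,y\}\lesssim r$ and the lower bound on $\HM^m(\simp(p_0,\dots,p_m))$ this produces a configuration with $\K(p_0,\dots,p_m,y)\gtrsim c(m)\,\beta^{a}/r$ for some positive exponent~$a=a(m)$.

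\smallskip
I then fatten the configuration: each vertex is replaced by an arbitrary point of $\Sigma$ inside the corresponding $\rho r$-subball. A perturbation argument shows that for $\rho$ a suitably small power of $\beta$ the perturbed simplex still satisfies $\K\gtrsim\beta^{a}/r$, while~\eqref{def:ahlfors} guarantees each subball carries $\HM^m$-mass at least $A_{\ahl}(\rho r)^m$. Exploiting the $\sup$-structure of $\E_p^l$, I assign $m+2-l$ of the fattened vertices as the supremum variables and integrate the remaining $l$ against $\HM^{ml}$, obtaining a lower bound
\begin{displaymath}
  E \ \ge\ \E_p^l(\Sigma)\ \gtrsim\ \bigl(\beta^{a}/r\bigr)^{p}\,A_{\ahl}^{\,l}\,(\rho r)^{ml}.
\end{displaymath}
Tracking the optimal choice of $\rho$ as a power of $\beta$ compatible with the perturbation estimate and rearranging produces $\beta^\kappa \lesssim (E/A_{\ahl}^{\,l})\,r^{\,\lambda}$ with $\kappa=(p+ml)(m+1)$ and $\lambda=p-ml$, which is the asserted inequality.

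\smallskip
The chief technical obstacle is the base-selection step: under the sole hypothesis~\eqref{def:ahlfors} one must guarantee simultaneously that the $m+1$ base vertices lie in $\Sigma$, that they span a quantitatively non-degenerate $m$-simplex, and that the resulting base plane tilts so little against $x+H^{*}$ that the apex height $\beta r$ survives. This forces an inductive pigeonhole on pairwise disjoint Ahlfors-regular subballs together with a dimension-counting step to avoid degenerate configurations; it is the accumulation of constants across the $m+1$ base vertices that produces the extra multiplicative factor $(m+1)$ in the exponent~$\kappa$, explaining why this preliminary bound is weaker than the eventually optimal one referenced in the abstract.
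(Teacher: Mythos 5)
Your overall mechanism (perturb a single bad configuration into a positive-measure family of tuples, integrate $l$ of the vertices over Ahlfors-regular subballs, take suprema over the rest, and balance the perturbation radius $\rho\sim\beta^{m+1}$ against the curvature lower bound) is exactly the paper's Proposition~\ref{prop:eta-d-balance}, and your exponent bookkeeping for $\kappa$ and $\lambda$ is right. But the step you yourself flag as ``the chief technical obstacle'' --- producing the configuration in the first place --- is a genuine gap, not a technicality. Under \eqref{def:ahlfors} \emph{alone} you cannot select $m+1$ points of $\Sigma\cap\CBall(x,r)$ spanning a quantitatively non-degenerate $m$-simplex: lower Ahlfors regularity gives no upper mass bounds, so it places no obstruction to $\Sigma\cap\CBall(x,r)$ lying entirely in a thin slab around an $(m-1)$-plane (e.g.\ a solid thin cylinder $[0,1]\times\overline{\Ball}_\delta^{\,2}\subset\R^3$ has infinite $\HM^2$-measure in every ball, hence satisfies \eqref{def:ahlfors} for $m=2$ with any constant, yet every triangle with vertices in it has $\hmin\lesssim\delta$). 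No ``inductive pigeonhole plus dimension counting'' can rescue this; the energy hypothesis would have to enter the base selection itself. A second, independent defect: even granting a good base, your tilt estimate $\dist(y,\aff(p_0,\dots,p_m))\gtrsim\beta r$ for the \emph{fixed} witness $y$ of $\nbeta(x,r)$ is false --- all the $p_i$ and $y$ may sit at height exactly $\beta r$ on the same side of $x+H^*$, so the base plane can pass through $y$.

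The paper sidesteps both problems with an extremal device you should adopt: take $\mathbf{T}=\simp(x_0,\dots,x_{m+1})$ of \emph{maximal} $\HM^{m+1}$-measure among all $(m+1)$-simplices with vertices in the compact set $\Sigma\cap\CBall(x,r)$. Maximality forces every point of $\Sigma\cap\CBall(x,r)$ to lie within $\hmin(\mathbf{T})$ of the affine plane of the largest $m$-face (otherwise one could swap in a higher apex), whence $\nbeta(x,r)\le 2\hmin(\mathbf{T})/r$; and $\mathbf{T}$ is automatically $(\hmin(\mathbf{T})/(2r),\,2r)$-voluminous, so your energy estimate (the paper's Proposition~\ref{prop:eta-d-balance}) bounds $\hmin(\mathbf{T})/(2r)$ by $(E/A_{\ahl}^l)^{1/\kappa}(2r)^{\lambda/\kappa}$. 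No non-degenerate base ever needs to be constructed --- if the maximal simplex is flat, that is already the conclusion --- and no tilt estimate is needed. With that replacement for your first two paragraphs, the rest of your argument goes through.
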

Applying the result of David, Kenig and~Toro~\cite[Proposition~9.1]{MR1808649}
(cf. Proposition~\ref{prop:dkt-reg}) we then obtain
\begin{introthm}
  \label{thm:regularity}
  Let $\Sigma \subseteq \R^n$ be a~compact set satisfying \eqref{def:ahlfors}
  and such that
  \begin{equation}
    \tag{$\theta\lesssim\beta$}
    \label{def:theta-beta}
    \exists R_{\theta\beta} > 0 \ 
    \exists M_{\theta\beta} > 1 \ 
    \forall x \in \Sigma \ 
    \forall r \le R_{\theta\beta}
    \quad
    \ntheta(x,r) \le M_{\theta\beta} \nbeta(x,r) \,.
  \end{equation}
  If~$\E_p^l(\Sigma) < \infty$ for some $p > ml$, then $\Sigma$ is a~closed,
  embedded manifold of~class $C^{1,\lambda/\kappa}$.
\end{introthm}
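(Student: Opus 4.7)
The plan is to combine Proposition~\ref{prop:beta-est} with the hypothesis \eqref{def:theta-beta} to upgrade the bilateral $\theta$-numbers to polynomially decaying quantities, and then feed the resulting bilateral flatness data into the regularity criterion of David--Kenig--Toro referenced as Proposition~\ref{prop:dkt-reg}. In other words, the whole argument is a matching of hypotheses: almost no new geometric work is needed once Proposition~\ref{prop:beta-est} is in hand.

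In more detail, first I would invoke Proposition~\ref{prop:beta-est}: since $\Sigma$ is compact and \eqref{def:ahlfors} holds, and since $\E_p^l(\Sigma) \le E < \infty$ with $p > ml$, we obtain a uniform bound
\begin{displaymath}
  \nbeta(x,r) \le C_A \left( \frac{E}{A_{\ahl}^l} \right)^{1/\kappa} r^{\lambda/\kappa}
  \quad \text{for all } x \in \Sigma, \ r \le R_{\ahl},
\end{displaymath}
where $\kappa$ and $\lambda$ are as in Proposition~\ref{prop:beta-est}. Next I would combine this with \eqref{def:theta-beta}: on the common range of scales $r \le \min\{R_{\ahl}, R_{\theta\beta}\}$ we get
\begin{displaymath}
  \ntheta(x,r) \le M_{\theta\beta} \nbeta(x,r) \le C' r^{\lambda/\kappa} \,,
\end{displaymath}
which is the quantitative Reifenberg-flatness-with-power-decay condition; in particular $\ntheta(x,r) \to 0$ uniformly in $x$ as $r \to 0$, so $\Sigma$ is Reifenberg flat with vanishing constant.

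At this point I would apply the David--Kenig--Toro regularity criterion (Proposition~\ref{prop:dkt-reg}): uniform bilateral flatness decaying like $r^{\tau}$ with $\tau = \lambda/\kappa \in (0,1)$ implies that $\Sigma$ is locally the graph of a $C^{1,\tau}$ function over its approximating tangent plane, with H\"older constant controlled by $C'$. Since $\Sigma$ is compact, a finite cover by such graph patches turns it into a closed embedded $C^{1,\lambda/\kappa}$ submanifold of $\R^n$.

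The main obstacle, and really the only nontrivial point, is bookkeeping on the scale range: Proposition~\ref{prop:beta-est} only delivers the $\beta$-estimate for $r \le R_{\ahl}$ and \eqref{def:theta-beta} only for $r \le R_{\theta\beta}$, so I would have to argue that this is enough to feed into the DKT criterion, which typically demands a uniform smallness assumption at all scales $r \le r_0$ for some $r_0$. This is handled by taking $r_0 = \min\{R_{\ahl}, R_{\theta\beta}\}$ and possibly shrinking it further so that $C'r_0^{\lambda/\kappa}$ lies below the universal threshold required by Proposition~\ref{prop:dkt-reg}; compactness of $\Sigma$ ensures this is done in finitely many coordinate patches, and no further largeness assumption on $r$ is needed because the conclusion of Proposition~\ref{prop:dkt-reg} is local.
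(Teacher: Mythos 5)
Your proposal is correct and follows exactly the paper's own argument: Proposition~\ref{prop:beta-est} gives the power decay of $\nbeta$, condition \eqref{def:theta-beta} transfers this to $\ntheta$ so that $\Sigma$ is Reifenberg flat with vanishing constant, and Proposition~\ref{prop:dkt-reg} then yields the $C^{1,\lambda/\kappa}$ conclusion. The scale-range bookkeeping you flag is indeed the only (minor) point to check, and your resolution of it is the standard one.
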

This motivates the following
\begin{introdefin}
  \label{def:fine-sets}
  We say that a~set $\Sigma \subseteq \R^n$ is an~\emph{$m$-fine set} if it~is
  $m$-dimensional, compact and~satisfies \eqref{def:ahlfors}
  and~\eqref{def:theta-beta}.
\end{introdefin}

Examples of $m$-fine sets include closed $m$-dimensional Lipschitz submanifolds
of~$\R^n$ and also images of~maps $\varphi : M \to \R^n$, where $M$ is
an~abstract, closed $C^1$~manifold and $\varphi$ is an~immersion. Other examples
are~described in~Section~\ref{sec:fine-sets}.

The condition~\eqref{def:theta-beta} is purely geometric but it is hard to
understand what kind of behavior it implies. It gives control over the size
of~holes in~$\Sigma$ but it does not imply that the~topological boundary
of~$\Sigma$ is~empty. In~\cite[Definition~2.9]{1102.3642}
(cf. Definition~\ref{def:adm}) the authors considered a~class
of~\emph{admissible sets} satisfying a~different set of~conditions. Their idea
was to use the topological linking number to~prevent holes in~$\Sigma$.
Any~admissible set in the~sense of~\cite{1102.3642} with finite $\E_p^l$-energy
for some $p > ml$, satisfies the~\eqref{def:theta-beta} condition
(see~\cite[Theorem~4.15]{slawek-phd} for the case $l = m+2$), hence, by
Theorem~\ref{thm:regularity}, it is a~closed $C^{1,\lambda/\kappa}$-manifold.

Once we have estimates on the $\beta$-numbers (Proposition~\ref{prop:beta-est}),
the~regularity result (Theorem~\ref{thm:regularity}) follows quite easily but
the~key point is that one can get a~uniform (not depending on $\Sigma$) control
over the local graph representations of~$\Sigma$ only in terms of the energy
bound $E$ and the parameters $m$, $l$ and~$p$. To show that this is true
we~first prove the~following uniform, with respect to $\Sigma$, estimate
on~the~local lower Ahlfors regularity of~$\Sigma$.
\begin{introthm}
  \label{thm:uahlreg}
  Let $\Sigma \subseteq \R^n$ be an $m$-fine set. If $\E_p^l(\Sigma) \le E <
  \infty$ for some $p > ml$, then
  \begin{displaymath}
    \exists R_0 = R_0(E,m,l,p) > 0 \ 
    \forall x \in \Sigma \ 
    \forall r \le R_0
    \quad
    \HM^m(\Sigma \cap \Ball(x,r)) \ge \left( \tfrac{\sqrt{15}}{4} \right)^m \omega_m r^m \,,
  \end{displaymath}
  where $\omega_m = \HM^m(\Ball(0,1) \cap \R^m)$ is the measure of the unit ball
  in $\R^m$.
\end{introthm}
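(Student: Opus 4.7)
\emph{Plan.} The constant $\sqrt{15}/4 = \sqrt{1-(1/4)^2}$ is precisely the factor lost under orthogonal projection of an $n$-ball onto an $m$-plane when the deviation from that plane is at most a quarter of the radius. My proof therefore proceeds in three steps: first, use the energy bound together with the $m$-fine hypotheses to force $\ntheta(x,r) \le 1/4$ at small scales; second, convert this flatness into the desired measure lower bound via a $1$-Lipschitz projection combined with a degree argument on the manifold $\Sigma$; and third, bootstrap to eliminate the dependence of the final scale on the $\Sigma$-specific constants $A_{\ahl}, R_{\ahl}, M_{\theta\beta}, R_{\theta\beta}$.

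\emph{Geometric core from $\ntheta$-smallness.} Applying Proposition~\ref{prop:beta-est} and then \eqref{def:theta-beta} gives, for all $r \le \min(R_{\ahl}, R_{\theta\beta})$,
\[
\ntheta(x,r) \le M_{\theta\beta}\, C_A \bigl(E/A_{\ahl}^l\bigr)^{1/\kappa} r^{\lambda/\kappa},
\]
so $\ntheta(x,r) \le 1/4$ for $r$ below some scale $r_1 > 0$, a priori depending on $\Sigma$. For such $r$ fix an $H \in G(n,m)$ realizing the infimum in $\ntheta(x,r)$, and let $\proj$ denote orthogonal projection onto $x+H$. Since $\proj$ is $1$-Lipschitz, $\HM^m(\Sigma \cap \Ball(x,r)) \ge \HM^m\bigl(\proj(\Sigma \cap \Ball(x,r))\bigr)$. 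For any $y \in \Sigma$ with $|y-x|=r$, the $\beta$-part of the Hausdorff bound gives $\dist(y,x+H) \le r/4$, and hence by Pythagoras $|\proj y - x| \ge r\sqrt{15}/4$; so the image of $\Sigma \cap \partial \Ball(x,r)$ under $\proj$ avoids the $m$-ball $(x+H) \cap \Ball(x, r\sqrt{15}/4)$. Since $\Sigma$ is a closed embedded $C^{1,\lambda/\kappa}$-manifold by Theorem~\ref{thm:regularity}, a standard degree argument (first for a generic $r' \approx r$ at which $\partial \Ball(x,r')$ is transverse to $\Sigma$, then passing to the limit) shows that every point of this $m$-ball lies in $\proj(\Sigma \cap \Ball(x,r))$, yielding $\HM^m(\Sigma \cap \Ball(x,r)) \ge (\sqrt{15}/4)^m \omega_m r^m$ for all $r \le r_1$.

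\emph{Bootstrap to a universal scale.} The previous step gives an Ahlfors-type bound with universal constant $A_0 := (\sqrt{15}/4)^m \omega_m$, valid on the $\Sigma$-dependent range $r \le r_1$. Substituting $A_0$ for $A_{\ahl}$ in a local variant of Proposition~\ref{prop:beta-est} (requiring the Ahlfors hypothesis only on the range of scales used) produces
\[
\nbeta(x,r) \le C_A (E/A_0^l)^{1/\kappa} r^{\lambda/\kappa}, \qquad r \le r_1,
\]
whose right-hand side depends only on $E, m, l, p$. To pass from $\nbeta$ to $\ntheta$ with a universal constant---independent of $M_{\theta\beta}$---I use the $C^{1,\lambda/\kappa}$-graph representations guaranteed by Theorem~\ref{thm:regularity}: once the $\beta$-numbers are small in a universal way, $\Sigma$ is locally a small-gradient graph over its tangent plane with modulus controlled by $E$, so there are no geometric holes at the relevant scales and $\ntheta \le c\, \nbeta$ with a universal $c$. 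Solving $\ntheta \le 1/4$ then produces the required $R_0 = R_0(E,m,l,p)$, and rerunning the geometric core on $r \le R_0$ concludes.

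\emph{Main obstacle.} The first two steps are essentially direct consequences of results already proved. The genuine difficulty is the bootstrap: we must verify rigorously that the $C^{1,\lambda/\kappa}$-structure of $\Sigma$ furnished by Theorem~\ref{thm:regularity} is quantitatively controlled by $E$ alone, so that both the local reapplication of Proposition~\ref{prop:beta-est} on the interval $(0, r_1]$ and the universal comparison $\ntheta \lesssim \nbeta$ hold without inheriting the set-dependent constants $M_{\theta\beta}$, $R_{\theta\beta}$, $A_{\ahl}$, $R_{\ahl}$.
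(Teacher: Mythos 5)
Your geometric core is sound and matches the paper's: the constant $(\sqrt{15}/4)^m\omega_m$ arises exactly as you say, from projecting onto a plane within distance $r/4$ (the paper gets it as $(1-\delta^2)^{m/2}\omega_m$ with $\delta=\tfrac14$ in Corollary~\ref{cor:uahlreg}). But there are two genuine gaps. The smaller one is in your degree argument: for a closed embedded manifold, $\Sigma\cap\Ball(x,r)$ may consist of several nearly parallel sheets over $x+H$, and the mod-$2$ degree of $\proj$ on the \emph{whole} intersection can then be $0$ even though each sheet covers the small disc; you must either restrict to a suitable component or, as the paper does, certify nontriviality of the degree via the linking condition $\oplk(\Sigma,\Slk_x)=1$ (Example~\ref{ex:mfld}, Propositions~\ref{prop:lk-htp-inv}--\ref{prop:intpoint}). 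The entire cone-and-homotopy apparatus of Section~\ref{sec:adm-sets} exists to make precisely this step rigorous, so "a standard degree argument" is hiding real work.

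The fatal gap is the bootstrap. Every improved estimate you derive — the $\nbeta$ bound with $A_0$ in place of $A_{\ahl}$, the universal comparison $\ntheta\le c\,\nbeta$ — is established only on the $\Sigma$-dependent range $r\le r_1$. "Solving $\ntheta\le\tfrac14$" then yields the conclusion only for $r\le\min(\bar R(E,m,l,p),\,r_1)$; if $r_1<\bar R$, nothing in your argument covers the scales $r\in(r_1,\bar R]$, so the final radius is still $\Sigma$-dependent. Sharpening constants on a fixed range does not enlarge the range; you need a mechanism that converts "the good range ends at scale $d$" into a quantitative energy cost at scale $d$. This is exactly what the paper's proof supplies: the stopping-time construction of Proposition~\ref{prop:big-proj-fat-simp} produces, at the first scale $d(x_0)$ where the iterative flatness fails, an $(\eta_0,d(x_0))$-voluminous simplex with $\eta_0=\eta_0(\delta,m)$ \emph{universal}, and Proposition~\ref{prop:eta-d-balance}, applied with the universal Ahlfors constant $(\sqrt{15}/4)^m\omega_m$ valid at the vertices (at scales $\varsigma_{m+1}d(x_0)\le\tfrac12 d(x_i)$), forces $E\ge C(m,l,p)\,d(x_0)^{ml-p}$, i.e.\ $d(x_0)\ge C\,E^{-1/\lambda}$. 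Your proposal contains no analogue of this "curvature appears at the threshold scale" step. (Relatedly, your claim that the graph representations are "controlled by $E$ alone" is the content of Section~\ref{sec:tangent-planes}, which is proved \emph{using} Theorem~\ref{thm:uahlreg} and Proposition~\ref{prop:abs-M-theta}, so invoking it here risks circularity.)
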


Theorem~\ref{thm:regularity} together with Theorem~\ref{thm:uahlreg} give
us~estimates on the $\beta$-numbers independent of~$\Sigma$. Knowing that
$\Sigma$ is a~compact, closed, $C^{1,\lambda/\kappa}$-submanifold of~$\R^n$,
we~prove that also the constant $M_{\theta\beta}$ from
the~\eqref{def:theta-beta} condition can be replaced by an~absolute
constant. Then we~obtain estimates on~the~oscillation of~tangent planes
of~$\Sigma$ solely in~terms~of $E$, $m$, $l$ and~$p$. This allows to prove that
the~size of~a~single patch of~$\Sigma$ representable as~a~graph of~some function
is controlled solely in~terms~of $E$, $m$, $l$ and~$p$. Next we bootstrap the
exponent $\frac{\lambda}{\kappa}$ to the~optimal one $\alpha = 1 - \frac{ml}p$
(see~\cite{SKMS} and~\cite{blatt-kola} for the~proof that this is~indeed
optimal).
\begin{introthm}
  \label{thm:optimal-reg}
  Let $\Sigma \subseteq \R^n$ be an $m$~fine set. If $\E_p^l(\Sigma) \le E <
  \infty$ for some $p > ml$, then $\Sigma$ is a~closed
  $C^{1,\alpha}$-manifold. Moreover, there exist two constants $R_{\fin} =
  R_{\fin}(E,m,l,p) > 0$ and~$C_{\fin} = C_{\fin}(E,m,l,p) > 0$ such that
  \begin{displaymath}
    \forall x \in \Sigma \ 
    \exists F_x \in C^{1,\alpha}(T_x\Sigma,(T_x\Sigma)^{\perp}) \ 
    \quad
    \Sigma \cap \Ball(x,R_{\fin}) = \graph(F_x) \cap \Ball(x,R_{\fin})
  \end{displaymath}
  \begin{displaymath}
    \text{and} \quad
    \forall y,z \in T_x\Sigma
    \quad
    \| DF_x(y) - DF_x(z) \| \le C_{\fin} |y-z|^{\alpha} \,,
  \end{displaymath}
  where $\graph(F_x) = \{ z \in \R^n : \exists y \in T_x\Sigma\ \ z = y + F_x(y) \}$.
\end{introthm}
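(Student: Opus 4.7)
The plan is to combine the qualitative regularity given by Theorem~\ref{thm:regularity} with the uniform lower Ahlfors-regularity of Theorem~\ref{thm:uahlreg} so as to replace every $\Sigma$-dependent constant by one depending only on $E,m,l,p$, and then to bootstrap the H\"older exponent $\lambda/\kappa$ produced by Theorem~\ref{thm:regularity} all the way up to the critical value $\alpha=1-ml/p$.

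\emph{Step 1 (uniform constants).} Theorem~\ref{thm:regularity} already delivers $\Sigma$ as a closed, embedded $C^{1,\lambda/\kappa}$-manifold, but the size of the local patches may a~priori depend on $A_{\ahl},R_{\ahl},M_{\theta\beta},R_{\theta\beta}$. Theorem~\ref{thm:uahlreg} replaces $A_{\ahl}$ by an~absolute constant and $R_{\ahl}$ by a~radius $R_0(E,m,l,p)$. Once $\Sigma$ is known to be locally a~$C^1$~graph, a~short topological argument shows that on scales small compared with the quantitative $\beta$-decay of Proposition~\ref{prop:beta-est} the bilateral and one-sided $\beta$-numbers are comparable with an~absolute constant, so that $M_{\theta\beta}$ and $R_{\theta\beta}$ may also be taken to depend only on $E,m,l,p$. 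Feeding these uniform constants back into Proposition~\ref{prop:beta-est} yields
\[
\nbeta(x,r)\le C(E,m,l,p)\,r^{\lambda/\kappa},\qquad r\le R_1(E,m,l,p),
\]
uniformly in $x\in\Sigma$. Running the David--Kenig--Toro graph construction~\cite{MR1808649} with these numbers produces $R_2=R_2(E,m,l,p)>0$ and $C_2=C_2(E,m,l,p)$ such that, for every $x\in\Sigma$, the piece $\Sigma\cap\Ball(x,R_2)$ is the graph of some $F_x\in C^{1,\lambda/\kappa}(T_x\Sigma,(T_x\Sigma)^{\perp})$ with $\|DF_x\|_{C^{0,\lambda/\kappa}}\le C_2$.

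\emph{Step 2 (bootstrap).} Fix $x\in\Sigma$ and work in the graph parametrization $F=F_x$ on $U\subseteq T_x\Sigma$. For $y_0,\ldots,y_{m+1}\in U$ with images $z_i=y_i+F(y_i)$, the base$\times$height identity
\[
\HM^{m+1}(\simp(z_0,\ldots,z_{m+1}))=\tfrac{1}{m+1}\,\HM^m(\simp(z_0,\ldots,z_m))\,\dist(z_{m+1},\aff(z_0,\ldots,z_m))
\]
together with the uniform $C^{1,\lambda/\kappa}$-estimates of Step~1 shows that $\K(z_0,\ldots,z_{m+1})$ is comparable to a~second-order difference quotient of $F$ at the scale $\diam(y_0,\ldots,y_{m+1})$. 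Taking the supremum in the last $m+2-l$ variables and integrating, the hypothesis $\E_p^l(\Sigma)\le E$ translates into a~Campanato-type bound for $DF$ corresponding to the fractional seminorm $W^{1-m(l-1)/p,\,p}$; by the Morrey--Sobolev embedding, valid precisely when $p>ml$, this upgrades to a~H\"older estimate on $DF$ with exponent $1-ml/p=\alpha$ and constant depending only on $E,m,l,p$. Shrinking $R_2$ to some $R_{\fin}(E,m,l,p)$ if necessary yields both conclusions of the theorem with $C_{\fin}=C_{\fin}(E,m,l,p)$.

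\emph{Main obstacle.} The principal difficulty lies in Step~2: one has to choose the ``free'' $m+2-l$ auxiliary points in the supremum defining $\E_p^l$ so that the supremum genuinely captures the second-order oscillation of $DF$, and to carry out the quantitative comparison between the intrinsic discrete curvature $\K$ and an~extrinsic Hessian-like quantity for $F$ without losing any of the integrability needed to reach the sharp exponent $\alpha=1-ml/p$. A~secondary, book-keeping obstacle in Step~1 is verifying that the $\theta/\beta$ comparison holds with an~absolute constant on a~scale controlled only by $E,m,l,p$; this is essential because $M_{\theta\beta}$ enters multiplicatively in the estimate of Theorem~\ref{thm:regularity}.
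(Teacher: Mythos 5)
Your Step~1 is essentially the paper's route through Sections~4--5: Theorem~\ref{thm:regularity} plus Theorem~\ref{thm:uahlreg} give uniform Ahlfors constants, Proposition~\ref{prop:abs-M-theta} replaces $M_{\theta\beta}$ and $R_{\theta\beta}$ by absolute constants, and the tangent-plane oscillation estimates (Lemmas~\ref{lem:bap-osc}--\ref{lem:tan-osc} and Lemma~\ref{lem:1point-rad}) yield graph representations of size $R_{\laka}(E,m,l,p)$ with uniform $C^{1,\lambda/\kappa}$ bounds. Be aware, though, that the ``short topological argument'' you invoke for the $\theta/\beta$ comparison is in fact Proposition~\ref{prop:abs-M-theta}, which requires the full linking-number and cone-homotopy machinery of Section~\ref{sec:adm-sets}; it is not a two-line observation.

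The genuine gap is in Step~2. The assertion that ``the hypothesis $\E_p^l(\Sigma)\le E$ translates into a~Campanato-type bound for $DF$'' is exactly the statement that needs proving, and you defer it rather than prove it. The difficulty you correctly flag --- choosing the free $m+2-l$ points so that the supremum in the integrand genuinely captures the oscillation of $DF$, without degenerate configurations destroying the lower bound on $\K$ --- is the technical heart of the theorem, and nothing in your proposal resolves it. The paper's Lemma~\ref{lem:bootstrap} does so by a~concrete mechanism: a~Chebyshev argument on the energy shows that the set of ``bad'' parameters $\bad(x_0,\ldots,x_{l-2})$ has measure at most $\omega_m(|x-y|/(kN))^m$, so one can pick points $y_0,\ldots,y_m$ outside it forming a~$\red$-basis at scale $|x-y|/N$; the smallness of $\Kphi$ at these good points bounds $\dist(\varphi(y_i),\varphi(x_0)+X)$ and hence $\dgras(X,Y)\lesssim E_p^l(x,y)^{1/p}|x-y|^{\alpha}$, while the error in replacing tangent planes by secant planes is $C_1\Phi(2|x-y|/N,\cdot)$; iterating with $N$ chosen so that $C_1N^{-\alpha}<1$ kills the first term and sums the second. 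Your Campanato/fractional-Sobolev route (essentially the characterization later proved in the cited work of Blatt and the author) could in principle work, but deriving the required lower bound on the energy in terms of a~$W^{1+s,p}$-seminorm of $F$ demands the same careful point selection, so the proposal as written does not close the argument.
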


This work already lead to a~few other results. In our~joint work with
Szuma{\'n}ska~\cite{SKMS} we~have constructed an~example of~a~function $f \in
C^{1,\alpha_0}([0,1]^m)$, where $\alpha_0 = 1 - \frac {m(m+1)}p$, whose graph
has infinite $\E_p^{m+2}$-energy and we~proved that for any $\alpha_1 >
\alpha_0$ the graphs of~$C^{1,\alpha_1}$ functions always have finite
energy. Later this result was complemented by our joint work with
Blatt~\cite{blatt-kola}, where we have shown that a~$C^1$-submanifold of~$\R^n$
has finite $\E_p^l$-energy for some $p > m(l-1)$ and $l \in \{ 2, \ldots, m+2
\}$ \emph{if and only if} it is locally a~graph of~a~function
in~the~Sobolev-Slobodeckij space $W^{1+s,p}$, where $s = 1 -
\frac{m(l-1)}p$. In~another article~\cite{w2pchar} written jointly with
Strzelecki and von der~Mosel, we have shown that an~$m$-fine set $\Sigma
\subseteq \R^n$ is~a~$W^{2,p}$-manifold \emph{if and only if} it satisfies
the~condition $\E_p^1(\Sigma) < \infty$. The paper \cite{w2pchar} includes
Theorem~\ref{thm:optimal-reg} for the $\E_p^1$-energy and a~counterpart
of~Theorem~\ref{thm:optimal-reg} for a~modified version
of~the~$\E_p^{tp}$-energy, where one integration was replaced by taking the
supremum. In~a~forthcoming joint article with Strzelecki and~von
der~Mosel~\cite{W2pcharII} we also prove a~compactness result similar
to~\cite[Theorem~1.5]{0911.2095} for the $\E_p^l$ and~$\E_p^{tp}$ energies.

\subsection*{Organization of the paper}
In Section~\ref{sec:prelim} we describe the notation, we state precisely the
result of~\cite{MR1808649} about Reifenberg flat sets with vanishing constant
and we prove some auxiliary propositions about roughly regular simplices and
about the metric on the Grassmannian. In~\ref{sec:p-energy} we also show that
$C^2$-manifolds have finite $\E_p^l$-energy for any $p > 0$.
In~Section~\ref{sec:first-reg} we prove Proposition~\ref{prop:beta-est} and
Theorem~\ref{thm:regularity} and we give some examples of $m$-fine sets.
In~Section~\ref{sec:adm-sets} we establish Theorem~\ref{thm:uahlreg}. For this
we~need to define another class of admissible sets and prove some more auxiliary
results about cones and homotopies inside cones. In
Section~\ref{sec:tangent-planes} we prove a counterpart
of~Theorem~\ref{thm:optimal-reg}, where $\alpha$ is replaced with
$\lambda/\kappa$. In~Section~\ref{sec:improved-holder} we bootstrap the exponent
$\lambda/\kappa$ to the optimal $\alpha = 1 - \frac{ml}p$ and consequently
establish Theorem~\ref{thm:optimal-reg}.



\section{Preliminaries}
\label{sec:prelim}

\subsection{Notation}
We write $\Sphere$ for the unit $(n-1)$-dimensional sphere centered at~the
origin and we write $\Ball$ for the unit $n$-dimensional open ball centered
at~the origin. We also use the symbols $\Sphere_r = r \Sphere$, $\Ball_r = r
\Ball$, $\Sphere(x,r) = x + r \Sphere$ and~$\Ball(x,r) = x + r \Ball$.

If $v = (v_1,\ldots,v_n)$ is a~vector in~$\R^n$, we write $|v| = \sqrt{\sum
  |v_i|^2} = \sqrt{\langle v, v \rangle}$ for the standard Euclidean norm
of~$v$. If $\mat A~: \R^k \to \R^l$ is a~linear operator, we write $\| \mat A \|
= \sup_{|v| = 1} |\mat Av|$ for the operator norm of~$\mat A$.

The symbol $G(n,m)$ denotes the Grassmann manifold of~$m$-dimensional linear
subspaces of~$\R^n$. Whenever we write $U \in G(n,m)$ we identify the point $U$
of~the space $G(n,m)$ with the appropriate $m$-dimensional subspace of~$\R^n$.
In~particular any vector $u \in U$ is treated as an $n$-dimensional vector
in~the ambient space $\R^n$ which happens to lie in~$U \subseteq \R^n$.

If $A$ is any set, then we write $\opid_A : A \to A$ for the identity mapping.
Let $H \in G(n,m)$. We~use the symbol $\proj_H$ to denote the orthogonal
projection onto $H$ and $\pperp_H = I - \proj_H$ to denote the orthogonal
projection onto the orthogonal complement $H^{\perp}$. We write $\aff\{x_0,
\ldots, x_m \}$ for the smallest affine subspace of~$\R^n$ containing points
$x_0, \ldots, x_m \in \R^n$, i.e.
\begin{displaymath}
  \aff\{x_0, \ldots, x_m \} = x_0 + \opspan\{x_1-x_0, \ldots, x_m-x_0\} \,.
\end{displaymath}
Let $\mathbf{T} = \simp(x_0, \ldots, x_k)$. We set
\begin{itemize}
\item $\face_i \mathbf{T} = \simp(x_0, \ldots, \widehat{x_i}, \ldots, x_k)$ - the $i$-th face of~$\mathbf{T}$,
\item $\height_i(\mathbf{T}) = \dist(x_i, \aff\{ x_0, \ldots, \widehat{x_i} , \ldots, x_k \}$ - the height lowered from $x_i$,
\item $\hmin(\mathbf{T}) = \min\{ \height_i(\mathbf{T}) : i = 0,1,\ldots,k \}$ - the minimal height of~$\mathbf{T}$.
\end{itemize}
In~the course of~the proofs we will frequently use cones and ''conical caps''
of~different sorts. We define
\begin{itemize}
\item $\Cone(\delta,H) = \{ x \in \R^n : |\pperp_H(x)| \ge \delta |x| \}$ - the
  \emph{cone} with ''axis'' $H^{\perp}$ and ''angle'' $\delta$,
\item $\Shell(r,R) = \Ball_R \setminus \overline{\Ball}_r$ - the open
  \emph{shell} (or the \emph{$n$-annulus}) of~radii $r$ and $R$,
\item $\Cone(\delta,H,r,R) = \Cone(\delta,H) \cap \Shell(r,R)$ - the
  \emph{conical cap} with ''angle'' $\delta$, ''axis'' $H^{\perp}$ and radii~$r$
  and~$R$ as the intersection of~a~cone with a~shell.
\end{itemize}

\begin{rem}
  We use the notation $C = C(x,y,z)$ to denote that $C$ depends \emph{solely}
  on~$x$, $y$ and~$z$. The symbols $C$, $\hat{C}$, $\tilde{C}$, $\bar{C}$ are
  used to denote general constants, whose values may change in different parts
  of the text. Subscripts in constants (like ``$C_{\theta}$'') \emph{do~not
    denote dependences} but are used to name the constant and distinguish it
  from other constants. Subscripted constants always have global meaning and do
  not change.
\end{rem}

\subsection{Reifenberg flat sets}

For convenience we introduce the following
\begin{defin}
  \label{def:beta-plane}
  Let $\Sigma \subseteq \R^n$ be any set. Let $x \in \Sigma$ and $r > 0$. We
  say that $H \in G(n,m)$ is the \emph{best approximating $m$-plane} for
  $\Sigma$ in~$\CBall(x,r)$ and write $H \in \BAP(x,r)$ if the
  following condition is satisfied
  \begin{displaymath}
    \HD(\Sigma \cap \CBall(x,r), (x + H) \cap \CBall(x,r))
    \le \ntheta(x,r) \,.
  \end{displaymath}
\end{defin}
Since $G(n,m)$ is compact, such $H$ always exists, but it might not be unique,
e.g.~consider the set $\Sigma = \Sphere \cup \{ 0 \}$ and take $x=0$, $r=2$.

Recall the definitions of $\nbeta$ and $\ntheta$ given in the introduction.
In~\cite{MR1808649}, the authors define the~$\beta$ and $\theta$ numbers in
a~slightly different way using open balls instead of~closed ones. This does~not
change much since both definitions lead to comparable quantities
(see~\cite[Proposition~1.35]{slawek-phd})

\begin{defin}[cf.~\cite{MR1808649}, Definition~1.3]
  \label{def:rfvc}
  We say that a~closed set $\Sigma \subseteq \R^n$ is \emph{Reifenberg-flat with
    vanishing constant} (of~dimension $m$) if for every compact subset $K
  \subseteq \Sigma$
  \begin{displaymath}
    \lim_{r \to 0} \sup_{x \in K} \ntheta(x,r) = 0 \,.
  \end{displaymath}
\end{defin}

The following proposition was proved by David, Kenig and Toro.
\begin{prop}[cf.~\cite{MR1808649}, Proposition~9.1]
  \label{prop:dkt-reg}
  Let $\tau \in (0,1)$ be given. Suppose $\Sigma$ is a~Reifenberg-flat set with
  vanishing constant of~dimension $m$ in~$\R^n$ and that, for each compact subset
  $K \subseteq \Sigma$ there is a~constant $C_K$ such that
  \begin{displaymath}
    \nbeta(x,r) \le C_K r^{\tau} \quad \text{for each $x \in K$ and $r \le 1$.}
  \end{displaymath}
  Then $\Sigma$ is a~$C^{1,\tau}$-submanifold of~$\R^n$.
\end{prop}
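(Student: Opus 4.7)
The plan is to combine the qualitative Reifenberg parameterization (which the vanishing $\theta$-number hypothesis already supplies) with the quantitative Hölder decay of $\beta$-numbers to upgrade from bi-Hölder topological flatness to $C^{1,\tau}$ regularity. Concretely, I would first show that the best-approximating planes converge at a geometric rate when we refine the scale, then that they are locally $\tau$-Hölder in the base point, and finally that these two facts translate into $C^{1,\tau}$ regularity of Reifenberg's parameterization.

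Fix $x_0 \in K$ and the dyadic scales $r_k = 2^{-k}$, and pick $H_k \in \BAP(x_0, r_k)$. The \eqref{def:ahlfors}-free ingredient is the simple geometric lemma: if two $m$-planes $H, H'$ pass close to $x_0$ and the set $\Sigma \cap \CBall(x_0, r)$ is simultaneously $\varepsilon r$-close to both (in Hausdorff distance for one, one-sided $\beta$-sense for the other), then for all small enough $\varepsilon$, $\dgras(H, H') \lesssim \nbeta(x_0, r)$. Applied to $H_k$ and $H_{k-1}$ at the common scale $r_{k-1}$ (using vanishing $\theta$ to put $\varepsilon$ below the threshold of the lemma), this yields $\dgras(H_k, H_{k-1}) \lesssim C_K r_{k-1}^{\tau}$. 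Summing the geometric series produces a limit plane $T_{x_0} := \lim_k H_k$ with $\dgras(H_k, T_{x_0}) \lesssim r_k^{\tau}$.

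Next I would prove that $x \mapsto T_x$ is $\tau$-Hölder on $K$. Given $x, y \in K$ with $\delta := |x-y|$ small, choose $k$ so that $r_k \simeq \delta$; both $T_x$ and $T_y$ are $\lesssim r_k^{\tau}$-close to $m$-planes that in turn approximate $\Sigma$ inside a common ball of radius $\simeq r_k$ containing both $x$ and $y$. The same two-plane comparison lemma, together with the $\beta$-number estimate at that common scale, gives $\dgras(T_x, T_y) \lesssim \delta^{\tau}$.

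The main obstacle is converting these plane estimates into a genuine $C^{1,\tau}$ graph parameterization, and this is where Reifenberg's construction does the heavy lifting. On a small ball $\CBall(x_0, r)$ I would build the parameterization $\psi : T_{x_0} \cap \CBall(x_0, r) \to \Sigma$ as the uniform limit of the iterated interpolations $\psi_k$ between the projections $\proj_{H_k(\xi)}$ taken over a $r_k$-net of centers $\xi$ on the previous approximation; the standard estimates (see the proofs in \cite{MR1808649} and~\cite{Sim96}) show $\|\psi_{k+1} - \psi_k\|_{\infty} \lesssim r_k^{1+\tau}$ and $\|D\psi_{k+1} - D\psi_k\|_{\infty} \lesssim r_k^{\tau}$, using exactly the scale-refinement estimate $\dgras(H_{k+1}, H_k) \lesssim r_k^\tau$ proved above. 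The telescoping series thus converge in $C^1$, giving $D\psi(y) = \proj_{T_{\psi(y)}}$; the Hölder estimate on $T_x$ then upgrades $D\psi$ to $C^{\tau}$. Therefore $\Sigma \cap \CBall(x_0, r)$ is the graph of a $C^{1,\tau}$ function over $T_{x_0}$, as required.
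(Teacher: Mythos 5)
First, note that the paper does not prove this proposition at all: it is quoted verbatim from David--Kenig--Toro \cite{MR1808649} (their Proposition~9.1) and used as a black box, so there is no in-paper argument to compare against. Your sketch follows the standard route for that result (geometric convergence of approximating planes, H\"older continuity of the limit planes, upgrade of the Reifenberg parameterization to $C^1$ convergence), which is the right strategy.

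There is, however, one genuine gap in the first step. You choose $H_k \in \BAP(x_0,r_k)$, i.e.\ the planes optimal for the \emph{bilateral} number $\ntheta$, and claim $\dgras(H_k,H_{k-1}) \lesssim C_K r_{k-1}^{\tau}$. The two-plane comparison only gives $\dgras(H,H') \le C(m)(\varepsilon_1+\varepsilon_2)$, where $\varepsilon_i$ measure how far $\Sigma\cap\CBall(x_0,r)$ sticks out of the $\varepsilon_i r$-neighborhoods of the two planes (the opposite, ``no holes'' direction is needed only qualitatively, to guarantee that $\Sigma\cap\CBall(x_0,r)$ contains a nondegenerate $m$-simplex spanning both planes). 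For $H_k\in\BAP(x_0,r_k)$ the one-sided containment of $\Sigma$ in a slab around $x_0+H_k$ is controlled only by $\ntheta(x_0,r_k)$, not by $\nbeta(x_0,r_k)$: the $\theta$-optimal plane can be tilted by as much as $\sim\ntheta$ away from the $\beta$-optimal one. Since the hypotheses give no decay rate whatsoever for $\ntheta$ (only that it vanishes), your telescoping series $\sum_k \dgras(H_k,H_{k-1})$ is not known to converge geometrically, and the whole quantitative chain (rate of convergence to $T_{x_0}$, $\tau$-H\"older dependence of $T_x$ on $x$, and the estimate $\|D\psi_{k+1}-D\psi_k\|_\infty \lesssim r_k^{\tau}$ in the Reifenberg iteration) collapses. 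The fix is routine but necessary: run the dyadic comparison with planes $P_k$ realizing (nearly) the infimum in $\nbeta(x_0,r_k)$, so that both one-sided closeness parameters at the common scale are $\lesssim C_K r_k^{\tau}$, and invoke the vanishing of $\ntheta$ only to ensure, for all $k$ beyond a threshold uniform on $K$, the spanning property that makes the comparison lemma applicable. With that correction (and granting the Reifenberg iteration estimates you defer to \cite{MR1808649} and \cite{Sim96}), the argument goes through.
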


\subsection{Voluminous simplices}

Here we define the~class of~\emph{$(\eta,d)$-voluminous} simplices, where $\eta$
measures the~``regularity'' of~a~simplex. The curvature $\K$ of~any such simplex
is~controlled in~terms~of $\eta$ and~$d$. A~very similar notion was used
by~Lerman and~Whitehouse in~\cite[\S~3.1]{MR2558685}, where these kind~of
simplices were called $1$-separated. We~derive estimates of~the~distance
by~which we~can move each vertex of~an~$(\eta,d)$-voluminous simplex without
losing the~lower bound on~the~curvature. We will use this result to~obtain
a~lower bound on the $\E_p^l$-energy in~the~proof
of~Proposition~\ref{prop:eta-d-balance}.

\begin{defin}
  Let $\mathbf{T} = \simp(x_0,\ldots,x_k)$ be a~simplex in~$\R^n$ and let $d \in
  (0,\infty)$ and $\eta \in (0,1)$. We say that $\mathbf{T}$ is
  \emph{$(\eta,d)$-voluminous} if
  \begin{displaymath}
    \diam(\mathbf{T}) \le d
    \quad \text{and} \quad
    \hmin(\mathbf{T}) \ge \eta d \,.
  \end{displaymath}
\end{defin}

\begin{rem}
  \label{rem:reg-curv}
  If $\mathbf{T} = \simp T$ is $(\eta,d)$-voluminous then
  \begin{displaymath}
    \frac{(\eta d)^k}{k!}  \le \HM^k(\mathbf{T}) \le \frac{d^k}{k!} \,,
    \quad \text{hence} \quad
    \K(T) \ge \frac{\eta^k}{k! d}  \,.
  \end{displaymath}
\end{rem}

Let us recall the~definition of~the~outer product:
\begin{defin}
  \label{def:wedge}
  Let $w_1$, \ldots, $w_l$ be vectors in $\R^n$. We define \emph{the outer
    product $w_1 \wedge \cdots \wedge w_l$} to be the~vector in~$\R^{\binom
    nl}$, whose coordinates are exactly the~$l$-minors of~the~$(n \times
  l)$-matrix $(w_1,\ldots,w_l)$.
\end{defin}

\begin{rem}
  \label{rem:wedge-meas}
  A~standard fact from linear algebra says that the length $|w_1 \wedge \cdots
  \wedge w_l|$ of~the~outer product of~$w_1$, \ldots, $w_l$ is~equal
  to~the~$l$-dimensional volume of the parallelotope spanned by $w_1$, \ldots,
  $w_l$. In~particular $|w_1 \wedge \cdots \wedge w_l| \le |w_1| \cdot |w_2|
  \cdots |w_k|$.
\end{rem}

\begin{prop}
  \label{prop:vol-close}
  Let $\mathbf{T}_0 = \simp T_0 = \simp(x_0,\ldots,x_k)$ be an
  $(\eta,d)$-voluminous simplex in~$\R^n$. There exists a~number $\varsigma_k =
  \varsigma_k(\eta) \in (0,1)$ such that for~any~simplex $\mathbf{T}_1 = \simp
  T_1 = \simp(y_0,\ldots,y_k)$ satisfying $|x_i - y_i| \le \varsigma_k d$
  for~each $i = 1, \ldots, k$ the~following estimate
  \begin{equation}
    \label{est:vol-close}
    \frac 34 \HM^k(\mathbf{T}_0) \le \HM^k(\mathbf{T}_1) \le \frac 54 \HM^k(\mathbf{T}_0)
    \quad \text{holds, hence also} \quad
    \K(T_1) \ge \frac{3 \eta^k}{4 k! d} \,.
  \end{equation}
\end{prop}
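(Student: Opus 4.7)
The plan is to~rewrite the~volume of~a~simplex in terms of~the~outer product of~its edge vectors based at~one vertex, and then estimate how this outer product changes under small perturbations of~the~vertices via multilinearity. By translation invariance of~$\HM^k$, I~translate both simplices so that $x_0 = y_0 = 0$ (reading the~hypothesis $|x_i - y_i| \le \varsigma_k d$ as holding for all $i = 0,1,\ldots,k$, as~is necessary for the~statement to~be meaningful); then setting $v_i = x_i$ and $w_i = y_i$ I~obtain $|v_i| \le d$, $|w_i - v_i| \le 2\varsigma_k d$ and~$|w_i| \le (1+2\varsigma_k)d$. By~Remark~\ref{rem:wedge-meas} the~volumes are encoded as $k!\HM^k(\mathbf{T}_0) = |v_1 \wedge \cdots \wedge v_k|$ and $k!\HM^k(\mathbf{T}_1) = |w_1 \wedge \cdots \wedge w_k|$, while the~$(\eta,d)$-voluminosity of~$\mathbf{T}_0$ together with~Remark~\ref{rem:reg-curv} gives the~crucial lower bound $|v_1 \wedge \cdots \wedge v_k| \ge (\eta d)^k$.

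The~core computation uses the~elementary telescoping identity
\begin{displaymath}
  w_1 \wedge \cdots \wedge w_k - v_1 \wedge \cdots \wedge v_k
  = \sum_{i=1}^{k} w_1 \wedge \cdots \wedge w_{i-1} \wedge (w_i - v_i) \wedge v_{i+1} \wedge \cdots \wedge v_k,
\end{displaymath}
obtained from multilinearity of~the~outer product. Applying the~submultiplicativity bound $|u_1 \wedge \cdots \wedge u_k| \le |u_1| \cdots |u_k|$ (Remark~\ref{rem:wedge-meas}) to each summand and inserting the~bounds on~$|v_j|$, $|w_j|$ and~$|w_i - v_i|$ above produces
\begin{displaymath}
  |w_1 \wedge \cdots \wedge w_k - v_1 \wedge \cdots \wedge v_k| \le 2k\,\varsigma_k\,(1+2\varsigma_k)^{k-1} d^k.
\end{displaymath}

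It~then suffices to~choose $\varsigma_k = \varsigma_k(\eta) \in (0,\tfrac12]$ so small that the~right-hand side is~bounded by~$\tfrac14 (\eta d)^k \le \tfrac14 |v_1 \wedge \cdots \wedge v_k|$; an~explicit choice of~the form $\varsigma_k = c_k \eta^k$ with $c_k$ a~suitably small constant depending only on~$k$ is~enough (imposing $\varsigma_k \le \tfrac12$ first bounds $(1+2\varsigma_k)^{k-1} \le 2^{k-1}$, so~any $\varsigma_k \le \eta^k/(2^{k+2}k)$ works). The~reverse triangle inequality then delivers both inequalities in~\eqref{est:vol-close} simultaneously. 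Finally, combining the~resulting lower bound on~$\HM^k(\mathbf{T}_1)$ with the~diameter estimate $\diam(\mathbf{T}_1) \le (1+2\varsigma_k)d$ and further shrinking~$\varsigma_k$ if~needed to~absorb the~factor~$(1+2\varsigma_k)^{k+1}$ yields the~claimed curvature bound. The~only bookkeeping point is~calibrating~$\varsigma_k$ so that the~wedge-perturbation error and the~diameter growth are controlled together, but this is~a~routine polynomial inequality and not a~conceptual obstacle.
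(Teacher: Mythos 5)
Your proof is correct and follows essentially the same route as the paper: express the $k$-volumes via outer products of edge vectors, control the perturbation by multilinearity, and use the voluminosity bound $k!\,\HM^k(\mathbf{T}_0)\ge(\eta d)^k$ to pick $\varsigma_k\sim\eta^k$ (the paper expands $(v_1+w_1)\wedge\cdots\wedge(v_k+w_k)$ into all $2^k$ terms rather than telescoping, but that is cosmetic). If anything you are more careful than the paper on the two bookkeeping points it glosses over --- the contribution of $|x_0-y_0|$ to $|w_i-v_i|$ and the growth of $\diam(\mathbf{T}_1)$ in the final curvature bound --- though to truly ``absorb'' the factor $(1+2\varsigma_k)^{k+1}$ one should tighten the volume estimate from $\tfrac34$ to $1-\epsilon$ rather than keep the stated constant, a harmless adjustment.
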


\begin{proof}
  Let $\tilde{\varsigma} \in (0,1)$ be some number and~let~$T_1 =
  (y_0,\ldots,y_k)$ be~such that $|x_i - y_i| \le \tilde{\varsigma} d$ for each
  $i=1,\ldots,k$. We set $v_i = x_i - x_0$ and $w_i = (y_i - y_0) - v_i$, where
  $i = 1,\ldots,k$.
  \begin{align*}
    \HM^k(\mathbf{T}_1) &= \frac 1{k!} |(v_1 + w_1) \wedge \ldots \wedge (v_k + w_k)| \\
    &= \frac 1{k!} |(v_1 \wedge \ldots \wedge v_k) 
    + (w_1 \wedge v_2 \wedge \ldots \wedge v_k) + (v_1 \wedge w_2 \wedge \ldots \wedge v_k) \ldots \\
    &\phantom{= \frac 1{k!} |} \ldots + (w_1 \wedge w_2 \wedge v_3 \wedge \ldots \wedge v_k) + \ldots
    + (w_1 \wedge w_2 \wedge w_3 \wedge \ldots \wedge w_k) | \,.
  \end{align*}
  Whenever we take an outer product of $j$ vectors from the set~$\{ w_1, \ldots,
  w_k \}$ and~$(k-j)$ vectors from the set~$\{ v_1,\ldots,v_k \}$ we obtain
  a~vector of length at~most $d^{k-j} (\tilde{\varsigma} d)^j$. Hence we~can
  write
  \begin{displaymath}
    |(w_1 \wedge v_2 \wedge \ldots \wedge v_k) + \ldots + (w_1 \wedge w_2 \wedge \ldots \wedge w_k)|
    \le \sum_{j=1}^k \binom kj d^k \tilde{\varsigma}^j = d^k((1 + \tilde{\varsigma})^k - 1)\,,
  \end{displaymath}  
  \begin{displaymath}
    \text{which gives}\quad
    \HM^k(\mathbf{T}_0) - d^k((1 + \tilde{\varsigma})^k - 1) 
    \le \HM^k(\mathbf{T}_1) 
    \le \HM^k(\mathbf{T}_0) + d^k((1 + \tilde{\varsigma})^k - 1) \,.
  \end{displaymath}
  Since $\mathbf{T}_0$ is $(\eta,d)$-voluminous, it satisfies
  $\HM^k(\mathbf{T}_0) \ge \frac 1{k!}(\eta d)^k$. We~set
  \begin{equation}
    \label{def:varsigma}
    \varsigma_k = \left(1+\frac{\eta^k}{4k!}\right)^{\frac 1k} -  1 \,,
  \end{equation}
  so that $d^k((1 + \tilde{\varsigma})^k - 1) \le \frac 14 \HM^k(\mathbf{T}_0)$.
  Thus, if $|x_i - y_i| \le \varsigma_k d$, then we obtain the desired estimate
  $\frac 34 \HM^k(\mathbf{T}_0) \le \HM^k(\mathbf{T}_1) \le \frac 54
  \HM^k(\mathbf{T}_0)$.
\end{proof}

\begin{rem}
  Let $x,s \in \R$ and $s > 0$. When $|x| \approx 0$, the~function $(1+x)^s$
  behaves asymptotically like $1 + sx$, hence there exists a~constant
  $C_{\varsigma} = C_{\varsigma}(k) > 1$ such that
  \begin{equation}
    \label{eq:sigma-asymp}
    \forall \eta \in (0,1) \quad
    \frac{1}{C_{\varsigma}} \eta^k \le \varsigma_k(\eta) \le C_{\varsigma} \eta^k \le \tfrac 14\,.
  \end{equation}
\end{rem}

\subsection{The $\E_p^l$-energy for smooth manifolds}
\label{sec:p-energy}

Observe that $\K(\alpha T) = \frac 1{\alpha}\K(T)$ for any $\alpha > 0$, so our
curvature behaves under scaling like the original Menger curvature $\mathbf{c}$.
If~$\simp T$ is~a~regular simplex (meaning that all the side lengths are equal),
then $\K(T) \simeq \frac{1}{\diam T} \simeq R(T)^{-1}$, where $R(T)$ is the
radius of~a~circumsphere of~$T$. For $m = 1$ one easily sees that we always have
$\K(T) \le c(T) = R^{-1}(T)$. In dimension $m=2$ we also have $\K(T) \le 4\pi
\K_{SvdM}(T)$ for any $T$ and $\K(T) \simeq \K_{SvdM}(T)$ if $T$ is~a~regular
simplex.

We emphasis the behavior on regular simplices because small, close to regular
(or \emph{voluminous}) simplices are the reason why $\E_p^l(\Sigma)$ might get
very big or infinite. For the class of~$(\eta,d)$-voluminous simplices $T$
the~value $\K(T)$ is comparable with yet another possible definition of~discrete
curvature (cf. \cite[\S 10]{0805.1425})
\begin{displaymath}
  \K'(T) = \frac{\hmin(\simp T)}{\diam(T)^2} 
  = \frac{1}{\diam(T)} \frac{\hmin(\simp T)}{\diam(T)} \,,
\end{displaymath}
which is basically $\frac 1{\diam(T)}$ multiplied by a~scale-invariant
''regularity coefficient'' $\frac{\hmin(\simp T)}{\diam(T)}$. This last factor
prevents $\K'$ from blowing up on simplices with vertices on smooth manifolds.

It occurs that one cannot define $k$-dimensional Menger curvature using
integrals of~$R^{-1}$. This ''obvious'' generalization of~the~Menger curvature
fails because of~examples (see~\cite[Appendix B]{0911.2095}) of~very smooth
embedded manifolds for which this kind of~curvature would be~unbounded.
For the~curvature~$\K$ we have the following
\begin{prop}
  \label{prop:C2mani}
  If $M \subseteq \R^n$ is a~compact, $m$-dimensional, $C^2$ manifold embedded
  in~$\R^n$ then the discrete curvature $\K$ is bounded on $M^{m+2}$. Therefore
  $\E_p^l(M)$ is finite for every $p > 0$ and every $l \in \{ 1, \ldots, m+2
  \}$.
\end{prop}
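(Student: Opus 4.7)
The plan is to reduce the statement to showing that $\K$ is uniformly bounded on $M^{m+2}$. Indeed, since $M$ is a compact $m$-manifold, $\HM^m(M) < \infty$, so $\HM^{ml}(M^l) = \HM^m(M)^l < \infty$. If $\K \le K_0$ on $M^{m+2}$, then the integrand in $\E_p^l(M)$ is bounded by $K_0^p$ and we immediately get $\E_p^l(M) \le K_0^p \HM^m(M)^l < \infty$. So everything reduces to bounding $\K$.

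To bound $\K(x_0,\ldots,x_{m+1})$, I would split into cases based on $d = \diam(x_0,\ldots,x_{m+1})$. Fix a radius $r_0 > 0$, small enough that the local $C^2$ graph description of $M$ is valid uniformly (possible by compactness): for every $x \in M$ and every $y \in M \cap \Ball(x, r_0)$ one can write $y - x = u + v$ with $u \in T_x M$, $|u| \le |y-x|$, and $|v| \le C_M |y-x|^2$, where $C_M$ depends only on $M$. In the trivial case $d \ge r_0$ we use the crude bound $\HM^{m+1}(\simp(x_0,\ldots,x_{m+1})) \le \frac{(\diam M)^{m+1}}{(m+1)!}$, so $\K \le \frac{(\diam M)^{m+1}}{(m+1)! \, r_0^{m+2}}$.

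The interesting case is $d < r_0$. Writing $x_i - x_0 = u_i + v_i$ with $u_i \in T_{x_0} M$, $|u_i| \le d$, and $|v_i| \le C_M d^2$ for $i = 1,\ldots,m+1$, I expand by multilinearity
\begin{displaymath}
  (x_1 - x_0) \wedge \cdots \wedge (x_{m+1} - x_0)
  = \sum_{I \subseteq \{1,\ldots,m+1\}} \bigwedge_{i \in I} v_i \wedge \bigwedge_{j \notin I} u_j \,.
\end{displaymath}
The term $I = \emptyset$ is an outer product of $m+1$ vectors lying in the $m$-dimensional space $T_{x_0} M$ and therefore vanishes. By Remark~\ref{rem:wedge-meas}, every other term has length at most $d^{m+1-|I|} (C_M d^2)^{|I|}$, and since $|I| \ge 1$ and $d \le \diam M$, each such term is bounded by $C\, d^{m+2}$ for a constant $C$ depending only on $M$ and $m$. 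Summing the $2^{m+1} - 1$ nonvanishing terms yields $\HM^{m+1}(\simp(x_0,\ldots,x_{m+1})) \le \tilde C d^{m+2}$, whence $\K \le \tilde C$.

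The only delicate point is producing the uniform quadratic estimate $|v| \le C_M |y-x|^2$ for a single constant $C_M$ working across all base points $x \in M$; this is standard from a finite cover of $M$ by $C^2$ graph charts, but it is where the $C^2$ hypothesis and compactness are both essential. Everything else reduces to the algebraic observation that in the wedge expansion the purely ``tangential'' term drops out because $T_{x_0} M$ cannot accommodate $m+1$ linearly independent vectors.
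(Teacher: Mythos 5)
Your proof is correct, but it takes a genuinely different route from the paper's. The paper factors the argument through Lemma~\ref{lem:beta-curv}, a general statement valid for \emph{arbitrary} sets $\Sigma$: the volume of any $(m+1)$-simplex with vertices on $\Sigma$ is bounded by $C\,\nbeta(x_0,d)\,d^{m+1}$, proved by a slab-volume argument (the product set $\mathbf{T}+\mathbf{S}$ is isometric to $\mathbf{T}\times\mathbf{S}$ and sits inside a thin box). The proposition then reduces to showing $\nbeta(x,r)\le r/(2\varepsilon)$ for a compact $C^2$ manifold, which the paper obtains from the tubular neighborhood and osculating spheres of radius $\varepsilon$. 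You instead bound the simplex volume directly by a multilinear expansion of $(x_1-x_0)\wedge\cdots\wedge(x_{m+1}-x_0)$ into tangential and normal parts, observing that the purely tangential term dies because $T_{x_0}M$ is only $m$-dimensional, and every surviving term carries at least one quadratically small normal factor; the uniform quadratic graph estimate plays the role of the paper's $\beta$-number decay. Your version is more elementary and self-contained for this particular proposition, while the paper's detour through the $\beta$-number lemma buys a tool (Lemma~\ref{lem:beta-curv} and Corollary~\ref{cor:eta-beta}) that is reused later for sets that are not manifolds; your wedge computation, by contrast, is tied to the existence of a genuine tangent plane with quadratic error. Both arguments need, and correctly isolate, the uniform constant $C_M$ in the quadratic estimate, which is where compactness and $C^2$ enter.
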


\begin{lem}
  \label{lem:beta-curv}
  Let $\Sigma \subseteq \R^n$ be any set and let $T = (x_0,\ldots,x_{m+1}) \in
  \Sigma^{m+2}$. We set $\mathbf{T} = \simp T$ and $d = \diam(\mathbf{T})$.
  There exists a constant $C_{\K\beta} = C_{\K\beta}(m,n)$ such that we have
  \begin{displaymath}
    \HM^{m+1}(\mathbf{T}) \le C_{\K\beta} \nbeta(x_0,d) d^{m+1} 
    \quad \text{and consequently} \quad
    \K(T) \le C_{\K\beta} \frac{\nbeta(x_0,d)}{d} \,.
  \end{displaymath}
\end{lem}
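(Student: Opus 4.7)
The plan is to choose a near-optimal $m$-plane $H \in G(n,m)$ for the $\beta$-number at scale $d$ and decompose each edge vector of the simplex into its $H$-parallel and $H$-perpendicular components. Since $\diam(\mathbf{T}) = d$, every vertex $x_i$ lies in $\CBall(x_0,d)$, so after choosing $H$ realising (up to an arbitrarily small error that I send to zero at the end) the infimum in the definition of $\nbeta(x_0,d)$, I obtain
\begin{displaymath}
  |\pperp_H(x_i - x_0)| = \dist(x_i, x_0 + H) \le \nbeta(x_0,d)\, d
  \quad \text{for $i = 1, \ldots, m+1$.}
\end{displaymath}

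Writing $v_i = x_i - x_0$, $u_i = \proj_H v_i$, $w_i = \pperp_H v_i$, I use the standard volume formula $\HM^{m+1}(\mathbf{T}) = \frac{1}{(m+1)!}|v_1 \wedge \cdots \wedge v_{m+1}|$ and expand the outer product $\bigwedge_{i=1}^{m+1}(u_i + w_i)$ into $2^{m+1}$ terms indexed by subsets $J \subseteq \{1,\ldots,m+1\}$ (with $w$ in positions from $J$, $u$ elsewhere). The crucial observation is that the single term with $J = \emptyset$, namely $u_1 \wedge \cdots \wedge u_{m+1}$, vanishes: all $u_i$ lie in the $m$-dimensional subspace $H$, so they are linearly dependent. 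Every remaining term carries at least one factor $w_i$ of length $\le \nbeta(x_0,d)\, d$, while the other factors have length $\le d$ (using Remark~\ref{rem:wedge-meas}).

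Bounding the $J$-term by $d^{m+1-|J|}(\nbeta(x_0,d)\, d)^{|J|}$ and summing over $|J| = 1, \ldots, m+1$ gives
\begin{displaymath}
  |v_1 \wedge \cdots \wedge v_{m+1}| \le d^{m+1}\bigl((1+\nbeta(x_0,d))^{m+1} - 1\bigr) \le (2^{m+1}-1)\, \nbeta(x_0,d)\, d^{m+1} \,,
\end{displaymath}
where in the last step I factor out $\nbeta(x_0,d)$ from the binomial expansion and use $\nbeta(x_0,d) \le 1$ (which holds because $\dist(z, x_0+H) \le |z-x_0| \le d$ for any plane through $x_0$). Thus $C_{\K\beta} := \frac{2^{m+1}-1}{(m+1)!}$ works and depends only on $m$. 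The second inequality follows instantly by dividing by $\diam(T)^{m+2} = d^{m+2}$ in the definition of $\K(T)$.

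There is no real obstacle — the only subtlety is noticing that the leading all-$u$ term vanishes for dimensional reasons (which is what converts an a~priori bound $d^{m+1}$ into a bound with a gain of $\nbeta(x_0,d)$), and being careful to pull a single factor of $\nbeta(x_0,d)$ out of the binomial sum rather than settling for the weaker factor $\nbeta(x_0,d)^{m+1}$ that one would get from any individual term.
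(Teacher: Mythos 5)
Your proof is correct, but it follows a genuinely different route from the paper's. The paper (crediting Simon Blatt) uses a volumetric product argument: it thickens $\mathbf{T}$ by a ball $\mathbf{S}$ of radius $\nbeta(x_0,d)\,d$ in the orthogonal complement of $W = \opspan\{x_1-x_0,\ldots,x_{m+1}-x_0\}$, computes $\HM^n(\mathbf{T}+\mathbf{S})$ exactly as a product, and then traps $\mathbf{T}+\mathbf{S}$ inside a slab of thickness $\sim\nbeta(x_0,d)\,d$ around the best approximating $m$-plane; comparing the two $n$-dimensional volumes yields the bound, with a constant depending on both $m$ and $n$. You instead expand $v_1\wedge\cdots\wedge v_{m+1}$ along the decomposition $v_i = \proj_H v_i + \pperp_H v_i$ and observe that the single term with no $\pperp_H$-factor dies because $m+1$ vectors in the $m$-dimensional space $H$ are dependent --- that is the step that converts the trivial bound $d^{m+1}$ into one carrying a factor of $\nbeta(x_0,d)$. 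The remaining bookkeeping ($\nbeta(x_0,d)\le 1$, pulling one power of $\nbeta$ out of the binomial sum, and letting the $\varepsilon$ from the near-optimal choice of $H$ tend to zero) is all in order. Your expansion is exactly the computational device the paper already uses in Proposition~\ref{prop:vol-close}, so your argument is arguably more self-contained; it also yields a constant $C_{\K\beta} = (2^{m+1}-1)/(m+1)!$ depending on $m$ alone, slightly sharper than the statement requires.
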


\begin{proof}
  \footnote{The author wishes to thank Simon Blatt for significantly simplifying
    this proof while we were working on~\cite{blatt-kola}.}  Without loss of
  generality we can assume that $x_0 = 0$. If the vectors $\{x_1, \ldots,
  x_{m+1}\}$ are not linearly independent, then $\HM^{m+1}(\mathbf{T}) = 0$ and
  there is nothing to prove.

  Let $x_1, \ldots x_{m+1}$ be linearly independent and let $W$ denote the
  $(m+1)$-dimensional vector space spanned be these vectors. Set
  \begin{equation*}
    \mathbf{S} = \{ s \in W^{\perp} : |s| \le \nbeta(x_0,d)d \} \,.
  \end{equation*}
  Then, the set $\mathbf{T} + \mathbf{S}$ is isometric with $\mathbf{T} \times
  \mathbf{S}$ and the following holds
  \begin{equation}
    \label{eq:VolumeST}
    \HM^{n}(\mathbf{T} + \mathbf{S}) 
    = \HM^{m+1}(\mathbf{T}) \HM^{n-m-1}(\mathbf{S})
    = \omega_{n-m-1} \HM^{m+1}(\mathbf{T}) d^{n-m-1} \nbeta(0,d)^{n-m-1} \,.
  \end{equation}

  Using compactness of the Grassmannian we can find a~vector space $V \in
  G(n,m)$ such that
  \begin{displaymath}
    \sup_{y \in \Sigma \cap \CBall(x_0,d)} |\pperp_{V_j}(y)| = \nbeta(x_0,d) d \,.
  \end{displaymath}
  Observe also that the mapping $Q : G(n,m) \to \R^n$ given by $Q(V) = P_V(y)$
  is continuous for any choice of $y \in \R^n$. In~consequence, we get the
  estimate
  \begin{displaymath}
    \forall y \in \Sigma \cap \Ball(x_0,d)
    \quad
    |\pperp_V(y)| \le \nbeta(x_0,d) d  \,.
  \end{displaymath}
  The vertices of $\mathbf{T}$ lie in $\Sigma \cap \CBall(x_0,d)$ and
  $\mathbf{T}$ is convex, so we also have
  \begin{displaymath}
    \forall t \in \mathbf{T}
    \quad
    |\pperp_V(t)| \le \nbeta(x_0,d) d \,.
  \end{displaymath}
  Let $y \in \mathbf{T}+\mathbf{S}$ and let $t \in \mathbf{T}$ and $s \in
  \mathbf{S}$ be such that $s+t = y$. Using the triangle inequality we see that
  \begin{displaymath}
    |\proj_V(y)| \le |y| \le (1 + \nbeta(0,d)) d  
  \end{displaymath}
  \begin{displaymath}
    \text{and} \quad
    |\pperp_V(y)| \le |\pperp_V(t)|  + |\pperp_V(s)| \le 2 \nbeta(x_0,d) d \,.
  \end{displaymath}
  Hence, $\mathbf{T}+\mathbf{S}$ is a subset of
  \begin{displaymath}
    Z = \big\{ y \in \R^n : |\proj_V (y)| \le 2 d,\, |\pperp_V(y)| \le 2 \nbeta(0,d) d \big\} \,.
  \end{displaymath}
  and we obtain
  \begin{equation}
    \label{eq:EstimateST}
    \HM^n(\mathbf{T}+\mathbf{S}) \le \HM^n(Z) = \omega_m \omega_{n-m} 2^n \nbeta(0,d)^{n-m} d^n \,.
  \end{equation}
  Combining \eqref{eq:VolumeST} and \eqref{eq:EstimateST} we obtain the desired
  estimate.
\end{proof}

\begin{cor}
  \label{cor:eta-beta}
  Let $\Sigma \subseteq \R^n$ be any set and let $T = (x_0,\ldots,x_{m+1}) \in
  \Sigma^{m+2}$. There exists a~constant $C_{\eta\beta} = C_{\eta\beta}(n,m)$
  such that if $\simp T$ is $(\eta,d)$-voluminous then the parameters $\eta$ and
  $d$ must satisfy
  \begin{displaymath}
    \eta \le C_{\eta\beta} \nbeta(x_0,d)^{\frac 1{m+1}} \,.
  \end{displaymath}
\end{cor}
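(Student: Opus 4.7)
The plan is to sandwich $\HM^{m+1}(\simp T)$ between the volume lower bound furnished by the $(\eta,d)$-voluminous hypothesis and the upper bound furnished by Lemma~\ref{lem:beta-curv}, then read off the claimed inequality on $\eta$. First I would apply Remark~\ref{rem:reg-curv} with $k=m+1$ to obtain
\begin{displaymath}
  \HM^{m+1}(\simp T) \ge \frac{(\eta d)^{m+1}}{(m+1)!}.
\end{displaymath}

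For the matching upper bound I would set $d_0 := \diam(\simp T) \le d$ and invoke Lemma~\ref{lem:beta-curv} at this natural scale, which delivers $\HM^{m+1}(\simp T) \le C_{\K\beta} \nbeta(x_0, d_0) d_0^{m+1}$. The remaining task is to translate this bound, stated at scale $d_0$, into a bound at the voluminous scale $d$. The key observation, which I would record as a short sub-remark, is that $r \mapsto r \nbeta(x_0, r)$ is non-decreasing in $r$: for every fixed $H \in G(n,m)$ the quantity $\sup_{z \in \Sigma \cap \CBall(x_0,r)} \dist(z, x_0 + H)$ grows with $r$ because the supremum is taken over a larger set, and taking the infimum over $H$ preserves this monotonicity. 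Consequently
\begin{displaymath}
  \nbeta(x_0, d_0)\, d_0^{m+1} = \bigl( \nbeta(x_0, d_0)\, d_0 \bigr)\, d_0^m \le \bigl( \nbeta(x_0, d)\, d \bigr)\, d^m = \nbeta(x_0, d)\, d^{m+1}.
\end{displaymath}

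Combining the two-sided bounds on $\HM^{m+1}(\simp T)$ and cancelling $d^{m+1}$ yields $\eta^{m+1} \le C_{\K\beta}(m+1)!\, \nbeta(x_0, d)$, and extracting an $(m+1)$-th root produces the desired estimate with $C_{\eta\beta} = \bigl( C_{\K\beta}(m+1)! \bigr)^{1/(m+1)}$, depending only on $n$ and $m$ through $C_{\K\beta}$.

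I do not anticipate any real obstacle: the argument reduces entirely to Remark~\ref{rem:reg-curv}, Lemma~\ref{lem:beta-curv}, and the monotonicity of $r \nbeta(x_0, r)$. The only point that demands an explicit remark is this last monotonicity, which is needed precisely because $\diam(\simp T)$ need not equal the voluminous parameter $d$ and the two naturally associated scales for the $\beta$-number therefore have to be reconciled.
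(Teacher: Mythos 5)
Your proof is correct and follows essentially the same route as the paper: combine the lower volume bound of Remark~\ref{rem:reg-curv} with the upper bound of Lemma~\ref{lem:beta-curv} and extract an $(m+1)$-th root. Your explicit reconciliation of the scales $d_0=\diam(\simp T)$ and $d$ via the monotonicity of $r\mapsto r\,\nbeta(x_0,r)$ is a valid observation that the paper leaves implicit, and it is indeed needed since the voluminous parameter $d$ only dominates the diameter.
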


\begin{proof}
  Recalling Remark~\ref{rem:reg-curv} we have the estimate $\HM^{m+1}(\simp T)
  \ge ((m+1)!)^{-1}(\eta d)^{m+1}$, which, combined with
  Lemma~\ref{lem:beta-curv}, leads to $\eta \le ((m+1)! C_{\K\beta})^{\frac
    1{m+1}} \nbeta(x_0,d)^{\frac 1{m+1}}$.
\end{proof}

\begin{proof}[Proof of~Proposition~\ref{prop:C2mani}]
  Since $M$ is a~compact $C^2$-manifold, it has a~tubular neighborhood
  \begin{displaymath}
    M_{\varepsilon} = M + \overline{B}_{\varepsilon}
    = \{ x + y : x \in M,\, y \in \overline{B}_{\varepsilon} \}
  \end{displaymath}
  of~some radius $\varepsilon > 0$ and the nearest point projection $p :
  M_{\varepsilon} \to M$ is a~well-defined, continuous function (see
  e.g.~\cite{MR0110078} for a~discussion of~the properties of~the nearest point
  projection mapping). To find $\varepsilon$ one proceeds as follows. Take the
  principal curvatures $\kappa_1,\ldots,\kappa_m$ of~$M$. These are continuous
  functions $M \to \R$, because $M$ is a~$C^2$ manifold. Next set
  \begin{displaymath}
    \varepsilon = \sup_{x \in M} \max \{ |\kappa_1|, \ldots, |\kappa_m| \} \,.
  \end{displaymath}
  Such maximal value exists due to continuity of~$\kappa_j$ for each $j =
  1,\ldots,m$ and compactness of~$M$.
  
  We will show that for all $r \le \varepsilon$ and all $x \in \Sigma$ we have
  \begin{equation}
    \label{eq:C2-beta}
    \nbeta(x,r) \le \frac 1{2 \varepsilon} r \,.
  \end{equation}
  Next, we apply Lemma~\ref{lem:beta-curv} and get the desired result.

  Choose $r \in (0,\varepsilon]$. Fix some point $x \in \Sigma$ and pick a~point
  $y \in T_xM^{\perp}$ with $|x - y| = \varepsilon$. Note that $y$ belongs to
  the tubular neighborhood $M_{\varepsilon}$ and that $p(y) = x$. Hence, the
  point $x$ is the only point of~$M$ in~the ball
  $\CBall(y,\varepsilon)$. In~other words $M$ lies in~the complement
  of~$\CBall(y,\varepsilon)$. This is true for any $y$ satisfying $y \in
  T_xM^{\perp}$ and $|x - y| = \varepsilon$, so we have
  \begin{displaymath}
    M \subseteq \R^n \setminus \bigcup \left\{
      \CBall(y,\varepsilon) :
      y \perp T_{x}M,\, |y - x| = \varepsilon
    \right\} \,.
  \end{displaymath}
  Pick another point $\bar{x} \in \Sigma \cap \CBall(x,r)$. We then have
  \begin{equation}
    \label{eq:x-pos}
    \bar{x} \in \CBall(x,r) \setminus \bigcup
    \left\{
      \CBall(y,\varepsilon) :
      y \perp T_{x}M,\, |y - x| = \varepsilon
    \right\} \,.
  \end{equation}
  \begin{figure}[!htb]
    \centering
    \includegraphics{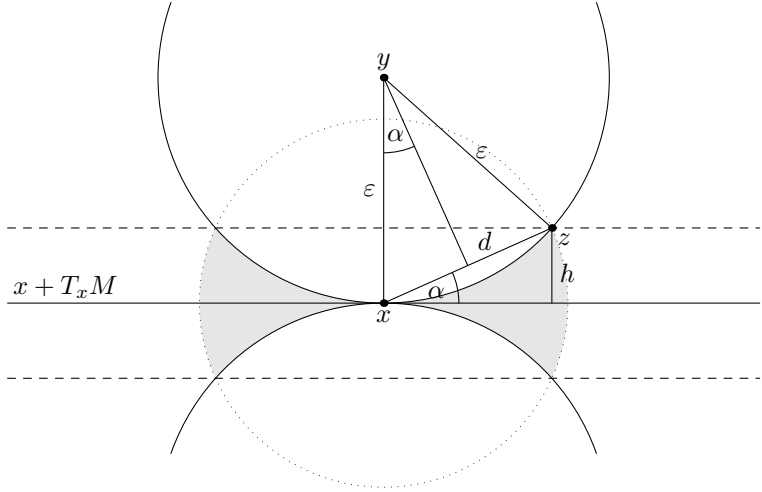}
    \caption{All of~$M \cap \CBall(x,r)$ lies in~the grey area. The point
      $\bar{x}$ lies in~the complement of~$\Ball(y,\varepsilon)$ and inside
      $\CBall(x,r)$ so it has to be closer to $T_{x}M$ than $z$.}
    \label{F:height}
  \end{figure}
  Using \eqref{eq:x-pos} and simple trigonometry, it is ease to calculate the
  maximal distance of~$\bar{x}$ from the tangent space $T_xM$. Let $z$ be any
  point in~the intersection $\partial \Ball(x,r) \cap \partial
  \Ball(y,\varepsilon)$. Note that points of~$M \cap \CBall(x,\varepsilon)$ must
  be closer to $T_xM$ than $z$. In~other words
  \begin{equation}
    \label{eq:z-max-dist}
    \forall x \in M \cap \Ball(x,r)
    \quad
    \dist(x,T_xM) \le \dist(z,T_xM) \,.
  \end{equation}
  This situation is presented on Figure \ref{F:height}. Let $\alpha$ be the
  angle between $T_xM$ and $z$ and set $h = \dist(z,T_xM)$. We use the fact
  that the distance $|z - x|$ is equal to $r$.
  \begin{equation} \label{eq:h-calc}
    \sin \alpha = \frac{|z - x|}{2 \varepsilon} = \frac{h}{|z - x|}
    \quad \Rightarrow \quad
    h = \frac{|z - x|^2}{2 \varepsilon} = \frac{r^2}{2 \varepsilon} \,.
  \end{equation}
  This shows \eqref{eq:C2-beta} and thus finishes the proof.
\end{proof}

\begin{rem}
  Note that the only property of~$M$, which allowed us to prove
  Proposition~\ref{prop:C2mani} was the existence of~an appropriate tubular
  neighborhood $M_{\varepsilon}$. One can easily see that
  Proposition~\ref{prop:C2mani} still holds if $M$ is just a~set
  of~\emph{positive reach} as defined in~\cite{MR0110078}.
\end{rem}

\subsection{The metric on the Grassmannian}
Recall that formally, $G(n,m)$ is defined as the homogeneous space
\begin{displaymath}
  G(n,m) = O(n) / (O(m) \times O(n-m)) \,,
\end{displaymath}
where $O(n)$ is the orthogonal group; see e.g. Hatcher's book~\cite[\S4.2,
Examples 4.53, 4.54 and 4.55]{MR1867354} for the reference. We treat $G(n,m)$ as
a~metric space with the following metric
\begin{defin}
  Let $U,V \in G(n,m)$. We define the metric
  \begin{displaymath}
    \dgras(U,V) = \| \proj_U - \proj_V \| = \sup_{w \in \Sphere} | \proj_U(w) - \proj_V(w) | \,.
  \end{displaymath}
\end{defin}
Note that this metric is different from the~geodesic distance
on~the~Grassmannian. However, the topology induced by~the~metric $\dgras$ agrees
with the standard quotient topology which is~the~same as~the~topology induced
by~the~geodesic distance.
\begin{rem}
  Let $I : \R^n \to \R^n$ denote the identity mapping. We will frequently use
  the following identity without reference
  \begin{displaymath}
    \dgras(U,V) 
    = \| \proj_U - \proj_V \| 
    = \| I - \pperp_U - (I - \pperp_V) \| 
    = \| \pperp_V - \pperp_U \| \,.
  \end{displaymath}
\end{rem}

\begin{defin}
  Let $V \in G(n,m)$ and let $(v_1,\ldots,v_m)$ be the basis of~$V$. Fix some
  radius $\rho > 0$ and a~small constant $\varepsilon \in (0,1)$ We say that
  $(v_1,\ldots,v_m)$ is a~\emph{$\red$-basis} if
  \begin{displaymath}
    \forall i,j \in \{ 1,\ldots, m \}
    \qquad
    (\delta_i^j - \varepsilon) \rho^2 
    \le |\langle v_i, v_j \rangle| \le
    (\delta_i^j + \varepsilon) \rho^2 \,.
  \end{displaymath}
  Here $\delta_i^j$ denotes the~Kronecker delta.
\end{defin}

\begin{prop}
  \label{prop:red-ang}
  Let $(v_1,\ldots,v_m)$ be a~$\red$-basis of~$V \in G(n,m)$ with constants
  $\rho = \rho_0 > 0$ and~$\varepsilon = \varepsilon_0 \in (0,1)$. Let
  $(u_1,\ldots,u_m)$ be some basis of~$U \in G(n,m)$, such that $|u_i - v_i| \le
  \vartheta \rho_0$ for some $\vartheta > 0$ and for each $i =
  1,\ldots,m$. There exist constants $C_{\red} = C_{\red}(m)$ and $\ere =
  \ere(m)$ such that whenever $\varepsilon_0 \le \ere$, then
  \begin{displaymath}
    \dgras(U,V) \le C_{\red} \vartheta \,.
  \end{displaymath}
\end{prop}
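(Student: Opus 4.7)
The plan is to express both projections in closed form using Gram matrices and then apply a standard perturbation estimate for matrix inverses. First, by rescaling every vector by $\rho_0^{-1}$ (which does not affect the projections or the quantity $\dgras(U,V)$) I may assume $\rho_0 = 1$. The hypotheses then read $|\langle v_i,v_j\rangle - \delta_i^j| \le \varepsilon_0$ and $|u_i - v_i| \le \vartheta$. If $\vartheta$ is not small, say $\vartheta \ge \tfrac12$, the conclusion is trivial since $\dgras(U,V) \le 2$ for any two $m$-planes; so it suffices to work in the regime where $\vartheta$ lies below a threshold $\vartheta_0(m)$ chosen below, absorbing the remaining range into the final constant $C_{\red}$.

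Next, I let $B$ and $A$ denote the $n \times m$ matrices whose columns are $v_1, \dots, v_m$ and $u_1, \dots, u_m$, and put $G = B^{T}B$, $H = A^{T}A$ for the Gram matrices. The standard formulas $\proj_V = B G^{-1} B^{T}$ and $\proj_U = A H^{-1} A^{T}$ hold. The $\red$-basis condition gives $|G_{ij} - \delta_i^j| \le \varepsilon_0$, hence $\|G - I_m\| \le m\varepsilon_0$ in operator norm; choosing $\ere(m)$ so that $m\ere \le \tfrac12$ makes $G$ invertible with $\|G^{-1}\| \le 2$ and $\|G^{-1} - I_m\| \le C(m)\varepsilon_0$. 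A short bilinearity computation, together with the bounds $|v_i|, |u_i| \le C(m)$ valid for $\vartheta \le 1$, yields $\|H - G\| \le C(m)\vartheta$; the Neumann-series (Banach) lemma then shows that for $\vartheta$ smaller than a further threshold $\vartheta_0(m)$, the matrix $H$ is invertible with $\|H^{-1}\| \le 4$ and $\|H^{-1} - G^{-1}\| \le C(m)\vartheta$.

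Finally, I telescope
\[
  \proj_U - \proj_V \;=\; (A-B) H^{-1} A^{T} + B(H^{-1} - G^{-1}) A^{T} + B G^{-1} (A-B)^{T},
\]
use $\|A - B\| \le \sqrt{m}\,\vartheta$ (since the columns $u_i - v_i$ have length $\le \vartheta$) together with the bounds on $A$, $B$, $G^{-1}$, $H^{-1}$ collected above, and obtain $\dgras(U,V) = \|\proj_U - \proj_V\| \le C_{\red}(m)\vartheta$. The whole argument is a routine matrix perturbation computation; the only mild subtlety is juggling the two smallness thresholds (one on $\varepsilon_0$ so that $G$ stays well-conditioned, the other on $\vartheta$ so that $H$ does), but the $\vartheta$-threshold can be absorbed into $C_{\red}$ via the trivial bound $\dgras \le 2$, leaving only the stated condition $\varepsilon_0 \le \ere$ in the final statement.
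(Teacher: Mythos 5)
Your argument is correct, and it reaches the conclusion by a genuinely different route than the paper. The paper first runs a quantitative Gram--Schmidt process (Lemma~\ref{lem:GS}) to replace the $\red$-basis by a nearby orthonormal basis, then proves a separate lemma (Lemma~\ref{lem:proj-ang}) bounding $\dgras(U,V)$ by $\max_i|\pperp_U(\hat v_i)|$, and finally closes the estimate with a self-absorbing inequality of the form $\dgras(U,V)\le C_{\pi}C_{gs}\varepsilon_0\,\dgras(U,V)+C_{\pi}\vartheta$, which is where the smallness condition $\varepsilon_0\le\ere$ enters. You instead write both projections in closed form as $B G^{-1}B^{T}$ and $A H^{-1}A^{T}$ with $G=B^{T}B$, $H=A^{T}A$, and reduce everything to Neumann-series perturbation of the Gram matrices; the condition $\varepsilon_0\le\ere(m)$ enters only to keep $G$ well-conditioned, and the absorption step disappears entirely, replaced by the harmless device of pushing the large-$\vartheta$ regime into the trivial bound $\dgras(U,V)\le 2$. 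All the individual steps check out: the $\red$-condition does give $|G_{ij}-\delta_i^j|\le\varepsilon_0$ entrywise (the absolute values in the definition cause no trouble), the bilinear expansion gives $\|H-G\|\le C(m)\vartheta$, the three-term telescoping identity for $\proj_U-\proj_V$ is algebraically correct, and all constants depend only on $m$. Your version is arguably more self-contained and avoids the mildly delicate recursive estimate; what the paper's route buys is the intermediate Lemmas~\ref{lem:GS} and~\ref{lem:proj-ang}, which package the near-orthonormality in a form the author reuses inside the proof of the proposition itself. Either proof serves as a drop-in for the later applications of the proposition.
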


\begin{lem}
  \label{lem:GS}
  Let $(v_1,\ldots,v_m)$ be a~$\red$-basis of~$V \in G(n,m)$ with constants
  $\rho = \rho_0 = 1$ and~$\varepsilon = \varepsilon_0 \in (0,1)$. There exists
  an orthonormal basis $\hat{v}_1,\ldots,\hat{v}_m$ of $V$ and a constant
  $C_{gs} = C_{gs}(m)$ such that $|v_i - \hat{v}_i| \le C_{gs} \varepsilon_0$.
\end{lem}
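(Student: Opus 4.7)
The plan is to run the standard Gram--Schmidt orthonormalization on $(v_1, \ldots, v_m)$ and track how the $\red$-basis defect propagates. With $\rho_0 = 1$ the $\red$-basis condition just asserts that the Gram matrix $\bigl(\langle v_i, v_j\rangle\bigr)$ is entry-wise within $\varepsilon_0$ of the identity, so for small $\varepsilon_0$ the orthonormal vectors produced by Gram--Schmidt should each differ from $v_i$ by $O(\varepsilon_0)$. Before running the induction I would dispose of the large-$\varepsilon_0$ case by noting that $|v_i|\le \sqrt{1+\varepsilon_0}\le \sqrt 2$, so \emph{any} orthonormal basis $\hat v_1,\ldots,\hat v_m$ of $V$ satisfies $|v_i-\hat v_i|\le \sqrt 2+1$; hence by choosing $C_{gs}$ sufficiently large we may restrict attention to $\varepsilon_0\le \varepsilon_*(m)$ for a small threshold to be fixed below.

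I would then proceed by induction on $k$. For the base case, set $\hat v_1 = v_1/|v_1|$: from $\bigl||v_1|^2-1\bigr|\le \varepsilon_0$ one gets $|v_1|\ge \sqrt{1-\varepsilon_0}\ge 1/\sqrt 2$, and then $\bigl||v_1|-1\bigr| = \bigl||v_1|^2-1\bigr|/(|v_1|+1)\le \varepsilon_0$, so $|v_1-\hat v_1| = \bigl||v_1|-1\bigr|\le \varepsilon_0$. For the inductive step, assuming $\hat v_1,\ldots,\hat v_{k-1}$ are already orthonormal with $|v_j-\hat v_j|\le C_j\varepsilon_0$ for $j<k$, set
\[
\tilde v_k \;=\; v_k - \sum_{j=1}^{k-1}\langle v_k,\hat v_j\rangle \hat v_j, \qquad \hat v_k \;=\; \tilde v_k/|\tilde v_k|,
\]
and use $\langle v_k,\hat v_j\rangle = \langle v_k,v_j\rangle + \langle v_k,\hat v_j - v_j\rangle$. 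The first summand has absolute value at most $\varepsilon_0$ directly from the $\red$-basis hypothesis, and the second is bounded by $|v_k|\,|\hat v_j - v_j|\le \sqrt 2\, C_j\varepsilon_0$ via the inductive assumption; hence $|\langle v_k,\hat v_j\rangle|\le C'(m)\varepsilon_0$ and $|\tilde v_k - v_k|\le (k-1)C'(m)\varepsilon_0$.

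What remains is to show that $\hat v_k$ stays close to $\tilde v_k$, i.e.\ that $|\tilde v_k|$ does not drop near $0$. By Pythagoras (using orthonormality of $\hat v_1,\ldots,\hat v_{k-1}$),
\[
|\tilde v_k|^2 \;=\; |v_k|^2 - \sum_{j<k}|\langle v_k,\hat v_j\rangle|^2 \;\ge\; (1-\varepsilon_0) - (k-1)\bigl(C'(m)\bigr)^2\varepsilon_0^2,
\]
which exceeds $1/2$ once $\varepsilon_0 \le \varepsilon_*(m)$ is chosen small enough. Then $\bigl||\tilde v_k|-1\bigr| = \bigl||\tilde v_k|^2-1\bigr|/(|\tilde v_k|+1) = O(\varepsilon_0)$ with an $m$-dependent constant, yielding $|v_k-\hat v_k|\le C_k(m)\varepsilon_0$ after one more triangle inequality. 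Taking $C_{gs} = \max_{k\le m}C_k(m)$ and combining with the first paragraph completes the argument. The only genuine obstacle is this last step---controlling $|\tilde v_k|$ from below---since it is precisely the quantitative linear-independence content of the $\red$-basis condition; everything else is routine linear algebra with careful bookkeeping of constants.
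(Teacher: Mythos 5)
Your proposal is correct and follows essentially the same route as the paper: Gram--Schmidt orthonormalization with an inductive bound on $|\langle v_k,\hat v_j\rangle|$ via $\langle v_k,\hat v_j\rangle=\langle v_k,v_j\rangle+\langle v_k,\hat v_j-v_j\rangle$, followed by control of the normalization step. The only cosmetic difference is that you bound $\bigl||\tilde v_k|-1\bigr|$ via Pythagoras after reducing to small $\varepsilon_0$, whereas the paper uses the triangle inequality $|v_k|-|s_k|\le|\tilde v_k|\le|v_k|+|s_k|$ directly; both are routine and your explicit treatment of the large-$\varepsilon_0$ case is a harmless extra precaution.
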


\begin{proof}
  Set
  \begin{displaymath}
    \hat{v}_1 = \frac{v_1}{|v_1|} \,,\quad
    s_i = \sum_{j=1}^{i-1} \langle v_i, \hat{v}_j \rangle \hat{v}_j \,, \quad
    \tilde{v}_i = v_i - s_i
    \quad \text{and} \quad
    \hat{v}_i = \frac{\tilde{v}_i}{|\tilde{v}_i|} \,.
  \end{displaymath}
  We proceed by induction. For $i = 1$, we have $|\hat{v}_1 - v_1| = |1 - |v_1||
  \le \varepsilon_0$. Assume that for $i = 1, \ldots, i_0-1$ we have $|\hat{v}_i
  - v_i| = C \varepsilon_0$ for some constant $C = C(i)$. It follows that
  \begin{displaymath}
    |v_{i_0} - \tilde{v}_{i_0}| = |s_{i_0}|
    \le \sum_{j=1}^{i_0-1} |\langle v_{i_0}, v_j \rangle| + |\langle v_{i_0}, \hat{v}_j - v_j \rangle |
    \le \hat{C}(i_0) \varepsilon_0 
  \end{displaymath}
  \begin{displaymath}
    \text{and} \quad
    1 - (\hat{C}(i_0)+1) \varepsilon_0
    \le |v_{i_0}| - |s_{i_0}| 
    \le |\tilde{v}_{i_0}| 
    \le |v_{i_0}| + |s_{i_0}| 
    \le 1 + (\hat{C}(i_0)+1) \varepsilon_0 \,,
  \end{displaymath}
  \begin{displaymath}
    \text{hence}\quad
    |v_{i_0} - \hat{v}_{i_0}|
    \le |v_{i_0} - \tilde{v}_{i_0}| + |\tilde{v}_{i_0} - \hat{v}_{i_0}|
    \le (2\hat{C}(i_0) + 1) \varepsilon_0 \,. \qedhere
  \end{displaymath}
\end{proof}

\begin{lem}
  \label{lem:proj-ang}
  Let $(\hat{v}_1,\ldots,\hat{v}_m)$ be an~orthonormal basis of~$V \in G(n,m)$
  and let $U \in G(n,m)$ be such that $|\pperp_U(\hat{v}_i)| \le
  \vartheta$. There exists a~constant $C_{\pi} = C_{\pi}(m)$ such that
  $\dgras(U,V) \le C_{\pi} \vartheta$.
\end{lem}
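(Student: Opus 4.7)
The plan is to bound $\dgras(U,V) = \|\proj_U - \proj_V\| = \sup_{|w|=1}|(\proj_U - \proj_V)(w)|$ by exploiting a decomposition which makes the hypothesis $|\pperp_U(\hat v_i)| \le \vartheta$ directly usable. Writing $w = \proj_V(w) + \pperp_V(w)$ and using $\proj_U = \proj_U\proj_V + \proj_U\pperp_V$ yields the algebraic identity
\[
(\proj_U - \proj_V)(w) = \proj_U(\pperp_V(w)) - \pperp_U(\proj_V(w)).
\]
The first summand belongs to $U$ and the second to $U^{\perp}$, so the two are orthogonal, and Pythagoras gives
\[
|(\proj_U - \proj_V)(w)|^2 = |\proj_U(\pperp_V(w))|^2 + |\pperp_U(\proj_V(w))|^2.
\]
This reduces the task to bounding the two operator norms $\|\pperp_U|_V\|$ and $\|\proj_U|_{V^{\perp}}\|$.

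The first of these follows directly from the hypothesis. Expanding $v \in V$ as $v = \sum_i \langle v,\hat v_i\rangle \hat v_i$ and applying Cauchy--Schwarz,
\[
|\pperp_U(v)| \le \sum_{i=1}^m |\langle v,\hat v_i\rangle|\,|\pperp_U(\hat v_i)| \le \sqrt m\,\vartheta\,|v|,
\]
so $\|\pperp_U|_V\| \le \sqrt m\,\vartheta$.

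The subtler step, which I expect to be the main obstacle, is bounding $\|\proj_U|_{V^{\perp}}\|$, since we have no direct control over projections of vectors of $V^{\perp}$ onto $U$. I would first pass to the adjoint: the maps $\proj_U|_{V^{\perp}} : V^{\perp}\to U$ and $\pperp_V|_U : U\to V^{\perp}$ are mutual adjoints (check: $\langle \proj_U v^{\perp}, u\rangle = \langle v^{\perp}, u\rangle = \langle v^{\perp}, \pperp_V u\rangle$ for $u\in U$, $v^{\perp}\in V^{\perp}$), so they share operator norm. Next I would invoke the dimensional symmetry $\dim U = \dim V = m$: the maps $\proj_V|_U$ and $\proj_U|_V$ are likewise mutual adjoints, hence share singular values $\cos\theta_1,\ldots,\cos\theta_m$ corresponding to the principal angles between $U$ and $V$; consequently $\pperp_V|_U$ and $\pperp_U|_V$ share singular values $\sin\theta_1,\ldots,\sin\theta_m$, and in particular $\|\pperp_V|_U\| = \|\pperp_U|_V\| \le \sqrt m\,\vartheta$ by the previous step.

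Substituting both bounds into the Pythagorean identity and using $|\proj_V(w)|^2 + |\pperp_V(w)|^2 = |w|^2 = 1$ gives $|(\proj_U - \proj_V)(w)|^2 \le m\,\vartheta^2$ for every unit $w$, so $C_{\pi}(m) := \sqrt m$ works. The crux of the argument is the passage from $\|\pperp_U|_V\|$ to $\|\pperp_V|_U\|$: this genuinely uses equality of dimensions, as otherwise one cannot expect $\|\proj_U|_{V^{\perp}}\|$ to be controlled by the hypothesis at all.
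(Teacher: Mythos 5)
Your argument is correct, and it takes a genuinely different route from the paper. The paper's proof is basis-driven: it sets $u_i = \proj_U\hat{v}_i$, observes that $(u_1,\ldots,u_m)$ is a $\red$-basis of $U$ with $\rho=1$ and $\varepsilon=\vartheta^2$, re-orthonormalizes it via the quantitative Gram--Schmidt estimate of Lemma~\ref{lem:GS} to get an orthonormal basis $(\hat{u}_i)$ of $U$ with $|\hat{u}_i-\hat{v}_i|\le(1+C_{gs})\vartheta$, and then compares $\proj_U$ and $\proj_V$ written in these two bases, arriving at $C_{\pi}=2m(1+C_{gs})$. You instead use the operator identity $\proj_U-\proj_V=\proj_U\pperp_V-\pperp_U\proj_V$ with orthogonal ranges, reduce to the two restricted norms $\|\pperp_U|_V\|$ and $\|\pperp_V|_U\|$, bound the first directly from the hypothesis, and transfer the bound to the second via adjointness and the symmetry of principal angles between subspaces of equal dimension. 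Your route buys a cleaner and sharper constant ($C_{\pi}=\sqrt{m}$), avoids any dependence on Lemma~\ref{lem:GS}, and sidesteps a minor issue in the paper's argument (for $\vartheta$ moderately large but below $1$ the vectors $u_i$ need not obviously be linearly independent, whereas your argument needs no such nondegeneracy); you correctly identify that the equality $\dim U=\dim V$ is the essential input for controlling $\|\proj_U|_{V^{\perp}}\|$. The paper's route, on the other hand, keeps the whole section's toolkit uniform, since Lemma~\ref{lem:GS} and the $\red$-basis formalism are reused in Proposition~\ref{prop:red-ang} and later in Sections~\ref{sec:tangent-planes} and~\ref{sec:improved-holder}.
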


\begin{proof}
  Without loss of generality, we can assume that $\vartheta < 1$. If $\vartheta
  \ge 1$ then we can set $C_{\pi} = 2$ and there is nothing to prove. Set $u_i =
  \proj_{U} \hat{v}_i$. Since $\langle \hat{v}_i, \hat{v}_j \rangle = 0$ for $i
  \ne j$, we have
  \begin{displaymath}
    \langle u_i, u_j \rangle
    = \langle \pperp_{U} \hat{v}_i, \pperp_{U} \hat{v}_j \rangle
    \quad \text{and} \quad
    \delta_i^j - \vartheta^2 \le |\langle u_i, u_j \rangle| \le \delta_i^j + \vartheta^2 \,,
  \end{displaymath}
  so $u_1,\ldots,u_m$ is a $\red$-basis with $\rho = 1$ and $\varepsilon =
  \vartheta^2$. From Lemma~\ref{lem:GS} there exists an~orthonormal basis
  $\hat{u}_1,\ldots,\hat{u}_m$ such that $|u_i - \hat{u}_i| \le C_{gs}
  \vartheta^2$. Hence $|\hat{v}_i - \hat{u}_i| \le C_{gs} \vartheta^2 +
  \vartheta \le (1+C_{gs})\vartheta$.

  We calculate
  \begin{align}
    \label{est:ortho-ortho}
    \dgras(U,V) &= \sup_{w \in \Sphere} |\pi_U(w) - \pi_V(w)|
    = \sup_{w \in \Sphere} \left| \sum_{i=1}^m \langle w, \hat{u}_i \rangle \hat{u}_i 
      - \langle w, \hat{v}_i \rangle \hat{v}_i \right| \\
    &\le \sup_{w \in \Sphere} \sum_{i=1}^m |\langle w, \hat{u}_i \rangle (\hat{u}_i - \hat{v}_i)|
      + |\langle w, (\hat{u}_i - \hat{v}_i) \rangle \hat{v}_i|
    \le 2 m (1+C_{gs}) \vartheta \,. \notag \qedhere
   \end{align}
\end{proof}

\begin{proof}[Proof of Proposition~\ref{prop:red-ang}]
  Dividing each $v_i$ by $\rho_0$, we get a $\red$-basis with $\rho = 1$. Hence
  we~can assume that $\rho_0 = 1$. Without loss of generality we may also assume
  that $\vartheta < 1$. Indeed, we always have the trivial estimate $\dgras(U,V)
  \le 2$, so if $\vartheta \ge 1$ we can set $C_{\red} = 2$.

  Let $\hat{v}_1,\ldots,\hat{v}_m$ be the~orthonormal basis given
  by~Lemma~\ref{lem:GS} applied to $v_1,\ldots,v_m$. Then
  \begin{displaymath}
    |\pperp_U \hat{v}_i|
    \le |\pperp_U(\hat{v}_i - v_i)| + |\pperp_U v_i|
    \le |\hat{v}_i - v_i| \dgras(U,V) + |v_i - u_i|
    \le C_{gs} \varepsilon_0 \dgras(U,V) + \vartheta
  \end{displaymath}
  for each $i = 1,\ldots,m$. We set $\ere = \ere(m) = \frac 12 (C_{\pi}
  C_{gs})^{-1}$ and we assume $\varepsilon_0 \le \ere$. Applying
  Lemma~\ref{lem:proj-ang} we obtain the estimate
  \begin{displaymath}
    \dgras(U,V) \le C_{\pi} C_{gs} \varepsilon_0 \dgras(U,V) + C_{\pi} \vartheta 
    \quad \iff \quad
    \dgras(U,V) \le \frac{C_{\pi}}{1 - C_{\pi} C_{gs} \varepsilon_0} \vartheta 
    \,. \qedhere
  \end{displaymath}
\end{proof}



\section{Geometric Morrey-Sobolev embedding}
\label{sec:first-reg}

In this section we prove Theorem~\ref{thm:regularity} which is a~geometric
counterpart of~the Morrey-Sobolev embedding $W^{2,p}(\R^k) \subseteq
C^{1,1-k/p}$ for $p > k$. We also give some examples of $m$-fine sets to which
Theorem~\ref{thm:regularity} applies.

\subsection{Proof of Theorem~\ref{thm:regularity}}
\label{sec:bdd-ene-flat}

\begin{prop}
  \label{prop:eta-d-balance}
  Let $l \in \{1,2,\ldots,m+2\}$ and $p > ml$. Assume $\Sigma \subseteq \R^n$
  satisfies \eqref{def:ahlfors} and also $\E_p^l(\Sigma) \le E < \infty$.
  Let~$T_0 = (x_0,\ldots,x_{m+1}) \in \Sigma^{m+2}$. If $\mathbf{T}_0 = \simp
  T_0$ is $(\eta,d)$-voluminous with $d \le R_{\ahl}$, then $\eta$ and~$d$ must
  satisfy
  \begin{equation}
    \label{est:eta-d}
    d \ge \left( \frac{C_{\eta d} A_{\ahl}^l}{E} \right)^{1/\lambda} \eta^{\kappa/\lambda}
    \quad \text{or equivalently} \quad
    \eta \le \left( \frac{E}{C_{\eta d} A_{\ahl}^l} \right)^{1/\kappa} d^{\lambda/\kappa} \,,
  \end{equation}
  where $C_{\eta d} = C_{\eta d}(m,l,p)$ is some constant, $\lambda = p - ml$
  and $\kappa = (p+ml)(m+1)$.
\end{prop}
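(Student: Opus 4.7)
The plan is to exploit one $(\eta,d)$-voluminous simplex $T_0$ as a template: the perturbation lemma (Proposition~\ref{prop:vol-close}) guarantees that if each vertex is moved by at most $\varsigma_{m+1}(\eta)\,d$, the resulting simplex retains a curvature lower bound of order $\eta^{m+1}/d$. Meanwhile, the local lower Ahlfors regularity ensures that each such perturbation ball intersected with $\Sigma$ carries $\HM^m$-mass of order $(\varsigma_{m+1}(\eta) d)^m$. Feeding these two facts into the definition of $\E_p^l$ produces a lower bound on $E$ in terms of $\eta$ and $d$, which rearranges to the desired inequality.

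Concretely, I would first apply Proposition~\ref{prop:vol-close} with $k = m+1$ and set $\varsigma = \varsigma_{m+1}(\eta)$: then for every $(y_0,\ldots,y_{m+1})$ with $|x_i - y_i| \le \varsigma d$ one has $\K(y_0,\ldots,y_{m+1}) \ge \tfrac{3\eta^{m+1}}{4(m+1)!\,d}$. The key observation is that the energy $\E_p^l$ only integrates over $l$ of the $m+2$ vertices, the remaining $m+2-l$ appearing under a supremum. I would therefore perturb only the first $l$ vertices and keep $x_l,\ldots,x_{m+1}$ fixed (so their ``perturbation'' is trivially zero), and bound the supremum from below by evaluating at this fixed tail. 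Restricting the integration domain to $\prod_{i=0}^{l-1}(\Sigma \cap \Ball(x_i,\varsigma d))$ and invoking \eqref{def:ahlfors} (which applies because $\varsigma d \le d \le R_{\ahl}$) yields
$$E \;\ge\; \left(\frac{3\eta^{m+1}}{4(m+1)!\,d}\right)^p \prod_{i=0}^{l-1} \HM^m\bigl(\Sigma \cap \Ball(x_i,\varsigma d)\bigr) \;\ge\; C\, A_{\ahl}^l\,\frac{\eta^{(m+1)p}}{d^p}\,(\varsigma d)^{ml}.$$

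To finish, I would plug in the asymptotic lower bound $\varsigma \ge C_{\varsigma}^{-1}\eta^{m+1}$ from \eqref{eq:sigma-asymp} and collect exponents: the power of $\eta$ becomes $(m+1)p + (m+1)ml = (m+1)(p+ml) = \kappa$, the power of $d$ becomes $ml - p = -\lambda$, and solving for $\eta$ gives \eqref{est:eta-d} with the appropriate constant $C_{\eta d}(m,l,p)$. The argument presents no genuine obstacle; the only points requiring care are the bookkeeping of exponents and verifying that $\varsigma < 1$ (which is part of \eqref{eq:sigma-asymp}) so that each perturbation ball sits inside $\Ball(x_i,R_{\ahl})$, where the Ahlfors lower bound is available.
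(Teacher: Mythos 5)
Your proposal is correct and follows essentially the same route as the paper's proof: restrict the integration in $\E_p^l$ to the product of balls $\Ball(x_i,\varsigma_{m+1}d)$, bound the supremum below by evaluating the tail at the fixed vertices $x_l,\ldots,x_{m+1}$ and invoking Proposition~\ref{prop:vol-close}, apply \eqref{def:ahlfors} to each factor, and use \eqref{eq:sigma-asymp} to collect the exponents into $\kappa$ and $\lambda$. The bookkeeping you describe matches the paper's computation exactly.
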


\begin{proof}
  We shall estimate the $\E_p^l$-energy of~$\Sigma$. Recall that
  $\varsigma_{m+1} \le \frac 14$ was defined by~\eqref{def:varsigma}.
  \begin{multline}
    \label{eq:energy-low}
    \infty > E \ge \E_p^l(\Sigma) 
    = \int_{\Sigma^l} \sup_{y_l,\ldots,y_{m+1} \in \Sigma} \K^p(y_0,\ldots,y_{m+1})\ d\HM^{ml}_{(y_0,\ldots,y_{l-1})} \\
    \ge \int_{\Sigma \cap \Ball(x_0,\varsigma_{m+1} d)} \cdots \int_{\Sigma \cap \Ball(x_{l-1},\varsigma_{m+1} d)}
    \sup_{y_l,\ldots,y_{m+1} \in \Sigma} \K^p(\simp(y_0,\ldots,y_{m+1}))\ d\HM^{ml}_{(y_0,\ldots,y_{l-1})} \,.
  \end{multline}
  Proposition~\ref{prop:vol-close} combined with Remark~\ref{rem:reg-curv}
  lets~us estimate the integrand
  \begin{displaymath}
    \sup_{y_l,\ldots,y_{m+1} \in \Sigma} \K^p(\simp(y_0,\ldots,y_{m+1})) 
    \ge \left( \frac{3\eta^{m+1}}{4(m+1)! d} \right)^p \,.
  \end{displaymath}
  Since~$\Sigma$ satisfies~\eqref{def:ahlfors}, we get a~lower bound
  on~the~measure of~the sets over which we integrate
  \begin{displaymath}
    \HM^m(\Sigma \cap \Ball(x_i,\varsigma_{m+1} d)) \ge A_{\ahl} (\varsigma_{m+1} d)^m \,.
  \end{displaymath}
  Plugging the last two estimates into \eqref{eq:energy-low} and recalling
  \eqref{eq:sigma-asymp} we obtain
  \begin{displaymath}
    E \ge \left( A_{\ahl} (\varsigma_{m+1} d)^m \right)^l
    \left( \frac{3\eta^{m+1}}{4(m+1)! d} \right)^p 
    = C_{\eta d}(m,l,p) A_{\ahl}^l d^{ml - p} \eta^{(p+ml)(m+1)} \,. \qedhere
  \end{displaymath}
\end{proof}

Proposition~\ref{prop:eta-d-balance} is interesting in~itself. It says that
whenever the energy of~$\Sigma$ is finite, we cannot have very small and
voluminous simplices with vertices on $\Sigma$. It gives a~bound on the
''regularity'' (i.e. parameter $\eta$) of~any simplex in~terms of~its diameter
$d$ and we see that $\eta$ goes to $0$ when we decrease $d$. Now we are ready to
prove Proposition~\ref{prop:beta-est}.

\begin{proof}[Proof of Proposition~\ref{prop:beta-est}]
  Fix some point $x \in \Sigma$ and a~radius $r \in (0,R_{\ahl})$. Let
  $\mathbf{T} = \simp T = \simp(x_0, \ldots, x_{m+1})$ be an $(m+1)$-simplex
  such that $x_i \in \Sigma \cap \CBall(x,r)$ for $i = 0,1,\ldots,m+1$ and such
  that $\mathbf{T}$ has maximal $\HM^{m+1}$-measure among all simplices with
  vertices in~$\Sigma \cap \CBall(x,r)$.
  \begin{displaymath}
    \HM^{m+1}(\mathbf{T}) = \max\{ \HM^{m+1}(\simp(x_0',\ldots,x_{m+1}')) : x_i' \in \Sigma \cap \CBall(x,r) \} \,.
  \end{displaymath}
  The existence of~such simplex follows from the fact that the set $\Sigma \cap
  \CBall(x,r)$ is compact and from the fact that the function $T \mapsto
  \HM^{m+1}(\simp T)$ is continuous with respect to $x_0$, \ldots, $x_{m+1}$.

  Rearranging the vertices of~$\mathbf{T}$ we can assume that $\hmin(\mathbf{T}) =
  \height_{m+1}(\mathbf{T})$, so the largest $m$-face of~$\mathbf{T}$ is
  $\simp(x_0,\ldots,x_m)$. Let $H = \opspan\{ x_1-x_0, \ldots, x_m-x_0 \}$, so
  that $x_0 + H$ contains the largest $m$-face of~$\mathbf{T}$. Note that the
  distance of~any point $y \in \Sigma \cap \CBall(x,r)$ from the affine plane
  $x_0 + H$ has to be less then or equal to $\hmin(\mathbf{T}) =
  \dist(x_{m+1},x_0+H)$. If we could find a~point $y \in \Sigma \cap
  \CBall(x,r)$ with $\dist(y,x_0+H) > \hmin(\mathbf{T})$, than the simplex
  $\simp(x_0,\ldots,x_m,y)$ would have larger $\HM^{m+1}$-measure than $\mathbf{T}$
  but this is impossible due to the choice of~$\mathbf{T}$.

  Since $x \in \Sigma \cap \CBall(x,r)$, we know that
  $\dist(x,x_0+H) \le \hmin(\mathbf{T})$, so we obtain
  \begin{equation}
    \label{est:beta-hmin}
    \forall y \in \Sigma \cap \CBall(x,r) \quad
    \dist(y,x+H) \le 2 \hmin(\mathbf{T}) \,.
  \end{equation}
  Now we only need to estimate $\hmin(\mathbf{T}) = \height_{m+1}(\mathbf{T})$ from
  above. Of~course $\mathbf{T}$ is $(\hmin(\mathbf{T})/(2r),2r)$-voluminous, so
  applying Proposition~\ref{prop:eta-d-balance} we obtain
  \begin{equation}
    \label{est:hmin-r}
    \frac{\hmin(\mathbf{T})}{2r} \le
    \left( \frac{E}{C_{\eta d} A_{\ahl}^l} \right)^{1/\kappa} (2r)^{\lambda/\kappa} \,.
  \end{equation}
  Putting~\eqref{est:beta-hmin} and~\eqref{est:hmin-r} together we get
  \begin{displaymath}
    \nbeta(x,r) \le \frac{2 \hmin(\mathbf{T})}{r}
    \le 4 \left( \frac{E}{C_{\eta d} A_{\ahl}^l} \right)^{1/\kappa} (2r)^{\lambda/\kappa}
    = C(m,l,p) \left( \frac{E}{A_{\ahl}^l} \right)^{1/\kappa} r^{\lambda/\kappa} \,.
    \qedhere
  \end{displaymath}
\end{proof}

Having Proposition~\ref{prop:beta-est} at our disposal we can easily prove
Theorem~\ref{thm:regularity}.

\begin{proof}[Proof of Theorem~\ref{thm:regularity}]
  We know already that $\nbeta(x,r) \le C(m,l,p,A_{\ahl},E) r^{\lambda/\kappa}$
  for $r < R_{\ahl}$. We assumed \eqref{def:theta-beta}, so $\Sigma$ is
  Reifenberg flat with vanishing constant. We~finish the~proof by~applying
  Proposition~\ref{prop:dkt-reg}.
\end{proof}

\subsection{Examples of $m$-fine sets}
\label{sec:fine-sets}

Here we give a~few examples of~$m$-fine sets.

\begin{ex}
  Let $M$ be any $m$-dimensional, compact, closed manifold of~class $C^1$ and
  let $f : M \to \R^n$ be an immersion. Then the image $\Sigma = \opim(f)$ is
  an~$m$-fine set. At~each point $x \in M$, there is a~radius $R_x$ such that
  the neighborhood $U_x \subseteq f^{-1}(\Ball(f(x),R_x))$ of~$x$ in~$M$ is
  mapped to the set $V_x = f(U_x) \subseteq \Ball(f(x),R_x)$ and is a~graph
  of~some Lipschitz function $\Phi_x : Df(x)T_xM \to (Df(x)T_xM)^{\perp}$. If
  we~choose $R_x$ small then we can make the Lipschitz constant of~$\Phi_x$
  smaller than some $\varepsilon > 0$. Due to compactness of~$M$ and continuity
  of~$Df$ we can find a~global radius $R_{\Sigma} = \min\{ R_x : x \in M
  \}$. Then we can safely set $A_{\ahl} = \sqrt{1 - \varepsilon^2}$ and
  $M_{\Sigma} = 4$.
\end{ex}

\begin{ex}
  Let $\Sigma$ be the van Koch snowflake in~$\R^2$. Then $\Sigma$ is $1$-fine
  but it fails to be $1$-dimensional.
\end{ex}

\begin{figure}[!htb]
  \centering
  \includegraphics{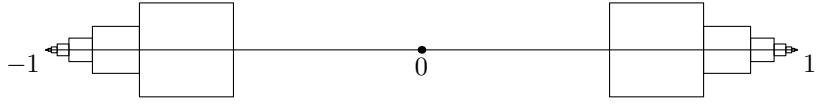}
  \caption{This set is $1$-fine despite the fact that it has boundary points.}
  \label{F:oscillation}
\end{figure}

\begin{ex}
  Let $m = 1$, $n = 2$ and
  \begin{displaymath}
    \Sigma = \bigcup_{k=1}^{\infty} (-Q_k) \cup \left\{
      (t,0) \in \R^2 : t \in [-1,1]
    \right\} \cup \bigcup_{k=1}^{\infty} Q_k \,,
  \end{displaymath}
  where
  \begin{displaymath}
    Q_0 = \partial \big([0,1] \times [0,1]\big)
    \quad \text{and} \quad
    Q_k = \Big( \sum_{j=1}^k 2^{-j}, -\tfrac 12 \Big) + 2^{-(k+1)} Q_0 \,.
  \end{displaymath}
  See Figure~\ref{F:oscillation} for a~graphical presentation. Condition
  \ref{def:theta-beta} holds at~the boundary points $(-1,0)$ and $(1,0)$
  of~$\Sigma$, because the $\beta$-numbers do not converge to zero with $r \to
  0$ at~these points. All the other points of~$\Sigma$ are internal points
  of~line segments or corner points of~squares, so at~these points condition
  \eqref{def:theta-beta} is also satisfied. Hence, $\Sigma$ is $1$-fine.

  This example shows that condition \eqref{def:theta-beta} does not exclude
  boundary points but at~any such boundary point we have to add some
  oscillation, to prevent $\beta$-numbers from getting too small. The same
  effect can be observed in~the following example
  \begin{displaymath}
    \Sigma = \partial \big([1,2] \times [-1,1]\big) \cup
    \overline{ \left\{ (x, x \sin( \tfrac 1x)) : x \in (0,1] \right\} } \,.
  \end{displaymath}
\end{ex}


\section{Uniform Ahlfors regularity - the proof of Theorem~\ref{thm:uahlreg}}
\label{sec:adm-sets}

Here we give the proof of~Theorem~\ref{thm:uahlreg}. First we introduce the
class of admissible sets, which is tailored for proving the existence of many
voluminous simplices (cf. Proposition~\ref{prop:big-proj-fat-simp}) with
vertices on~$\Sigma$. Proposition~\ref{prop:big-proj-fat-simp} is crucial in the
proof of~Theorem~\ref{thm:uahlreg}. In the end we also show how to make all the
emerging constants depend solely on $E$, $m$, $l$ and $p$.

\subsection{The class of admissible sets}

In~this section we introduce the definition of~the~class $\A(\delta,m)$
of~\emph{$(\delta,m)$-admissible} sets - here $\delta \in (0,1)$ is some
number. This definition is essentially the same
as~\cite[Definition~2.9]{1102.3642} but it is more convenient for~us to~impose
only local lower Ahlfors regularity~\eqref{def:ahlfors} instead of~Condition~H1
of~\cite[Definition~2.9]{1102.3642}.

\begin{defin}
  Let $I$ be a~countable set of~indices and assume there exist compact, closed,
  $m$-dimensional manifolds $M_i$ of~class $C^1$, a~set $Z$ with $\HM^m(Z) = 0$
  and continuous maps $f_i : M_i \to \R^n$ for $i \in I$, such that
  \begin{displaymath}
    \Sigma = \bigcup_{i \in I} f_i(M_i) \cup Z \,.
  \end{displaymath}
  Let $N$ be an~$(n-m)$-dimensional, compact, closed submanifold of $\R^n$.
  We~say that $\Sigma$ is \emph{linked with} $N$ and write $\oplk(\Sigma,N) =
  1$, if there exists an $i \in I$ such that the map
  \begin{displaymath}
    F : M_i \times N \to \Sphere^{n-1} \,, \quad
    F(w,z) = \frac{f_i(w) - z}{|f_i(w) - z|}
    \ \ \text{satisfies } \deg_2 F = 1 \,,
  \end{displaymath}
  where $\deg_2$ is the \emph{topological degree modulo~2}.
\end{defin}
For the definition of the~degree of a map we refer the reader
to~\cite[Chapter~5, \S~1]{MR0448362}.

\begin{defin}[cf.~\cite{1102.3642} Definition~2.9]
  \label{def:adm}
  Let $\delta \in (0,1)$ and let $I$ be a~countable set of~indices. Let $\Sigma$
  be a~compact subset of~$\R^n$ satisfying~\eqref{def:ahlfors}. We say that
  $\Sigma$ is $(\delta,m)$-\emph{admissible} and write $\Sigma \in \A(\delta,m)$
  if the following conditions are satisfied
  \begin{enumerate}[label=\texttt{A\arabic*}]
  \item \textbf{Mock tangent planes and flatness.}
    \label{adm:cone}
    There exists a~dense subset $\Sigma^* \subseteq \Sigma$ of full measure
    in~$\Sigma$ (i.e.~$\HM^m(\Sigma \setminus \Sigma^*) = 0$) such that for each
    $x \in \Sigma^*$ there is an $m$-plane $H = H_x \in G(n,m)$ and a~radius
    $r_0 = r_0(x) > 0$ such that
    \begin{displaymath}
      |\pperp_H(y-x)| < \delta |y-x| \quad \text{for each } y \in \Ball(x,r_0) \cap \Sigma \,.
    \end{displaymath}
  \item \textbf{Structure and linking.}
    \label{adm:str-link}
    There exist compact, closed, $m$-dimensional manifolds $M_i$ of~class $C^1$,
    a~set $Z$ with $\HM^m(Z) = 0$ and continuous maps $f_i : M_i \to \R^n$ for
    $i \in I$, such that
    \begin{displaymath}
      \Sigma = \bigcup_{i \in I} f_i(M_i) \cup Z 
    \end{displaymath}
    \begin{equation}
      \label{eq:adm:link}
      \text{and} \quad
      \forall x \in \Sigma^* \ \ 
      \oplk(\Sigma,\Slk_x) = 1 \ \ 
      \text{where}\ \  \Slk_x = \Sphere\left(x,\tfrac 12 r_0\right) \cap (x + H_x^{\perp}) \,.
    \end{equation}
  \end{enumerate}
\end{defin}

Condition \ref{adm:cone} ensures that at every point $x \in \Sigma^*$ one can
touch $\Sigma$ with an~apropriate cone. Condition \ref{adm:str-link} says that
at~each point of~$\Sigma$ there is a~sphere $\Slk_x$ which is linked with
$\Sigma$. This means intuitively, that we cannot move $\Slk_x$ far away from
$\Sigma$ without tearing one of~these sets. Example \ref{ex:flat} shows that
this condition is unavoidable for the theorems stated in~this paper to be true.

There are three especially useful properties of $\oplk$ that we want to use.

\begin{prop}[cf.~\cite{1102.3642}, Lemma 3.2]
  \label{prop:lk-htp-inv}
  Let $A \subseteq \R^n$ be a~$(\delta,m)$-admissible set and let $N$ be a
  compact, closed $(n-m-1)$-dimensional manifold of class $C^1$, and let $N_j =
  h_j(N)$ for $j=0,1$, where $h_j$ is a $C^1$ embedding of $N$ into $\R^n$ such
  that $N_j \cap \Sigma = \emptyset$. If there is a homotopy
  \begin{displaymath}
    G : N \times [0,1] \to \R^n \setminus \Sigma \,,
  \end{displaymath}
  such that $G(-,0) = h_0$ and $G(-,1) = h_1$, then 
  \begin{displaymath}
    \oplk(\Sigma,N_0) = \oplk(\Sigma,N_1) \,.
  \end{displaymath}
\end{prop}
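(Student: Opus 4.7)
The plan is to apply the homotopy invariance of the topological degree modulo~$2$ separately on each building block $f_i(M_i)$ of~$\Sigma$ supplied by condition~\ref{adm:str-link}. First I would fix the decomposition $\Sigma = \bigcup_{i \in I} f_i(M_i) \cup Z$ coming from admissibility and, for each $i \in I$ and $j = 0, 1$, define the continuous map
\begin{displaymath}
  F^i_j : M_i \times N \to \Sphere^{n-1} \,, \qquad
  F^i_j(w,z) = \frac{f_i(w) - h_j(z)}{|f_i(w) - h_j(z)|} \,.
\end{displaymath}
This is well defined since $f_i(M_i) \subseteq \Sigma$ and $h_j(N) \cap \Sigma = \emptyset$, so the denominator is strictly positive. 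Moreover $M_i \times N$ is a compact closed manifold of dimension $m + (n-m-1) = n-1$, which matches the dimension of the target sphere, so the degree $\deg_2 F^i_j$ is well defined.

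Next I would glue these two maps using the given homotopy~$G$: for each $i \in I$ set
\begin{displaymath}
  H^i : M_i \times N \times [0,1] \to \Sphere^{n-1} \,, \qquad
  H^i(w, z, t) = \frac{f_i(w) - G(z,t)}{|f_i(w) - G(z,t)|} \,.
\end{displaymath}
Since $G(z,t) \in \R^n \setminus \Sigma$ and $f_i(w) \in \Sigma$, the denominator never vanishes, so $H^i$ is a genuine continuous homotopy between $F^i_0$ and $F^i_1$. Homotopy invariance of~$\deg_2$ for continuous maps between compact closed manifolds of equal dimension (see e.g.~\cite{MR0448362}) then yields $\deg_2 F^i_0 = \deg_2 F^i_1$ for every $i \in I$.

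Finally, recalling that by definition $\oplk(\Sigma, N_j) = 1$ precisely when there exists some $i \in I$ with $\deg_2 F^i_j = 1$, the equality of degrees above gives $\oplk(\Sigma, N_0) = \oplk(\Sigma, N_1)$ directly (both take values in $\{0,1\}$). No hard step arises: the geometric content of admissibility (the cone condition~\ref{adm:cone} and the linking of mock tangent spheres) plays no role here beyond providing the structural decomposition~\ref{adm:str-link} that lets us define the maps~$F^i_j$ at all. The only point one has to verify carefully is the well-definedness and continuity of~$H^i$ on all of $M_i \times N \times [0,1]$, and this follows immediately from the hypothesis $G(N \times [0,1]) \subseteq \R^n \setminus \Sigma$.
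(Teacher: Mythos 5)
Your argument is correct and is exactly the standard proof (the one used in the cited reference \cite{1102.3642}): the homotopy $G$ induces a homotopy $H^i$ of the Gauss-type maps, and homotopy invariance of $\deg_2$ gives $\deg_2 F^i_0=\deg_2 F^i_1$ for each $i$, hence equality of the linking numbers. The only point you pass over silently -- that $F^i_j$ differs from the map in the definition of $\oplk(\Sigma,N_j)$ by precomposition with the diffeomorphism $\opid_{M_i}\times h_j$, which does not change $\deg_2$ -- is harmless.
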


\begin{prop}[cf.~\cite{1102.3642}, Lemma 3.4]
  \label{prop:lk-far}
  Let $\Sigma \subseteq \R^n$ be a~$(\delta,m)$-admissible set. Chose $y \in
  \R^n$ and $\varepsilon \in \R$ such that $0 < \varepsilon < r < 2 \varepsilon$
  and $\dist(y,\Sigma) \ge 3 \varepsilon$. Then
  \begin{displaymath}
    \oplk(\Sigma,\Sphere(y,r) \cap (y+V)) = 0
  \end{displaymath}
  for each $V \in G(n,n-m)$.
\end{prop}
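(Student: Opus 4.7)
The plan is to show that for every $i \in I$ the map $F \colon M_i \times N \to \Sphere^{n-1}$ given by $F(w,z) = (f_i(w) - z)/|f_i(w) - z|$, with $N = \Sphere(y,r) \cap (y+V)$, has $\deg_2 F = 0$; the definition of $\oplk$ then yields $\oplk(\Sigma, N) = 0$. Fix an arbitrary $i \in I$. Since $N \subseteq \CBall(y,r)$ stays at distance at least $3\varepsilon - r > \varepsilon$ from $\Sigma$, we have $f_i(w) \ne z$ for all $w \in M_i$ and $z \in N$, so $F$ is well-defined.

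The core step is to deform $F$ inside $\Sphere^{n-1}$ to a map independent of $z$. I would use the straight-line contraction of $N$ toward $y$, that is the homotopy
\begin{displaymath}
G(w, z, t) = \frac{f_i(w) - \bigl((1-t)z + t y\bigr)}{\bigl|f_i(w) - \bigl((1-t)z + t y\bigr)\bigr|}, \qquad (w,z,t) \in M_i \times N \times [0,1].
\end{displaymath}
The crucial check is that the denominator never vanishes: the interpolating point $(1-t)z + ty$ lies on the segment from $z$ to $y$, hence in $\CBall(y, r) \subseteq \CBall(y, 2\varepsilon)$, so the assumption $\dist(y, \Sigma) \ge 3\varepsilon$ keeps it at distance at least $\varepsilon$ from $\Sigma$, in particular it differs from every $f_i(w)$. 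Thus $G$ is a continuous $\Sphere^{n-1}$-valued homotopy from $F = G(\cdot,\cdot,0)$ to $F'(w,z) = (f_i(w) - y)/|f_i(w) - y|$, and homotopy invariance of $\deg_2$ yields $\deg_2 F = \deg_2 F'$.

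It remains to verify $\deg_2 F' = 0$, which follows because $F'$ does not depend on the second variable. If $\dim N = n - m - 1 \ge 1$, the differential of $F'$ vanishes along the $N$-directions, so its rank is at most $m < n-1$; every value of $F'$ is then a critical value, and Sard's theorem furnishes regular values of $F'$ with empty preimage, giving $\deg_2 F' = 0$. In the borderline case $\dim N = 0$, i.e.\ $m = n-1$, the set $N$ consists of exactly two points, so $M_i \times N$ is a disjoint union of two copies of $M_i$ on which $F'$ agrees with $F_y(w) = (f_i(w) - y)/|f_i(w) - y|$; hence $\deg_2 F' \equiv 2\deg_2 F_y \equiv 0 \pmod{2}$.

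The only delicate point is ensuring the homotopy $G$ stays away from $\Sigma$, and this is exactly the quantitative role of the hypotheses $0 < \varepsilon < r < 2\varepsilon$ and $\dist(y,\Sigma) \ge 3\varepsilon$; the rest is a standard degree argument and the whole proof bypasses Proposition~\ref{prop:lk-htp-inv}, which would only allow homotopies through embeddings and would not obviously contract $N$ to a point within $\R^n \setminus \Sigma$.
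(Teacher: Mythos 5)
Your argument is correct. Note that the paper itself does not prove Proposition~\ref{prop:lk-far}; it only cites \cite{1102.3642}, Lemma~3.4, so there is no in-text proof to compare against --- but your write-up is a sound, self-contained substitute. The two quantitative checks are right: every point of the segment $[z,y]$ lies in $\CBall(y,r)$, hence at distance at least $3\varepsilon - r > \varepsilon$ from $\Sigma$, so the straight-line homotopy $G$ never meets $\Sigma$ and $\deg_2 F = \deg_2 F'$; and you correctly treat the codimension-one case $m = n-1$ separately, where the rank argument fails but $N$ consists of two points and the degree doubles, hence vanishes mod~$2$. You are also right that this route deliberately avoids Proposition~\ref{prop:lk-htp-inv}, which only permits homotopies through embeddings of $N$ and so cannot be used to collapse $N$ to the point $y$. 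One small technicality to smooth over: the maps $f_i$ in Definition~\ref{def:adm} are merely continuous, so $F'$ need not be differentiable and Sard's theorem cannot be applied to it directly. The fix is standard and one line: $F' = g_i \circ \pi_{M_i}$ with $g_i(w) = (f_i(w)-y)/|f_i(w)-y|$; replace $g_i$ by a homotopic smooth map $\tilde g_i : M_i \to \Sphere^{n-1}$ (mod-$2$ degree of a continuous map is defined through such approximations anyway), and then $\tilde g_i \circ \pi_{M_i}$ has rank at most $m < n-1$ everywhere, so Sard yields a regular value with empty preimage and $\deg_2 F' = 0$.
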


\begin{prop}[cf.~\cite{1102.3642}, Lemma 3.5]
  \label{prop:intpoint}
  Let $\Sigma \subseteq \R^n$ be a~$(\delta,m)$-admissible set. Assume that for
  some $y \in \R^n$, $r > 0$ and $V \in G(n,n-m)$ we have
  \begin{displaymath}
    \oplk(\Sigma,\Sphere(y,r) \cap (y+V)) = 1 \,.
  \end{displaymath}
  Then the disk $\Ball(y,r) \cap (y+V)$ contains at least one point of $\Sigma$.
\end{prop}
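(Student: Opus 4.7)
The plan is to argue by contradiction, exploiting the classical topological principle that a continuous map $F : \partial W \to \Sphere^{n-1}$ from the boundary of a compact $n$-manifold with boundary has vanishing mod-$2$ degree whenever it extends continuously across the bulk~$W$. So assume toward contradiction that $\Ball(y,r) \cap (y+V) \cap \Sigma = \emptyset$. By~\ref{adm:str-link} we may write $\Sigma = \bigcup_{i \in I} f_i(M_i) \cup Z$; pick $i_0 \in I$ witnessing $\oplk(\Sigma,N) = 1$, where $N = \Sphere(y,r) \cap (y+V)$. By the very definition of $\oplk$, the map
\begin{displaymath}
 F_{i_0} : M_{i_0} \times N \to \Sphere^{n-1}, \qquad F_{i_0}(w,z) = \frac{f_{i_0}(w)-z}{|f_{i_0}(w)-z|},
\end{displaymath}
is well defined -- so $f_{i_0}(M_{i_0}) \cap N = \emptyset$ -- and satisfies $\deg_2 F_{i_0} = 1$. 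Since $f_{i_0}(M_{i_0}) \subseteq \Sigma$, the contradiction hypothesis combined with $f_{i_0}(M_{i_0}) \cap N = \emptyset$ yields $f_{i_0}(M_{i_0}) \cap \overline D = \emptyset$, where $\overline D = \CBall(y,r) \cap (y+V)$ is the closed $(n-m)$-disk whose boundary is~$N$.

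Because $f_{i_0}(w) \ne z$ for all $(w,z) \in M_{i_0} \times \overline D$, the very same formula defines a continuous extension $\tilde F_{i_0} : M_{i_0} \times \overline D \to \Sphere^{n-1}$ of $F_{i_0}$. Note that $M_{i_0} \times \overline D$ is a compact $n$-manifold whose boundary is exactly $M_{i_0} \times N$, so the long exact sequence of the pair $(M_{i_0} \times \overline D,\, M_{i_0} \times N)$ puts $[M_{i_0} \times N] \in H_{n-1}(M_{i_0} \times N;\Z/2)$ in the image of the connecting homomorphism, whence it vanishes after inclusion into $H_{n-1}(M_{i_0} \times \overline D;\Z/2)$. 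Applying $(\tilde F_{i_0})_*$ gives $\deg_2 F_{i_0} = 0$, contradicting $\deg_2 F_{i_0} = 1$.

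The only nontrivial ingredient is the cited homological principle, which is entirely standard. A more geometric alternative based on Propositions~\ref{prop:lk-htp-inv} and~\ref{prop:lk-far} -- contract $N$ radially through $\overline D$ to a small sphere at $y$ and then transport it to a point $y^*$ far from $\Sigma$ so that Proposition~\ref{prop:lk-far} forces $\oplk(\Sigma,N)=0$ -- would be less economical because Proposition~\ref{prop:lk-htp-inv} demands the entire homotopy to lie in $\R^n \setminus \Sigma$, whereas the contradiction hypothesis only guarantees that the \emph{open} disk avoids $\Sigma$ (the sphere $N$ itself could meet $\Sigma \setminus f_{i_0}(M_{i_0})$), and one would further need to control a transporting path inside $\R^n \setminus \Sigma$. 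The direct disk-extension argument above sidesteps both issues because it only uses disjointness of $f_{i_0}(M_{i_0})$ from $\overline D$.
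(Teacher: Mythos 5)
Your proof is correct, and it is essentially the argument that the paper itself defers to by citing \cite{1102.3642}, Lemma~3.5: if the open disk missed $\Sigma$, then $f_{i_0}(M_{i_0})$ would miss the closed disk $\CBall(y,r)\cap(y+V)$ entirely (since well-definedness of $F_{i_0}$ already forces $f_{i_0}(M_{i_0})\cap N=\emptyset$), the Gauss-type map would extend over $M_{i_0}\times\overline{D}$, and the mod-$2$ degree of its boundary restriction would vanish, contradicting $\oplk(\Sigma,N)=1$. Your closing remark about why the homotopy-invariance route via Propositions~\ref{prop:lk-htp-inv} and~\ref{prop:lk-far} is more delicate here is also accurate.
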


\begin{ex}
  \label{ex:mfld}
  Let $\Sigma$ be any closed, compact, $m$-dimensional submanifold of~$\R^n$
  of~class $C^1$. Then $\Sigma \in \A(\delta,m)$ for any $\delta \in (0,1)$.

  It is easy to verify that $\Sigma \in \A(\delta,m)$. Take $M_1 = \Sigma$ and
  $f_1 = \opid_{M_1}$. The set $Z$ will be empty, so $\Sigma^* =
  \Sigma$. At~each point $x \in \Sigma$ we set $H_x$ to be the tangent space
  $T_x\Sigma$. Small spheres centered at~$x \in \Sigma$ and contained in~$x +
  H_x^{\perp}$ are linked with $\Sigma$; for the proof see
  e.g. \cite[pp. 194-195]{MR898008}.  Note that we do not assume orientability;
  that is why we used degree modulo $2$.
\end{ex}

\begin{ex}
  \label{ex:union-mfld}
  Let $\Sigma = \bigcup_{i=1}^N \Sigma_i$, where $\Sigma_i$ are closed, compact,
  $m$-dimensional submanifolds of~$\R^n$ of~class $C^1$. Moreover assume that
  these manifolds intersect only on sets of~zero $m$-dimensional Hausdorff
  measure, i.e.
  \begin{displaymath}
    \HM^m(\Sigma_i \cap \Sigma_j) = 0 \quad \text{for } i \ne j \,.
  \end{displaymath}
  Then $\Sigma \in \A(\delta,m)$ for any $\delta \in (0,1)$.
\end{ex}

\begin{rem}
  Any $C^1$-manifold is $(\delta,m)$-admissible (cf. Example~\ref{ex:mfld}) for
  any $\delta \in (0,1)$, hence any $m$-fine set with finite $\E_p^l$-energy for
  some $p > ml$ is also $(\delta,m)$-admissible.

  It turns out that any $(\delta,m)$-admissible set with finite $\E_p^l$-energy
  for some $p > ml$ is also $m$-fine. We will not use this fact in this article.
  The proof for the $\E_p^{m+2}$-energy can be found
  in~\cite[Theorem~2.13]{slawek-phd}.

  If we do not assume finiteness of~the $\E_p^l$-energy then these two classes
  of sets are different and none of them is contained in the other.
\end{rem}

\begin{ex}
  Let 
  \begin{displaymath}
    \Sigma = \big( [0,1] \times \{ 0 \} \big)
    \cup \big( \{ 1 \} \times [0,1] \big)
    \cup \big( \{ (x,x^2) : x \in [0,1] \} \big) \subseteq \R^2 \,.
  \end{displaymath}
  Then $\Sigma$ is $(\delta,1)$-admissible for any $\delta \in (0,1)$ but it is
  not $1$-fine. It does not satisfy \eqref{def:theta-beta} at the points $(0,0)$
  and $(1,1)$.
\end{ex}

Now we give some negative examples showing the role
of~condition~\ref{adm:str-link}.

\begin{ex}
  \label{ex:flat}
  Let $H \in G(n,m)$ and let $\Sigma = \proj_H(\Sphere) = \Ball \cap H$. Then
  $\Sigma$ satisfies \eqref{def:ahlfors} and~condition~\ref{adm:cone} but it
  does not satisfy \ref{adm:str-link}. Hence, it is not admissible. Although
  $\Sigma$ is a~compact, $m$-dimensional submanifold of~$\R^n$ of~class $C^1$,
  it is not closed.
\end{ex}

\begin{ex}
  \label{ex:decomp}
  Let $\Sigma = \Sphere \cap \R^{m+1}$. Of~course $\Sigma$ is admissible as it
  falls into the case presented in~Example~\ref{ex:mfld}. We want to emphasize
  that there are good and bad decompositions of~$\Sigma$ into the sum $\bigcup
  f_i(M_i)$ from condition \ref{adm:str-link}.

  The easiest one and the best one is to set $M_1 = \Sigma$ and $f_1 =
  \opid_{M_1}$. But there are other possibilities. Set $M_1 = \Sphere \cap
  \R^{m+1}$ and $M_2 = \Sphere \cap \R^{m+1}$ and set
  \begin{align*}
    f_1(x_1,\ldots,x_{m+1}) &= (x_1,\ldots,x_m,|x_{m+1}|) \,, \\
    f_2(x_1,\ldots,x_{m+1}) &= (x_1,\ldots,x_m,-|x_{m+1}|) \,,
  \end{align*}
  so that $f_1$ maps $M_1$ to the upper hemisphere and $f_2$ maps $M_2$ to the
  lower hemisphere. This decomposition is bad, because~\eqref{eq:adm:link} is
  not satisfied at~any point.
\end{ex}

\subsection{Homotopies inside cones}

In~this section we prove a few useful facts about cones. In~the proof
of~Proposition~\ref{prop:big-proj-fat-simp} we construct a~set $F$ by glueing
conical caps together. Then we need to know that we can deform one sphere lying
in~$F$ to some other sphere lying in~$F$ without leaving $F$. To be able to do
this easily we need Propositions~\ref{prop:sph-trans} and~\ref{prop:two-cones}.

\begin{defin}
  \label{def:grass-cone}
  Let $H \in G(n,m)$ be an $m$-dimensional subspace of~$\R^n$ and let $\delta
  \in (0,1)$ be some number. We define the set
  \begin{displaymath}
    \G(\delta,H) =
    \{
    V \in G(n,n-m) : \forall v \in V \,\, |\pperp_H(v)| \ge \delta |v|
    \} \,.
  \end{displaymath}
\end{defin}

In~other words $V \in \G(\delta,H)$ if and only if $V$ is contained in~the cone
$C(\delta,H)$. If $n = 3$ and $m = 1$ then $H$ is a~line in~$\R^3$ and the cone
$C(\delta,H)$ contains all the $2$-dimensional planes $V$ such that
$\sin(\opang(H,V)) \ge \delta$.

\begin{prop}[cf.~\cite{slawek-phd} Proposition~4.2]
  \label{prop:grass-path}
  For any two spaces $U$ and $V$ in~$\G(\delta,H)$ there exists a~continuous
  path $\gamma : [0,1] \to \G(\delta,H)$ such that $\gamma(0) = V$ and
  $\gamma(1) = U$.
\end{prop}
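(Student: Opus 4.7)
The plan is to identify $\G(\delta,H)$ with a convex subset of the vector space $L(H^\perp,H)$ of linear maps from $H^\perp$ to $H$, and then use the straight-line homotopy between any two such maps.

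First I would observe that for every $V \in \G(\delta,H)$, the restriction $\pperp_H|_V : V \to H^\perp$ is injective. Indeed, if $v \in V \setminus \{0\}$ then $|\pperp_H(v)| \ge \delta |v| > 0$. Since $\dim V = n-m = \dim H^\perp$, this restriction is a linear isomorphism. Hence there is a unique linear map $L_V : H^\perp \to H$ such that $V = \{L_V(w) + w : w \in H^\perp\}$, namely $L_V(w) = \proj_H\bigl((\pperp_H|_V)^{-1}(w)\bigr)$. The decomposition $v = L_V(w) + w$ with $w = \pperp_H(v)$ gives $|v|^2 = |L_V(w)|^2 + |w|^2$, so the condition $|\pperp_H(v)| \ge \delta |v|$ is equivalent to
\begin{displaymath}
  \|L_V\| \le \kappa_\delta := \sqrt{\tfrac{1-\delta^2}{\delta^2}}.
\end{displaymath}
Conversely, any linear map $L : H^\perp \to H$ with $\|L\| \le \kappa_\delta$ gives rise to an element $\mathrm{graph}(L) \in \G(\delta,H)$. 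Thus the assignment $V \mapsto L_V$ is a bijection between $\G(\delta,H)$ and the closed ball $\overline{B}(0,\kappa_\delta) \subseteq L(H^\perp,H)$.

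Next I would verify continuity of this identification with respect to the metric $\dgras$. The map $L \mapsto \mathrm{graph}(L)$ is continuous because $\proj_{\mathrm{graph}(L)}$ can be written as a rational expression in the matrix entries of $L$ (using, e.g., $\proj_{\mathrm{graph}(L)} = A(A^T A)^{-1} A^T$ where $A : H^\perp \to \R^n$ sends $w$ to $L(w)+w$, and $A^T A = I + L^T L$ is invertible). In the other direction, $V \mapsto L_V$ is continuous because $L_V = \proj_H \circ (\pperp_H|_V)^{-1}$ where $(\pperp_H|_V)^{-1}$ can be recovered from $\proj_V$ by restricting $\proj_V$ to $H^\perp$ and inverting its projection to $H^\perp$, which depends continuously on $\proj_V$ provided $\pperp_H|_V$ stays uniformly bounded below — which it does, by the defining condition of $\G(\delta,H)$.

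Having the identification with a convex set, the proof finishes immediately: given $U, V \in \G(\delta,H)$, define
\begin{displaymath}
  \gamma(t) = \mathrm{graph}\bigl((1-t) L_V + t L_U\bigr), \qquad t \in [0,1].
\end{displaymath}
Since $\|(1-t)L_V + t L_U\| \le (1-t)\kappa_\delta + t\kappa_\delta = \kappa_\delta$, each $\gamma(t)$ lies in $\G(\delta,H)$, and continuity of $\gamma$ follows from the continuity of the graph map. The only step requiring real care is the continuity of $V \mapsto L_V$ in the Grassmannian metric; this is the main (and only) technical point, but it reduces to a routine computation once one writes $L_V$ in terms of the projection $\proj_V$ and uses that $\pperp_H|_V$ is uniformly invertible on $\G(\delta,H)$.
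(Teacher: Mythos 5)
Your proof is correct and complete. The paper itself does not contain a proof of this proposition (it defers to Section~4.1.1 of the cited thesis), so there is nothing in-paper to compare against line by line; but your route --- identifying $\G(\delta,H)$ with the closed ball of radius $\sqrt{1-\delta^2}/\delta$ in $L(H^{\perp},H)$ via the graph parametrization, and using convexity of that ball --- is a clean and standard way to get path-connectedness, and every step checks out: $\pperp_H|_V$ is injective hence an isomorphism by dimension count, the orthogonal decomposition $|v|^2=|L_V(w)|^2+|w|^2$ gives the exact norm bound $\|L_V\|\le\kappa_\delta$ in both directions, the segment $(1-t)L_V+tL_U$ stays in the ball by the triangle inequality, and continuity of $L\mapsto\proj_{\mathrm{graph}(L)}=A(A^TA)^{-1}A^T$ with $A^TA=I+L^TL$ invertible is a routine rational-dependence argument. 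One small remark: for the statement as given you only need continuity of $L\mapsto\mathrm{graph}(L)$, since $\gamma(0)=\mathrm{graph}(L_V)=V$ and $\gamma(1)=U$ hold by construction; the continuity of the inverse map $V\mapsto L_V$, which you flag as the main technical point, is not actually required (though your sketch of it via the uniform lower bound on $\pperp_H|_V$ is also fine). As a sanity check for how the proposition is used later, note that your path is even smooth in $t$, which is more than enough for the lifting to $O(n)$ in Corollary~\ref{cor:path-lift}.
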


\begin{cor}[cf.~\cite{slawek-phd} Corollary~4.3]
  \label{cor:path-lift}
  The path $\gamma$ from Proposition~\ref{prop:grass-path} lifts to a~continuous
  path $\tilde{\gamma} : [0,1] \to O(n)$ in~the orthogonal group.
\end{cor}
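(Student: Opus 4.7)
The plan is to exploit the fact that the natural projection $p : O(n) \to G(n,n-m) = O(n)/(O(n-m) \times O(m))$ is a principal $(O(n-m) \times O(m))$-bundle, hence in particular a locally trivial fiber bundle (this is the viewpoint already taken in the paper when introducing $G(n,m)$). Since fiber bundles enjoy the homotopy lifting property with respect to arbitrary spaces, in particular the path lifting property applies to any map from $[0,1]$. Therefore, once we fix any $\tilde{\gamma}(0) \in p^{-1}(\gamma(0)) \subseteq O(n)$, the path $\gamma : [0,1] \to G(n,n-m)$ admits a unique continuous lift $\tilde{\gamma} : [0,1] \to O(n)$ with $p \circ \tilde{\gamma} = \gamma$. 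The fact that $\gamma$ actually takes values in the subset $\G(\delta,H)$ plays no role; we only use that the target is $G(n,n-m)$.

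If one prefers a hands-on construction, the following argument works. Local continuous sections of $p$ can be produced by a Gram--Schmidt procedure: given $V_0 \in G(n,n-m)$, on the open neighborhood $U_{V_0}$ of subspaces $V$ for which $\proj_V|_{V_0}$ is an isomorphism, projecting a fixed orthonormal basis of $V_0$ onto $V$ and orthonormalizing (by the very argument already used in Lemma~\ref{lem:GS}) gives a continuously varying orthonormal basis of $V$; doing the same for $V^{\perp}$ yields a continuous local section $\sigma_{V_0} : U_{V_0} \to O(n)$. Since $\gamma([0,1])$ is compact, cover it by finitely many such neighborhoods $U_1,\ldots,U_N$ and pick a partition $0 = t_0 < t_1 < \cdots < t_N = 1$ with $\gamma([t_{j-1},t_j]) \subseteq U_{i(j)}$. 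Define $\tilde{\gamma}$ inductively: on $[t_{j-1},t_j]$ set $\tilde{\gamma}(t) = \sigma_{i(j)}(\gamma(t)) \cdot g_j$, choosing $g_j \in O(n-m) \times O(m)$ so that $\tilde{\gamma}(t_{j-1})$ matches the value already defined on $[0,t_{j-1}]$. Such a $g_j$ exists because both elements lie in the same fiber $p^{-1}(\gamma(t_{j-1}))$, on which $O(n-m) \times O(m)$ acts transitively.

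The only step that requires any genuine verification is the local triviality of $p$, equivalently the existence of local continuous sections; but this is precisely the Gram--Schmidt construction sketched above, and so it is not really an obstacle. Everything else is a routine gluing argument on the compact interval $[0,1]$.
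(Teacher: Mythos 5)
Your proof is correct and follows essentially the route the paper itself points to (it cites Hatcher's Examples 4.53--4.55 for the bundle structure and defers details to \cite[Section~4.1.1]{slawek-phd}): path lifting for the fiber bundle $O(n) \to O(n)/(O(m)\times O(n-m))$, with Gram--Schmidt local sections supplying local triviality and a finite gluing over a partition of $[0,1]$. One small correction: the lift is \emph{not} unique, since the fibers $O(m)\times O(n-m)$ are positive-dimensional (uniqueness of path lifts requires discrete fibers, as for covering spaces); only existence is needed, so this does not affect the argument.
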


The proofs can be found in~\cite[Section~4.1.1]{slawek-phd}

\begin{cor}
  \label{cor:sph-in-cone}
  Let $H$ and $\delta$ be as in~Proposition~\ref{prop:grass-path}. Let $S_1$ and
  $S_2$ be two round spheres centered at~the origin, contained in~the conical
  cap $\Cone(\delta,H,\rho_1,\rho_2)$ and of~the same dimension
  $(n-m-1)$. Moreover assume that $0 \le \rho_1 < \rho_2 $.  There exists an
  isotopy
  \begin{displaymath}
    F : S_1 \times [0,1] \to \Cone(\delta,H,\rho_1,\rho_2) \,,
  \end{displaymath}
  \begin{displaymath}
    \text{such that} \quad
    F(-,0) = \opid_{S_1}
    \qquad \text{and} \qquad
    F(S_1 \times \{1\}) = S_2 \,. 
  \end{displaymath}
\end{cor}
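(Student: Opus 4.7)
The plan is to construct the isotopy in two stages: first rotate $S_1$ through a family of $(n{-}m{-}1)$-spheres of the same radius lying in varying $(n{-}m)$-planes, landing in the linear span of $S_2$; then rescale radially inside that span to match $S_2$. Observe first that each $S_i$ spans a unique subspace $V_i \in G(n, n-m)$ and has a definite radius $r_i \in (\rho_1, \rho_2)$. Because $\Cone(\delta, H)$ is invariant under positive scalar multiplication and $S_i \subseteq \Cone(\delta, H)$, the whole subspace satisfies $V_i \setminus \{0\} \subseteq \Cone(\delta, H)$, so $V_i \in \G(\delta, H)$.

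\emph{First stage (rotation).} By Proposition~\ref{prop:grass-path} pick a continuous path $\gamma : [0,1] \to \G(\delta, H)$ with $\gamma(0) = V_1$ and $\gamma(1) = V_2$; by Corollary~\ref{cor:path-lift} lift it to a continuous path $\tilde{\gamma} : [0,1] \to O(n)$. Replacing $\tilde{\gamma}(t)$ by $\tilde{\gamma}(t) \circ \tilde{\gamma}(0)^{-1}$ we may assume $\tilde{\gamma}(0) = \opid_{\R^n}$ while retaining $\tilde{\gamma}(t)(V_1) = \gamma(t)$. Define $F_1(x,t) := \tilde{\gamma}(t)(x)$ on $S_1 \times [0,1]$. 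Since $\tilde{\gamma}(t)$ is orthogonal, $F_1(-,t)$ embeds $S_1$ onto a round sphere of the same radius $r_1$ lying in $\gamma(t) \in \G(\delta, H)$, hence in $\Cone(\delta, H)$; as every point of this sphere has norm $r_1 \in (\rho_1, \rho_2)$, the image lies in $\Cone(\delta, H, \rho_1, \rho_2)$.

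\emph{Second stage and concatenation.} Set $S_1' := F_1(S_1, 1) \subseteq V_2$, a round sphere of radius $r_1$. For $(y, t) \in S_1' \times [0,1]$ define $F_2(y, t) := \bigl(1 + t(r_2/r_1 - 1)\bigr)\, y$; each $F_2(-,t)$ is a homothety onto a round sphere in $V_2$ whose radius varies monotonically between $r_1$ and $r_2$, so it stays inside $\Cone(\delta, H, \rho_1, \rho_2)$, and $F_2(-,1)$ maps onto $S_2$. Concatenate by
\[
F(x, t) := \begin{cases} F_1(x, 2t) & t \in [0, \tfrac 12], \\ F_2(F_1(x, 1), 2t - 1) & t \in [\tfrac 12, 1]. \end{cases}
\]
Then $F$ is continuous, each $F(-,t)$ is a composition of embeddings (orthogonal or homothetic), $F(-,0) = \opid_{S_1}$, $F(S_1 \times \{1\}) = S_2$, and the image remains in $\Cone(\delta, H, \rho_1, \rho_2)$. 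The only substantive ingredient is the continuous lift of the Grassmannian path to $O(n)$, which is precisely Corollary~\ref{cor:path-lift}; the rest is bookkeeping.
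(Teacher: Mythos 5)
Your proof is correct and follows essentially the same route as the paper: identify the spanning subspaces $V_1, V_2 \in \G(\delta,H)$, rotate via the lift of a Grassmannian path from Proposition~\ref{prop:grass-path} and Corollary~\ref{cor:path-lift}, then rescale the radius inside $V_2 \cap \Shell(\rho_1,\rho_2)$. The only difference is that you spell out the scale-invariance argument for $V_i \in \G(\delta,H)$ and the explicit concatenation, which the paper leaves implicit.
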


\begin{proof}
  Let $r_1$ and $r_2$ be the radii of~$S_1$ and $S_2$ respectively. We have
  $\rho_1 < r_1,r_2 < \rho_2$. Let $V_1,V_2 \in G(n,n-m)$ be the two subspaces
  of~$\R^n$ such that $S_1 \subseteq V_1$ and $S_2 \subseteq V_2$. In~other
  words $S_1 = \Sphere_{r_1} \cap V_1$ and $S_2 = \Sphere_{r_2} \cap
  V_2$. Because $S_1$ and $S_2$ are subsets of~$\Cone(\delta,H)$, we know that
  $V_1$ and $V_2$ are elements of~$\G(\delta,H)$. From
  Proposition~\ref{prop:grass-path} we get a~continuous path $\gamma$ joining
  $V_1$ with $V_2$. By Corollary~\ref{cor:path-lift}, this path lifts to a~path
  $\tilde{\gamma}$ in~the orthogonal group $O(n)$. For $z \in S_1$ and $t \in
  [0,1]$ we set
  \begin{align*}
    F(z,t) &= \tilde{\gamma}(t) \tilde{\gamma}(0)^{-1} z \,.
  \end{align*}
  This gives a~continuous deformation of~$S_1 = \Sphere_{r_1} \cap V_1$ into
  $\Sphere_{r_1} \cap V_2$. Now, we only need to adjust the radius but this can
  be easily done inside $V_2 \cap \Shell(\rho_1,\rho_2)$ so the corollary
  is~proven.
\end{proof}

\begin{prop}
  \label{prop:sph-trans}
  Let $H \in G(n,m)$. Let $S$ be a~sphere perpendicular to $H$, meaning that $S
  = \Sphere(x,r) \cap (x + H^{\perp})$ for some $x \in H$ and $r > 0$. Assume
  that $S$ is contained in~the conical cap $\Cone(\delta,H,\rho_1,\rho_2)$,
  where $\rho_2 > 0$. Fix some $\rho \in (\rho_1,\rho_2)$. There exists an
  isotopy
  \begin{displaymath}
    F : S \times [0,1] \to \Cone(\delta,H,\rho_1,\rho_2) \,,
  \end{displaymath}
  \begin{displaymath}
    \text{such that} \quad
    F(\cdot,0) = \opid_{S} 
    \qquad \text{and} \qquad
    F(S \times \{1\}) = \Sphere_{\rho} \cap H^{\perp} \,.
  \end{displaymath}
\end{prop}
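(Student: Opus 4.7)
My plan is to construct the isotopy explicitly by parameterising $S$ and moving the centre along $H$ while adjusting the radius so as to stay inside the conical cap. A point $z \in S$ has the form $z = x + r v$ with $v \in \Sphere \cap H^{\perp}$, and since $x \in H$ and $v \in H^{\perp}$ one computes
\[
|z|^{2} = |x|^{2} + r^{2} =: d^{2} \quad \text{(independent of $v$)}, \qquad |\pperp_{H}(z)| = r.
\]
Hence the hypothesis $S \subseteq \Cone(\delta,H,\rho_{1},\rho_{2})$ is equivalent to $r \ge \delta d$ together with $\rho_{1} < d < \rho_{2}$.

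I would then construct a continuous path $t \mapsto (x_{t}, r_{t}) \in H \times (0,\infty)$ with $(x_{0}, r_{0}) = (x, r)$ and $(x_{1}, r_{1}) = (0, \rho)$ such that each sphere $\Sphere(x_{t}, r_{t}) \cap (x_{t} + H^{\perp})$ is contained in $\Cone(\delta, H, \rho_{1}, \rho_{2})$, and then define $F(x + r v, t) = x_{t} + r_{t} v$. For every fixed $t$ the map $v \mapsto x_{t} + r_{t} v$ is a homeomorphism from $\Sphere \cap H^{\perp}$ onto $\Sphere(x_{t}, r_{t}) \cap (x_{t} + H^{\perp})$, so $F(\cdot, t)$ is automatically an embedding. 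Thus $F$ will be the required isotopy provided the path lies inside the conical cap, and continuity of $F$ reduces to continuity of $t \mapsto (x_{t}, r_{t})$.

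The path would be built in two stages. For $t \in [0, \tfrac{1}{2}]$ set $x_{t} = (1-2t) x$ and $r_{t} = \sqrt{d^{2} - (1-2t)^{2} |x|^{2}}$, so that $|x_{t}|^{2} + r_{t}^{2} \equiv d^{2}$; this drags the centre along $H$ to the origin while inflating the radius, keeping every point of the moving sphere at distance exactly $d$ from the origin. The shell condition $\rho_{1} < d < \rho_{2}$ is therefore preserved, and the cone condition $r_{t} \ge \delta d$ follows from $r_{t}^{2} \ge d^{2} - |x|^{2} = r^{2} \ge \delta^{2} d^{2}$. For $t \in [\tfrac{1}{2}, 1]$ keep $x_{t} = 0$ and linearly interpolate $r_{t}$ from $d$ to $\rho$; the cone condition degenerates to $r_{t} \ge \delta r_{t}$, and $r_{t}$ stays in the segment between $d$ and $\rho$, both of which lie in $(\rho_{1}, \rho_{2})$.

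The only delicate point is avoiding the inner boundary $\Sphere_{\rho_{1}}$ of the shell while collapsing the centre: the naive translation $x_{t} = (1-t) x$ with $r_{t} = r$ would fail, because the translated sphere contains points of norm as small as $r$, which may be below $\rho_{1}$. Keeping $|x_{t}|^{2} + r_{t}^{2}$ constant during the first stage is precisely what sidesteps this obstacle, and it is the only place in the argument requiring any real care.
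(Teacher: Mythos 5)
Your construction is exactly the one in the paper: the first stage $F(x+rv,t)=(1-2t)x+v\sqrt{d^2-(1-2t)^2|x|^2}$ is the paper's formula $(1-t)x+y\sqrt{r^2+|x|^2-|(1-t)x|^2}$ up to reparametrising $t$, the cone-condition check $r_t\ge\delta d$ is the same computation, and the final radius adjustment inside $H^{\perp}\cap\Shell(\rho_1,\rho_2)$ matches the paper's concluding step. The proof is correct and essentially identical in approach.
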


\begin{figure}[!htb]
  \centering
  \includegraphics[scale=0.7]{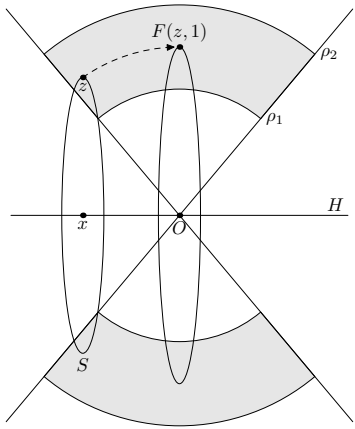}
  \caption{When we move the center of~a~sphere to the origin, we need to control
    the radius so that the deformation is performed inside the conical cap.}
  \label{F:sphere-move}
\end{figure}

\begin{proof}
  Any point $z \in S$ can be uniquely decomposed into a~sum $z = x + r y$, where
  $y \in \Sphere \cap H^{\perp}$ is a~point in~the unit sphere in~$H^{\perp}$. We define
  \begin{displaymath}
    F(x + r y, t) = (1-t)x + y \sqrt{r^2 + |x|^2 - |(1-t)x|^2} \,.
  \end{displaymath}
  This gives an isotopy which deforms $S$ to a~sphere perpendicular to $H$ and
  centered at~the origin (see Figure~\ref{F:sphere-move}). Fix some $z = x + ry
  \in S$. The sphere $S$ is contained in~$\Cone(\delta,H)$, so it follows that
  \begin{displaymath}
    \frac{|\pperp_H(F(z,t))|}{|F(z,t)|} 
    = \frac{\sqrt{r^2 + |x|^2 - |(1-t)x|^2}}{\sqrt{r^2 + |x|^2}}
    \ge \frac{r}{\sqrt{r^2 + |x|^2}}
    = \frac{|\pperp_H(z)|}{|z|} \ge \delta \,.
  \end{displaymath}
  This shows that the whole deformation is performed inside
  $\Cone(\delta,H)$. Next, we need to continuously change the radius to the
  value $\rho$ but this can be easily done inside $H^{\perp} \cap
  (\Ball_{\rho_2} \setminus \Ball_{\rho_1})$.
\end{proof}

Next, we give a sufficient condition on $\alpha$ and $\beta$ assuring that
$\Cone(\alpha,P) \cap \Cone(\beta,H)$ contains another cone $\Cone(\gamma,H)$
for some $\gamma \in (0,1)$. This allows to construct homotopies of spheres
inside $\Cone(\alpha,P) \cup \Cone(\beta,H)$

\begin{prop}
  \label{prop:two-cones}
  Let $\alpha > 0$ and $\beta > 0$ be two real numbers satisfying $\alpha+\beta <
  \sqrt{1-\beta^2}$ and let $H_0,H_1 \in G(n,m)$ be two $m$-planes in~$\R^n$.
  Assume that
  \begin{displaymath} 
    \Cone(\sqrt{1-\alpha^2},H_0^{\perp}) \cap \Cone(\sqrt{1-\beta^2},H_1^{\perp}) \ne \emptyset \,.
  \end{displaymath}
  Then for any $\epsilon > 0$ we have the inclusion
  \begin{equation}
    \label{eq:gen-cone-int}
    \Cone((\alpha+\beta)/\sqrt{1-\beta^2} + \epsilon,H_0)
    \subseteq
    \Cone(\epsilon,H_1)\,.
  \end{equation}
  In~particular, if $\alpha+\beta \le (1-\beta)\sqrt{1-\beta^2}$, then
  \begin{displaymath}
    H_0^{\perp} \subseteq \Cone(\alpha,H_0) \cap \Cone(\beta,H_1) \,.
  \end{displaymath}
\end{prop}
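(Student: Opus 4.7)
My plan is to pass to the contrapositive of the main inclusion, reducing it to a triangle-inequality chain built around a single ``bridging'' vector produced from the non-emptiness hypothesis. The special case at the end will then follow from the main inclusion by the choice $\epsilon = \beta$.

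First I would select a unit vector $y \in \Cone(\sqrt{1-\alpha^2},H_0^\perp) \cap \Cone(\sqrt{1-\beta^2},H_1^\perp)$. Unpacking the cone definitions (the ``axis'' of $\Cone(\sqrt{1-\alpha^2},H_0^\perp)$ is $H_0$) gives
\[
    |\pperp_{H_0}(y)| \le \alpha \qquad \text{and} \qquad |\pperp_{H_1}(y)| \le \beta\,.
\]
I would then decompose $y = y_1 + \pperp_{H_1}(y)$ with $y_1 = \proj_{H_1}(y) \in H_1$. Pythagoras gives $|y_1| \ge \sqrt{1-\beta^2}$, and applying $\pperp_{H_0}$ together with the triangle inequality yields
\[
    |\pperp_{H_0}(y_1)| \le |\pperp_{H_0}(y)| + |\pperp_{H_0}(\pperp_{H_1}(y))| \le \alpha + \beta\,.
\]
Hence the direction $y_1/|y_1| \in H_1$ satisfies $|\pperp_{H_0}(y_1/|y_1|)| \le \eta$ with $\eta = (\alpha+\beta)/\sqrt{1-\beta^2} < 1$ by the assumption on $\alpha,\beta$.

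Second, I would take any $x$ with $|\pperp_{H_1}(x)| < \epsilon|x|$ (i.e.\ $x$ outside $\Cone(\epsilon,H_1)$) and write $x = \proj_{H_1}(x) + \pperp_{H_1}(x)$. By the triangle inequality,
\[
    |\pperp_{H_0}(x)| \le |\pperp_{H_0}(\proj_{H_1}(x))| + |\pperp_{H_1}(x)| \le \eta|\proj_{H_1}(x)| + \epsilon|x| \le (\eta+\epsilon)|x|\,,
\]
so $x \notin \Cone(\eta+\epsilon,H_0)$, which is the contrapositive of the desired inclusion. The main obstacle is the middle inequality, namely the pointwise bound $|\pperp_{H_0}(w)| \le \eta|w|$ for every $w \in H_1$ (in particular for $w = \proj_{H_1}(x)$). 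The previous paragraph only delivers this estimate along the single direction $y_1$, so one must promote it to a uniform bound on all of $H_1$; I expect this to require using the full geometric content of the hypothesis (a rotation argument within $H_1$, or interpreting ``nonempty intersection'' as providing enough common directions to span a suitable subspace) rather than merely the existence of one bridging vector.

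Finally, the ``in particular'' assertion is immediate from the main inclusion with $\epsilon = \beta$: the hypothesis $\alpha+\beta \le (1-\beta)\sqrt{1-\beta^2}$ rearranges to $\eta + \beta \le 1$, so for every unit $x \in H_0^\perp$ one has $|\pperp_{H_0}(x)|/|x| = 1 \ge \max(\eta+\beta,\alpha)$; scaling then gives $H_0^\perp \subseteq \Cone(\eta+\beta,H_0) \cap \Cone(\alpha,H_0) \subseteq \Cone(\beta,H_1) \cap \Cone(\alpha,H_0)$.
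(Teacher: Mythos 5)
Your reduction and the triangle--inequality chain are exactly the paper's argument, and the obstacle you flag at the end is a genuine gap --- one that the paper's own proof does not actually close. The paper extracts a common line $L$ from the nonempty intersection and then writes ``choose some point $z\in H_1$ and find a point $y\in L$ such that $z=\proj_{H_1}(y)$''; but $\proj_{H_1}(L)$ is at most one-dimensional, so such a $y$ exists for every $z\in H_1$ only when $m=1$. For $m\ge 2$ the single bridging vector controls $|\pperp_{H_0}(\cdot)|$ only along the one direction $\proj_{H_1}(y)$, exactly as you observed, and no rotation argument can rescue this, because the statement as written is false. Take $n=4$, $m=2$, $H_0=\opspan\{e_1,e_2\}$, $H_1=\opspan\{e_1,e_3\}$ and $\alpha=\beta$ small: then $e_1$ lies in both cones of the hypothesis and $\alpha+\beta<\sqrt{1-\beta^2}$, yet $e_3\in\Cone((\alpha+\beta)/\sqrt{1-\beta^2}+\epsilon,H_0)$ (since $\pperp_{H_0}(e_3)=e_3$) while $\pperp_{H_1}(e_3)=0$, so $e_3\notin\Cone(\epsilon,H_1)$ and \eqref{eq:gen-cone-int} fails.

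What is true, and what is actually available where the proposition is invoked (e.g.\ in the proof of Proposition~\ref{prop:big-proj-fat-simp}, where $H_1=P=\opspan\{x_1,\dots,x_m\}$ with the projections $\proj_{H_0}(x_i)=r_Ie_i$ orthogonal of equal length and $|\pperp_{H_0}(x_i)|\le\delta\rho_I$), is a strengthened hypothesis: $H_1$ admits a basis $v_1,\dots,v_m$, each $v_i$ lying in $\Cone(\sqrt{1-\alpha^2},H_0^{\perp})$, whose projections $\proj_{H_0}(v_i)$ form a nearly orthogonal system of comparable lengths. Under that hypothesis one does get the uniform bound $|\pperp_{H_0}(w)|\le\eta|w|$ for all $w\in H_1$ (expand $w$ in the basis, or argue via Proposition~\ref{prop:red-ang}), after which your contrapositive step and your derivation of the ``in particular'' clause from the main inclusion with $\epsilon=\beta$ are correct. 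So the honest verdict is: your proposal is faithful to the paper's route, the step you could not justify is precisely the step the paper gets wrong, and the proposition needs its hypothesis strengthened along these lines before either proof can be completed.
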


\begin{proof}
  First we estimate the ``angle'' between $H_0$ and $H_1$. Since the cones
  $\Cone(\sqrt{1-\alpha^2},H_0^{\perp})$ and
  $\Cone(\sqrt{1-\beta^2},H_1^{\perp})$ have nonempty intersection they both
  must contain a~common line $L \in G(n,1)$.
  \begin{displaymath}
    L \subseteq \Cone(\sqrt{1-\alpha^2},H_0^{\perp}) \cap \Cone(\sqrt{1-\beta^2},H_1^{\perp}) \,.
  \end{displaymath}
  Choose some point $z \in H_1$ and find a~point $y \in L$ such that $z =
  \proj_{H_1}(y)$. Since $y \in \Cone(\sqrt{1-\beta^2},H_1^{\perp})$ it follows
  that $|\pperp_{H_1}(y)| < \beta |y|$.  Furthermore, by the Pythagorean theorem
  \begin{displaymath}
    |y|^2 = |\proj_{H_1}(y)|^2 + |\pperp_{H_1}(y)|^2 
    \le |z|^2 + \beta^2|y|^2  \,,
    \quad \text{hence} \quad
    |y| \le \frac{|z|}{\sqrt{1 - \beta^2}} \,.
  \end{displaymath}
  Because $y$ also belongs to the cone $\Cone(\sqrt{1-\alpha^2},H_0^{\perp})$ we
  have $|\pperp_{H_0}(y)| < \delta |y|$, so we obtain
  \begin{align}
    |\pperp_{H_0}(z)| &\le |\pperp_{H_0}(y)| + |\pperp_{H_0}(z-y)| 
    \le |\pperp_{H_0}(y)| + |z-y|  \notag \\
    &= |\pperp_{H_0}(y)| + |\pperp_{H_1}(y)|
    \le \alpha |y| + \beta |y|  
    \le \frac{\alpha + \beta}{\sqrt{1-\beta^2}} |z|
    \qquad \text{for all } z \in H_1
    \label{est:QH0z} \,.
  \end{align}

  Choose some $\epsilon > 0$ and let
  \begin{displaymath}
    x \in C\left(
      \frac{\alpha+\beta}{\sqrt{1-\beta^2}} + \epsilon,H_0
    \right) \,,
    \quad \text{so} \quad
    |\pperp_{H_0}(x)| \ge \left(
      \frac{\alpha+\beta}{\sqrt{1-\beta^2}} + \epsilon
    \right)
    |x| \,.
  \end{displaymath}
  If $\epsilon$ is small enough, then such $x$ exists by the assumption that
  $\alpha+\beta < \sqrt{1-\beta^2}$. For bigger~$\epsilon$ the inclusion
  $\Cone((\alpha+\beta)/\sqrt{1-\beta^2} + \epsilon,H_0) \subseteq
  \Cone(\epsilon,H_1)$ is trivially true. From the triangle inequality
  \begin{align*}
    \frac{\alpha+\beta}{\sqrt{1-\beta^2}} |x|
    &\le |\pperp_{H_0}(x)| 
    \le |\pperp_{H_0}(\pperp_{H_1}(x))| + |\pperp_{H_0}(\proj_{H_1}(x))| \\
    &\le |\pperp_{H_1}(x)| + |\pperp_{H_0}(\proj_{H_1}(x))| \,,
  \end{align*}
  \begin{displaymath}
    \text{hence} \quad
    |\pperp_{H_1}(x)| \ge \frac{\alpha+\beta}{\sqrt{1-\beta^2}} |x| + \epsilon |x| - |\pperp_{H_0}(\proj_{H_1}(x))| \,.
  \end{displaymath}
  Because $\proj_{H_1}(x) \in H_1$ and because of~estimate \eqref{est:QH0z} we have
  \begin{displaymath}
    |\pperp_{H_1}(x)| 
    \ge \frac{\alpha+\beta}{\sqrt{1-\beta^2}} |x| + \epsilon |x| - \frac{\alpha+\beta}{\sqrt{1-\beta^2}} |\proj_{H_1}(x)|
    \ge \epsilon |x| \,. \qedhere
  \end{displaymath}
\end{proof}

\subsection{The construction of voluminous simplices}

For any $x_0 \in \Sigma^*$ Proposition~\ref{prop:big-proj-fat-simp} stated
below, ensures the existence of~$d = d(x_0) > 0$ and an~$(\eta,d)$-voluminous
simplex with vertices on $\Sigma \cap \CBall(x_0,d)$ and also that at~any scale
below $d$ our set $\Sigma$ has big projection onto some affine $m$-plane.
The~reasoning used here mimics \cite[Proposition~3.5]{0911.2095}. Note that,
finiteness of the~$\E_p^l$-energy is not used in the proof.

\begin{prop}
  \label{prop:big-proj-fat-simp}
  Let $\delta \in (0,1)$ and $\Sigma \in \A(\delta,m)$ be an admissible set.
  There exists an~$\eta_0 = \eta_0(\delta,m) > 0$ such that for every point $x_0
  \in \Sigma^*$ there is a~stopping distance $d = d(x_0) > 0$ and
  a~$(m+1)$-tuple of~points $(x_1, x_2, \ldots, x_{m+1}) \in \Sigma^{m+1}$ such
  that $\mathbf{T} = \simp(x_0, \ldots, x_{m+1})$ is
  $(\eta_0,d)$-voluminous. Moreover, for all $\rho \in (0,\frac 12 d)$ there
  exists an $m$-dimensional subspace $H = H(x_0,\rho) \in G(n,m)$ with the
  property
  \begin{equation}
    \label{eq:big-proj}
    (x_0 + H) \cap \Ball(x_0, \sqrt{1 - \delta^2} \rho)
    \subseteq \proj_{x_0 + H}(\Sigma \cap \Ball(x_0, \rho)) \,.
  \end{equation}
\end{prop}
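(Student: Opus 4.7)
\emph{Strategy.} I first establish the projection covering \eqref{eq:big-proj} by a topological linking/cone-homotopy argument at $x_0$, then use it to lift a regular $m$-simplex from the mock tangent plane $x_0 + H_{x_0}$ to $\Sigma$; finally I produce an $(m+1)$-st vertex transverse to the affine hull of these lifts by repeating the linking argument with a slightly tilted plane. The stopping distance is $d := r_0(x_0)$ (possibly reduced by a dimensional factor so that the simplex construction closes).

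\emph{Projection covering.} Fix $x_0 \in \Sigma^*$ and let $H_0 := H_{x_0}$, $r_0 := r_0(x_0)$, and $\Slk := \Sphere(x_0, r_0/2) \cap (x_0 + H_0^\perp)$ be supplied by \ref{adm:cone} and \ref{adm:str-link}; take $H(x_0, \rho) := H_0$ for every $\rho \in (0, d/2)$. For $y \in (x_0 + H_0) \cap \Ball(x_0, \sqrt{1 - \delta^2}\rho)$ and $s := |y - x_0|$, the inequality $s < \sqrt{1 - \delta^2}\rho$ rearranges to $\delta s/\sqrt{1-\delta^2} < \sqrt{\rho^2 - s^2}$, so the interval $(\delta s/\sqrt{1-\delta^2},\, \sqrt{\rho^2 - s^2})$ is nonempty; pick any $r$ in it and set $S_y := \Sphere(y, r) \cap (y + H_0^\perp)$. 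Each $z \in S_y$ satisfies $|z - x_0|^2 = s^2 + r^2$ and $|\pperp_{H_0}(z - x_0)| = r \ge \delta|z - x_0|$, so $S_y - x_0 \subseteq \Cone(\delta, H_0)$. Choosing $\rho_1, \rho_2$ with $\rho_2 < r_0$ so that the cap $\Cone(\delta, H_0, \rho_1, \rho_2)$ contains both $S_y - x_0$ and $\Slk - x_0$ (the latter sits on the axis $H_0^\perp$), condition \ref{adm:cone} makes this cap disjoint from $\Sigma - x_0$. Proposition~\ref{prop:sph-trans} followed by radial rescaling inside $x_0 + H_0^\perp$ gives an isotopy in $\R^n \setminus \Sigma$ from $S_y$ to $\Slk$; Proposition~\ref{prop:lk-htp-inv} together with \eqref{eq:adm:link} yields $\oplk(\Sigma, S_y) = 1$, and Proposition~\ref{prop:intpoint} produces $z \in \Sigma \cap \Ball(y, r) \cap (y + H_0^\perp) \subseteq \Ball(x_0, \rho)$ with $\proj_{x_0+H_0}(z) = y$.

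\emph{Voluminous simplex and transverse vertex (main obstacle).} Choose a regular $m$-simplex $\{x_0, y_1, \dots, y_m\}$ inside $(x_0 + H_0) \cap \Ball(x_0, \tfrac14 \sqrt{1 - \delta^2}\,d)$; its minimal height equals $\eta_1 d$ for some $\eta_1 = \eta_1(m) > 0$. Lifting each $y_i$ via \eqref{eq:big-proj} at $\rho = d/2$ yields $x_i \in \Sigma \cap \Ball(x_0, d/2)$ with $|x_i - y_i| \le \delta d/2$ (by \ref{adm:cone}), and Proposition~\ref{prop:vol-close}, provided $\delta$ is small relative to $\varsigma_m(\eta_1)$, gives that $\simp(x_0, x_1, \dots, x_m)$ has minimal height $\ge \eta_2(m, \delta)\,d$ as an $m$-simplex. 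Let $H' := \aff\{x_0, \dots, x_m\} - x_0$; Proposition~\ref{prop:red-ang} applied to the almost-orthogonal basis $\{x_i - x_0\}$ gives $\dgras(H', H_0) \le C_{\red}(m)\,\delta$. Rotate $H'$ through a dimensional angle $\alpha = \alpha(m, \delta) > 0$ (say $\alpha \simeq \delta$) inside a 2-plane spanned by a unit vector of $H'$ and one of $H_0^\perp$ to obtain $\tilde H \in G(n, m)$; the triangle inequality for projections yields $\Cone(\delta + \dgras(\tilde H, H_0), \tilde H) \subseteq \Cone(\delta, H_0)$, so the tilted cap is still $\Sigma$-free near $x_0$ by \ref{adm:cone}. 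Rerunning the argument of the previous paragraph with $\tilde H$ in place of $H_0$ -- with Proposition~\ref{prop:two-cones} controlling the intersection of the original and tilted cones and permitting $\Slk$ to be isotoped to an appropriate sphere perpendicular to $\tilde H$ through the combined cap -- lifts a point $y_{m+1} \in (x_0 + \tilde H)$ with $|\pperp_{H'}(y_{m+1} - x_0)| \simeq d \sin\alpha$ to a vertex $x_{m+1} \in \Sigma \cap \Ball(x_0, d/2)$ satisfying $\dist(x_{m+1}, \aff\{x_0, \dots, x_m\}) \ge \eta_0(m, \delta)\,d$. Thus $\mathbf{T} := \simp(x_0, x_1, \dots, x_{m+1})$ is $(\eta_0, d)$-voluminous. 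The main obstacle is this transverse-vertex step: the tilt $\alpha$ must be small enough for the tilted cone to avoid $\Sigma$ near $x_0$ (quantified via Proposition~\ref{prop:two-cones}) yet large enough that the lifted vertex is at distance comparable to $d$ from $\aff\{x_0, \dots, x_m\}$; the choice $\alpha \simeq \delta$ simultaneously secures both, yielding $\eta_0 = \eta_0(m, \delta) > 0$.
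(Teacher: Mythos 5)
Your ``projection covering'' step is essentially the paper's verification of its linking condition at the first scale and is fine as far as it goes, but the proposal has a genuine gap at its core: the choice of stopping distance $d:=r_0(x_0)$ cannot work. Condition \ref{adm:cone} only says that $\Sigma\cap\Ball(x_0,r_0)$ avoids the cone $x_0+\Cone(\delta,H_{x_0})$; it gives \emph{no lower bound} on how far $\Sigma$ deviates from the plane $x_0+H_{x_0}$ at scale $r_0$. For instance $\Sigma\cap\Ball(x_0,r_0)$ may coincide exactly with a flat $m$-disc in $x_0+H_{x_0}$ (think of the centre of the top face of a very thin, very wide ``pancake''; the cone condition forces $r_0$ to be at most of the order of the thickness, below which the top face is genuinely flat). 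Then every $(m+1)$-simplex with vertices in $\Sigma\cap\CBall(x_0,d)$ is degenerate and no $\eta_0(\delta,m)>0$ exists. This is precisely why the paper's proof is a multi-scale stopping-time construction: at each scale it asks whether some point of $\Sigma\cap\Shell(\tfrac12\rho_I,2\rho_I)$ lies at distance $\ge h_0\delta\rho_I$ from the current secant plane $P$ (case (A): stop, set $d=4\rho_I$) or whether $\Sigma$ is trapped in a thin slab around $P$ (case (B): replace $H_I$ by $P$, pass to the next scale $\rho_{I+1}\ge2\rho_I$, and re-establish the linking condition there via Propositions~\ref{prop:sph-trans}, \ref{prop:two-cones} and Corollary~\ref{cor:sph-in-cone}); compactness forces termination. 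The stopping distance $d(x_0)$ produced this way is in general much larger than $r_0(x_0)$, and correspondingly the plane $H(x_0,\rho)$ in \eqref{eq:big-proj} must be allowed to depend on the scale $\rho$ (it is $H_i$ for $\rho$ comparable to $\rho_i$), which your fixed choice $H\equiv H_{x_0}$ does not accommodate.

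Independently of the choice of $d$, your mechanism for the transverse vertex is quantitatively broken. To run the linking argument in the tilted cone you need its aperture $\delta''\ge\delta+\dgras(\tilde H,H_0)\gtrsim\sin\alpha$, so the disc $\Ball(y_{m+1},r)\cap(y_{m+1}+\tilde H^{\perp})$ in which Proposition~\ref{prop:intpoint} places the point of $\Sigma$ must have radius $r\ge\delta'' s/\sqrt{1-(\delta'')^2}\ge s\sin\alpha$, where $s=|y_{m+1}-x_0|$. But $s\sin\alpha$ is exactly the transverse displacement the tilt buys you, and the linking argument gives no control over \emph{where} in that disc the point of $\Sigma$ lies; it may land back on $x_0+H'$. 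So $\dist(x_{m+1},\aff\{x_0,\dots,x_m\})\ge s\sin\alpha-r$ has no positive lower bound for any choice of $\alpha$. No tuning of $\alpha\simeq\delta$ repairs this; the transverse vertex has to come from the dichotomy (a point of $\Sigma$ already sticking out of the slab around $P$), not from a tilted lift.
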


\begin{cor}
  \label{cor:uahlreg}
  For any $x_0 \in \Sigma^*$ and any $\rho \le \frac 12 d(x_0)$ we have
  \begin{displaymath}
    \HM^m(\Sigma \cap \Ball(x_0,\rho)) \ge (1 - \delta^2)^{\frac m2} \omega_m \rho^m \,.
  \end{displaymath}
\end{cor}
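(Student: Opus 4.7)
The plan is to use Proposition~\ref{prop:big-proj-fat-simp} directly: the inclusion \eqref{eq:big-proj} says that the orthogonal projection of $\Sigma \cap \Ball(x_0,\rho)$ onto the affine plane $x_0 + H$ covers the entire $m$-dimensional disk $(x_0 + H) \cap \Ball(x_0, \sqrt{1-\delta^2}\,\rho)$. Since $\rho \le \tfrac12 d(x_0)$ the proposition applies, so the subspace $H = H(x_0,\rho)$ is available.

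Next, I would recall the standard fact that the orthogonal projection $\proj_{x_0+H}: \R^n \to x_0+H$ is $1$-Lipschitz (indeed distance non-increasing), hence it does not increase the $m$-dimensional Hausdorff measure of any subset. This is the only analytic input needed.

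Combining these two facts, I would write the chain
\begin{displaymath}
  \HM^m(\Sigma \cap \Ball(x_0,\rho))
  \ge \HM^m\bigl(\proj_{x_0+H}(\Sigma \cap \Ball(x_0,\rho))\bigr)
  \ge \HM^m\bigl((x_0+H) \cap \Ball(x_0,\sqrt{1-\delta^2}\,\rho)\bigr),
\end{displaymath}
where the first inequality uses the Lipschitz property of $\proj_{x_0+H}$ and the second uses the inclusion \eqref{eq:big-proj}. Since $(x_0+H) \cap \Ball(x_0, r)$ is an $m$-dimensional Euclidean ball of radius $r$ inside the affine plane $x_0 + H$, its $\HM^m$-measure equals $\omega_m r^m$. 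Taking $r = \sqrt{1-\delta^2}\,\rho$ yields $\omega_m (1-\delta^2)^{m/2} \rho^m$, which is precisely the desired bound.

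There is no real obstacle here: the whole content of the corollary sits inside Proposition~\ref{prop:big-proj-fat-simp}, and the corollary is a~one-line consequence once one records that orthogonal projections do not increase Hausdorff measure. The only mild care is to make sure the argument is applied to the right radius (the projection reaches only the smaller ball of radius $\sqrt{1-\delta^2}\,\rho$, which accounts for the factor $(1-\delta^2)^{m/2}$ in the final estimate).
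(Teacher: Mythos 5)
Your argument is correct and coincides with the paper's own proof: both apply the inclusion \eqref{eq:big-proj} from Proposition~\ref{prop:big-proj-fat-simp} and the fact that the $1$-Lipschitz orthogonal projection $\proj_{x_0+H}$ cannot increase $\HM^m$-measure, then compute the measure of the $m$-dimensional ball of radius $\sqrt{1-\delta^2}\,\rho$. Nothing is missing.
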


\begin{proof}
  The orthogonal projection $\proj_{x_0 + H}$ is Lipschitz with constant $1$ so
  it cannot increase the $\HM^m$-measure. From~\eqref{eq:big-proj} we know that
  the image of~$\Sigma \cap \Ball(x_0, \rho)$ under $\proj_{x_0 + H}$ contains
  the ball $(x_0 + H) \cap \Ball(x_0, \sqrt{1 - \delta^2} \rho)$. The measure
  of~that ball equals $(1 - \delta^2)^{\frac m2} \omega_m \rho^m$.
\end{proof}

\begin{proof}[Proof of~Proposition~\ref{prop:big-proj-fat-simp}]
  Without loss of~generality we can assume that $x_0 = 0$ is the origin.
  To~prove the proposition we will construct finite sequences of
  \begin{itemize}
  \item compact, connected, centrally symmetric sets $F_0 \subseteq F_1 \subseteq \ldots \subseteq F_N$,
  \item $m$-dimensional subspaces $H_i \subseteq \R^n$ for $i = 0, 1, \ldots, N$,
  \item and of~radii $\rho_0 < \rho_1 < \cdots < \rho_N$.
  \end{itemize}
  For brevity, we define 
  \begin{displaymath}
    r_i = \sqrt{1 - \delta^2} \rho_i \,.
  \end{displaymath}
  The above sequences will satisfy the following conditions
  \begin{itemize}
  \item the interior of~$F_i$ is disjoint with $\Sigma$, i.e.
    \begin{equation}
      \label{cond:disj}
      \Sigma \cap \opint F_i = \emptyset \,,
    \end{equation}
  \item the radii grow geometrically, i.e.
    \begin{equation}
      \rho_{i+1} \ge 2 \rho_i \,, \label{cond:growth}
    \end{equation}
  \item for each $i \ge 0$ the set $F_{i+1}$ contains a~large conical cap, i.e.
    \begin{equation}
      \label{cond:cone}
      \Cone(\delta,H_{i+1},\tfrac 12 \rho_i,\rho_{i+1}) \subseteq F_{i+1} \,,
    \end{equation}
  \item all spheres $S$ centered at~$H_i \cap \Ball_{r_i}$, perpendicular to
    $H_i$ (i.e.~ $S \subseteq H_i^{\perp} + p$ for some $p \in \R^n$) and
    contained in~$F_i$ are linked with $\Sigma$, i.e.
    \begin{equation}
      \label{cond:link}
      \forall\, x \in H_i \cap \Ball_{r_i} \;
      \forall\, s > 0 \;
      \left(
        S = \Sphere(x,s) \cap (x + H_i^{\perp}) \subseteq F_i
        \quad \Rightarrow \quad
        \oplk(\Sigma, S) = 1
      \right) .
    \end{equation}
  \end{itemize}

  Let us define the first elements of~these sequences. We set $H_0 = H_{x_0}$,
  $\rho_0 = 0$ and $F_0 = \emptyset$. Next, we set
  \begin{displaymath}
    H_1 = H_0 \,, \quad
    \rho_1 = \inf \{ s > 0 : \Cone(\delta,H_0,0,s) \cap \Sigma \ne \emptyset \} 
    \quad \text{and} \quad
    F_1 = \Cone(\delta,H_1,0,\rho_1) \,.
  \end{displaymath}
  Directly from the definition of~an admissible set, we know that $\rho_1 > 0$,
  so the condition \eqref{cond:growth} is satisfied for $i = 0$. Conditions
  \eqref{cond:disj} and \eqref{cond:cone} are immediate for $i = 0$. Using
  Proposition~\ref{prop:sph-trans} one can deform any sphere $S$ from condition
  \eqref{cond:link} to the sphere $\Slk_x$ defined in~\ref{adm:str-link} of~the
  definition of~$\A(\delta,m)$. This shows that \eqref{cond:link} is satisfied
  for $i = 0$.

  We proceed by induction. Assume we have already defined the sets $F_i$,
  subspaces $H_i$ and radii $\rho_i$ for $i = 0,1, \ldots, I$. Now, we will show
  how to continue the construction.

  Let $(e_1,e_2, \ldots, e_m)$ be an orthonormal basis of~$H_I$. We choose $m$
  points lying on $\Sigma$ such that
  \begin{displaymath}
    x_i \in \Sigma \cap \Ball(r_I e_i, \delta \rho_I) \cap (H_I^{\perp} + r_I e_i) 
  \end{displaymath}
  \begin{equation}
    \text{and in~particular} \quad
    \label{eq:good-ball}
    x_i \in \Ball(x_0, 2\rho_I)
    \quad \text{for} \quad
    i \in \{0,1,\ldots,m\}\,.
  \end{equation}
  Condition \eqref{cond:link} together with Proposition~\ref{prop:intpoint}
  ensure that such points exist. The $m$-simplex $\mathbf{R} = \simp(x_0, x_1,
  \ldots, x_m)$ will be the base of~our $(m+1)$-simplex $\mathbf{T}$. Note that
  \begin{displaymath}
    \diam(\mathbf{R}) \le 4 \rho_I
    \quad \text{and} \quad
    \proj_{H_I}(\mathbf{R}) = \simp(0, r_I e_1, r_I e_2, \ldots, r_I e_m) \,,
    \quad \text{hence} \quad
    \HM^m(\mathbf{R}) \ge \frac{r_I^m}{m!} \,.
  \end{displaymath}

  Recall that $x_0 = 0$ and set $P = \opspan\{ x_1, x_2, \ldots, x_m \}$. It
  suffices to find one more point $x_{m+1} \in \Sigma$ such that the distance
  $\dist(x_{m+1}, P) \ge \tilde{\eta} \rho_I$ for some positive $\tilde{\eta}$.
  Indeed, if we set $\mathbf{T} = \simp(x_0,\ldots,x_{m+1})$, we have
  \begin{equation}
    \label{eq:good-eta}
    \hmin(\mathbf{T}) 
    = \frac{(m+1) \HM^{m+1}(\mathbf{T})}{\max \{ \HM^m(\face_i \mathbf{T}) \}_{i=0}^{m+1}}
    \ge \frac{\tilde{\eta} \rho_I (m+1) \HM^m(\mathbf{R})}{(4\rho_I)^m \omega_m}
    \ge (4\rho_I) \frac{\tilde{\eta}(1-\delta^2)^{\frac m2}}{\omega_m 4^{m+1} m!} \,.
  \end{equation}

  Choose a~small positive number $h_0 = h_0(\delta) \le \frac 12$ such that
  \begin{equation}
    \label{def:h0choice}
    \delta + 2h_0\delta \le (1 - 2h_0\delta) \sqrt{1 - (2h_0\delta)^2} \,.
  \end{equation}
  This is always possible because when we decrease $h_0$ to $0$ the left-hand
  side of~\eqref{def:h0choice} converges to $\delta < 1$ and the right-hand side
  converges to $1$. We need this condition to be able to apply
  Proposition~\ref{prop:two-cones} later on.
  \begin{rem}
    \label{rem:delta14}
    Note that if $\delta \le \frac 14$, we can set $h_0 = \frac 12$ because
    then
    \begin{displaymath}
      \delta + 2 h_0 \delta \le \tfrac 12
      \quad \text{and} \quad 
      (1 - 2 h_0 \delta) \sqrt{1 - (2 h_0 \delta)^2}
      \ge \tfrac 34 \tfrac{\sqrt{15}}{16} \ge \tfrac{9}{16} \,.
    \end{displaymath}
  \end{rem}

  There are two possibilities (see Figure~\ref{F:casesAB})
  \begin{enumerate}[label=(\Alph*)]
  \item \label{pos:good} there exists a~point $x_{m+1} \in \Sigma \cap
    \Shell(\tfrac 12 \rho_I,2 \rho_I)$ such that
    \begin{displaymath}
      \dist(x_{m+1},P) \ge h_0 \delta \rho_I \,,
    \end{displaymath}
  \item \label{pos:flat} $\Sigma$ is contained in~a~small neighborhood of~$P$, i.e.
    \begin{displaymath} 
      \Sigma \cap \Shell(\tfrac 12 \rho_I,2 \rho_I) \subseteq  P + \Ball_{h_0 \delta \rho_I}\,.
    \end{displaymath}
  \end{enumerate}

  \begin{figure}[!htb]
    \centering
    \includegraphics{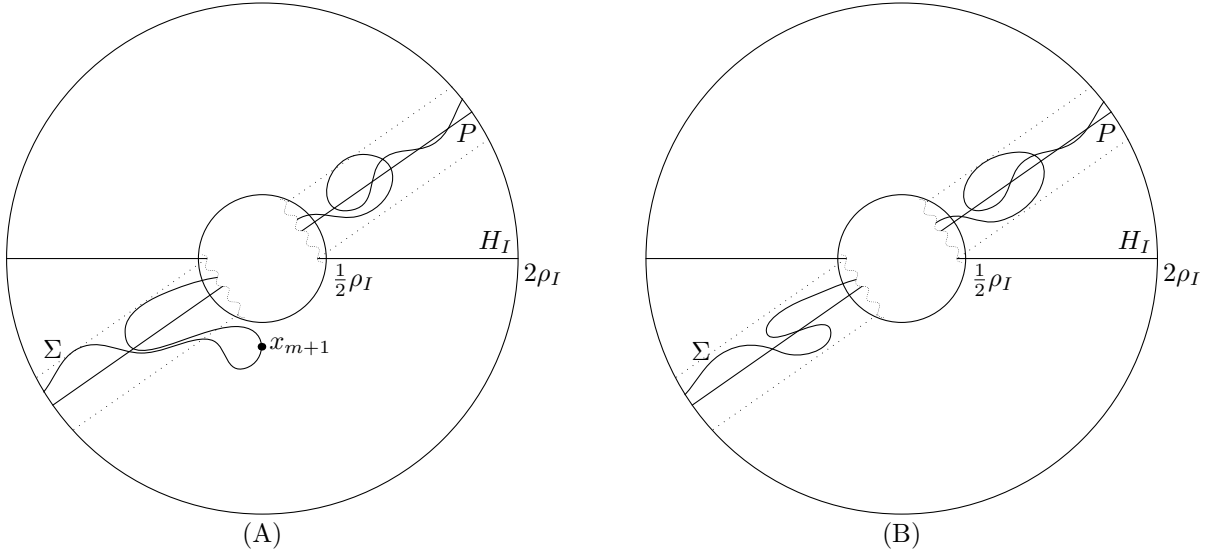}
    \caption{The two possible configurations.}
    \label{F:casesAB}
  \end{figure}

  If case \ref{pos:good} occurs, then we can end our construction immediately.
  The point $x_{m+1}$ satisfies
  \begin{displaymath}
    x_{m+1} \in \Ball(x_0, 2 \rho_I)
    \quad \text{and} \quad
    \dist(x_{m+1}, P) \ge h_0 \delta \rho_I \,.
  \end{displaymath}
  Hence, recalling~\eqref{eq:good-eta}, we may set
  \begin{equation}
    \label{def:eta} 
    \mathbf{T} = \simp(x_0, \ldots, x_{m+1}) \,, \quad
    N = I \,, \quad
    \eta_0 = \frac{h_0 \delta (1-\delta^2)^{\frac m2}}{\omega_m 4^{m+1} m!}
    \quad \text{and} \quad
    d = d(x_0) = 4 \rho_I \,. 
  \end{equation}

  If case \ref{pos:flat} occurs, then our set $\Sigma$ is almost flat
  in~$\Shell(\tfrac 12 \rho_I,2 \rho_I)$ so there is no chance of~finding
  a~voluminous simplex in~this scale and we have to continue our
  construction. Let
  \begin{itemize}
  \item $H_{I+1} = P$,
  \item $\rho_{I+1} = \inf \{ s > \rho_I : \Cone(\delta,P,\rho_I,s) \cap \Sigma \ne \emptyset \}$ and
  \item $F_{I+1} = F_I \cup \Cone(\delta,P,\tfrac 12 \rho_I,\rho_{I+1})$.
  \end{itemize}
  We assumed \ref{pos:flat}, so it follows that
  \begin{equation}
    \label{eq:S-in-cone}
    \forall x \in \Sigma \cap \Shell(\tfrac 12 \rho_I,2 \rho_I)
    \quad
    |\pperp_P(x)| \le h_0 \delta \rho_I \le 2 h_0 \delta |x| < \delta |x| \,.
  \end{equation}
  This means that $\Cone(\delta,P,\tfrac 12 \rho_I,2\rho_I)$ does not intersect
  $\Sigma$ and we can safely set $H_{I+1} = P$. It is immediate that $\rho_{I+1}
  \ge 2 \rho_I$ so conditions \eqref{cond:disj}, \eqref{cond:growth} and
  \eqref{cond:cone} are satisfied. Now, the only thing left is to verify condition
  \eqref{cond:link}.

  We are going to show that all spheres $S$ contained in~$F_{I+1}$ of~the form 
  \begin{displaymath}
    S = \Sphere(x,r) \cap (x + P^{\perp}) \,,
    \quad \text{for some } x \in P \cap \Ball_{r_{I+1}}
  \end{displaymath}
  are linked with $\Sigma$. By the inductive assumption, we already know that
  spheres centered at~$H_I \cap \Ball_{r_I}$, perpendicular to $H_I$ and
  contained in~$F_I$ are linked with $\Sigma$. Therefore, all we need to do is
  to continuously deform $S$ to an appropriate sphere centered at~$H_I$ and
  contained in~$F_I$ in~such a~way that we never leave the set $F_{I+1}$ (see
  Figure~\ref{F:htp-in-F}).

  \begin{figure}[!htb]
    \centering
    \includegraphics{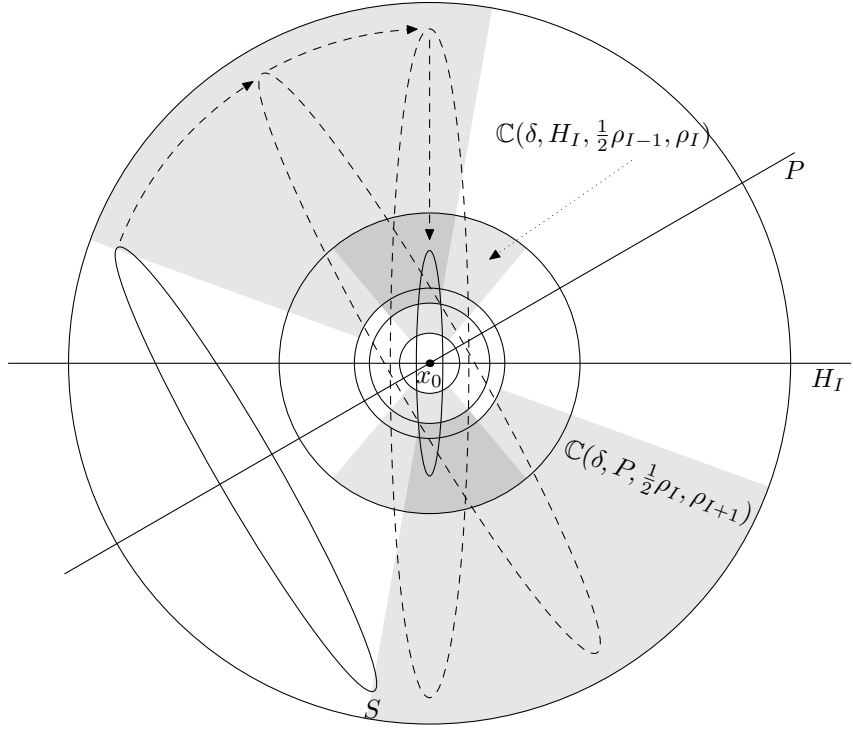}
    \caption{First we move the center of~$S$ to $x_0$. Then we rotate $S$ so that
      it is perpendicular to $H_I$. Finally we change the radius so that it is
      between $\frac 12 \rho_{I-1}$ and $\rho_I$.}
    \label{F:htp-in-F}
  \end{figure}

  We know that $F_{I+1}$ contains the conical cap $\mathbf{C} =
  \Cone(\delta,P,\tfrac 12 \rho_I,\rho_{I+1})$, so we can use
  Proposition~\ref{prop:sph-trans} to move $S$ inside $\mathbf{C}$, so that it
  is centered at~the origin.

  From \eqref{eq:S-in-cone} we get
  \begin{displaymath}
    \Sigma \cap \Shell(\tfrac 12 \rho_I,2 \rho_I)
    \subseteq
    \R^n \setminus \Cone(2 h_0 \delta, P)
    \subseteq
    \Cone(\sqrt{1 - (2 h_0 \delta)^2}, P^{\perp}) \,.
  \end{displaymath}
  Using this and our inductive assumption we obtain
  \begin{displaymath}
    \Sigma \cap \Shell(\tfrac 12 \rho_I,\rho_I)
    \subseteq
    \Cone(\sqrt{1 - \delta^2},H_I^{\perp}) \cap \Cone(\sqrt{1 - (2 h_0 \delta)^2},P^{\perp}) \,.
  \end{displaymath}

  We have two cones that have nonempty intersection and we chose $h_0$ such
  that~\eqref{def:h0choice} holds, so we can apply
  Proposition~\ref{prop:two-cones} with $\alpha = \delta$ and $\beta = 2 h_0
  \delta$. Hence the intersection $\Cone(\delta,H_I) \cap \Cone(\delta,P)$
  contains the space $H_I^{\perp}$. Therefore
  \begin{displaymath}
    H_I^{\perp} \cap \Shell(\tfrac 12 \rho_I,\rho_{I+1})
    \subseteq
    \Cone(\delta,P,\tfrac 12 \rho_I,\rho_{I+1}) \cap F_I \,.
  \end{displaymath}
  Using Corollary~\ref{cor:sph-in-cone} we can rotate $S$ inside $\mathbf{C}$,
  so that it lies in~$H^{\perp}$. Then we decrease the radius of~$S$ to the
  value e.g. $\tfrac 34 \rho_I \in (\frac 12 \rho_{I-1},\rho_I)$. Applying the
  inductive assumption we obtain condition \eqref{cond:link} for $i = I+1$.

  The set $\Sigma$ is compact and $\rho_i$ grows geometrically, so our
  construction has to end eventually. Otherwise we would find arbitrary large
  spheres, which are linked with $\Sigma$ but this contradicts compactness.
\end{proof}



\subsection{The proof of Theorem~\ref{thm:uahlreg}}

\begin{proof}[Proof of~Theorem~\ref{thm:uahlreg}]
  From Theorem~\ref{thm:regularity} we already know that $\Sigma$ is
  an~embedded, $C^{1,\lambda/\kappa}$-smooth manifold without boundary. Hence,
  it is also $(\delta,m)$-admissible for any $\delta \in (0,1)$
  (cf.~Example~\ref{ex:mfld}) and~$\Sigma^* = \Sigma$. Set $\delta = \frac 14$,
  then Corollary~\ref{cor:uahlreg} gives us Theorem~\ref{thm:uahlreg} where
  $R_0$ can be any number less than $d(\Sigma) = \inf_{x_0 \in \Sigma} d(x_0)$.
  Hence, it suffices to show that $d(\Sigma)$ can be bounded below independently
  of~$\Sigma$.

  From Proposition~\ref{prop:eta-d-balance} we know that $d(\Sigma)$ must
  satisfy \eqref{est:eta-d} with $\eta = \eta_0$ defined
  by~\eqref{def:eta}. Hence, we~already have a~positive lower bound on
  $d(\Sigma)$. We only need to show that it does not depend on $A_{\ahl}$.

  Fix a~point $x_0 \in \Sigma$ such that $d(x_0) < (1 + \varepsilon)d(\Sigma)$
  for some small $\varepsilon \in (0,1)$.
  Proposition~\ref{prop:big-proj-fat-simp} gives us an
  $(\eta_0,d(x_0))$-voluminous simplex $\simp(x_0, \ldots, x_{m+1})$. Recall
  that $\varsigma_{m+1} < \tfrac 14$ was defined by~\eqref{def:varsigma}. For
  each $i = 1, 2, \ldots, m+1$ we have
  \begin{displaymath}
    \varsigma_{m+1} d(x_0) \le \varsigma_{m+1} (1+\varepsilon) d(\Sigma) \le \frac 12 d(\Sigma) \le \frac 12 d(x_i) \,.
  \end{displaymath}
  Hence, applying Corollary~\ref{cor:uahlreg} we get
  \begin{displaymath}
    \HM^m(\Sigma \cap \Ball(x_i, \varsigma_{m+1} d(x_0)))
    \ge \frac{\sqrt{15^m}}{4^m} \omega_m (\varsigma_{m+1} d(x_0))^m \,.
  \end{displaymath}
  Now we can repeat the calculation from the proof
  of~Proposition~\ref{prop:eta-d-balance}, replacing $A_{\ahl}$ with
  $\frac{\sqrt{15^m}}{4^m} \omega_m$, to obtain
  \begin{displaymath}
    E \ge \left(\frac{\sqrt{15^m}}{4^m} \omega_m  (\varsigma_{m+1} d(x_0))^m \right)^l
    \left( \frac{3\eta_0^{m+1}}{4(m+1)! d(x_0)} \right)^p 
    = C(m,l,p) d(x_0)^{ml - p} \,.
  \end{displaymath}
  Therefore
  \begin{equation}
    \label{def:R0}
    \tfrac 12 d(\Sigma) 
    = \tfrac 12 \lim_{\varepsilon \to 0^+} (1 + \varepsilon) d(\Sigma) 
    \ge \tfrac 12 d(x_0) \ge C(m,l,p) E^{\frac{-1}{\lambda}} = R_0 \,. \qedhere
  \end{equation}
\end{proof}

\subsection{Removing the dependence on $M_{\theta\beta}$ and $R_{\theta\beta}$}
In this section we show that if $\Sigma$ is $m$-fine with finite
$\E_p^l$-energy, then the constants $M_{\theta\beta}$ and $R_{\theta\beta}$ from
Theorem~\ref{thm:regularity} can be chosen depending solely on $E$, $m$, $l$
and~$p$.

\begin{prop}
  \label{prop:abs-M-theta}
  Let $\Sigma \subseteq \R^n$ be an $m$-fine set such that $\E_p^l(\Sigma) \le E
  < \infty$ for some $p > ml$. Then there exists $R_1 = R_1(E,m,l,p)$ such that
  $\Sigma$ satisfies \eqref{def:ahlfors} and \eqref{def:theta-beta} with
  constants $M_{\theta\beta} = 5$, $R_{\theta\beta} = R_{\ahl} = R_1$ and
  $A_{\ahl} = \frac{\sqrt{15^m}}{4^m}\omega_m$.
\end{prop}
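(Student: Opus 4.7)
The Ahlfors regularity claim with $A_{\ahl} = (\sqrt{15}/4)^m \omega_m$ and $R_{\ahl} = R_1 \le R_0(E,m,l,p)$ follows immediately from Theorem~\ref{thm:uahlreg}, so the real content is to produce a universal constant $M_{\theta\beta} = 5$ and a uniform radius $R_{\theta\beta} = R_1$ in condition \eqref{def:theta-beta}.

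My plan leverages the fact that, by Theorem~\ref{thm:regularity}, $\Sigma$ is already a closed $C^{1,\lambda/\kappa}$-manifold, hence by Example~\ref{ex:mfld} it is $(\delta,m)$-admissible for every $\delta \in (0,1)$. I can therefore apply Proposition~\ref{prop:big-proj-fat-simp} with an admissibility parameter $\delta$ chosen \emph{in terms of the scale} rather than fixed a priori. For $x \in \Sigma$ and $r \le R_1$ (with $R_1$ to be chosen only in terms of $E$, $m$, $l$, $p$) I set $\delta_r = C_0\,r^{\lambda/\kappa}$ with $C_0=C_0(E,m,l,p)$ calibrated against Proposition~\ref{prop:beta-est} so that $\delta_r \sim \nbeta(x,r)$. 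Rerunning the argument behind Theorem~\ref{thm:uahlreg} (now with $\eta_0(\delta_r,m)\sim \delta_r$ inserted into Proposition~\ref{prop:eta-d-balance}) shows that the stopping distance from Proposition~\ref{prop:big-proj-fat-simp} satisfies $d(x,\delta_r)\ge c(E,m,l,p)\,\delta_r^{\kappa/\lambda}\gtrsim r$, so after shrinking $R_1$ we have $r \le d(x,\delta_r)/2$. Proposition~\ref{prop:big-proj-fat-simp} then delivers an $m$-plane $H=H(x,r)$ with $(x+H)\cap\Ball(x,\sqrt{1-\delta_r^2}\,r)\subseteq \proj_{x+H}(\Sigma\cap\Ball(x,r))$, and, by inspecting case~B of its proof, $\Sigma\cap\Shell(r/2,2r)$ additionally lies within a tube of width $\lesssim \delta_r r$ around $x+H$.

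Let $H_\beta$ achieve the $\beta$-infimum at scale $r$, so $\Sigma\cap\CBall(x,r)\subseteq x+H_\beta+\Ball_{\nbeta(x,r)r}$. Since both $H$ and $H_\beta$ are slab approximations to $\Sigma$ with errors of order $\delta_r \sim \nbeta(x,r)$, and since the uniform lower Ahlfors regularity from Theorem~\ref{thm:uahlreg} furnishes $m+1$ well-spread points of $\Sigma\cap\Shell(r/2,r)$ forming a voluminous $m$-simplex close to both affine planes, Proposition~\ref{prop:red-ang} yields $\dgras(H,H_\beta)\le C\,\nbeta(x,r)$. For $y\in(x+H_\beta)\cap\CBall(x,r)$ I then set $y'=\proj_{x+H}(y)$, invoke the big-projection to lift $y'$ to some $z\in\Sigma$ with $|z-y'|\le C\nbeta(x,r)\cdot r$, and combine it with $|y-y'|\le \dgras(H,H_\beta)\cdot r$ to conclude $|z-y|\le C''\nbeta(x,r)\cdot r$; by taking $R_1$ small enough one can arrange $C''\le 4$, which yields $\ntheta(x,r)\le 5\nbeta(x,r)$.

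I expect the main obstacle to be the boundary effect: the big-projection conclusion only covers $(x+H)\cap\Ball(x,\sqrt{1-\delta_r^2}\,r)$, which falls short of $(x+H)\cap\CBall(x,r)$ by a margin of size $\sim \delta_r r \sim \nbeta(x,r) r$, so points $y$ near the equator of $\CBall(x,r)$ must be handled by applying the big-projection at a slightly enlarged scale $\rho=r(1+c\,\nbeta(x,r))$ and verifying that $\rho$ still lies below the stopping distance $d(x,\delta_\rho)/2$. Making every constant genuinely uniform in~$\Sigma$ reduces to checking that this scale-inflation is compatible with the choice of $R_1$.
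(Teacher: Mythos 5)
Your handling of the Ahlfors part is fine, but the $\ntheta\le 5\nbeta$ part has a genuine quantitative gap: throughout, you control the hole-filling error by $\delta_r r\sim r^{1+\lambda/\kappa}$, i.e.\ by the \emph{worst-case upper bound} for $\nbeta(x,r)\,r$ supplied by Proposition~\ref{prop:beta-est}, not by the actual value $\nbeta(x,r)\,r$. The big-projection property of Proposition~\ref{prop:big-proj-fat-simp} lifts $y'$ to a point $z\in\Sigma$ whose distance from $x+H$ is only bounded via the cone condition by $\delta_r|z-x|$, and likewise your comparison gives $\dgras(H,H_\beta)\lesssim\delta_r+\nbeta(x,r)$, not $\lesssim\nbeta(x,r)$. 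At a point where $\Sigma$ happens to be nearly flat at scale $r$, $\nbeta(x,r)$ can be arbitrarily small compared with $r^{\lambda/\kappa}$, so your argument yields at best $\ntheta(x,r)\le C(E,m,l,p)\,r^{\lambda/\kappa}$, which is strictly weaker than $\ntheta(x,r)\le 5\nbeta(x,r)$. The closing step ``by taking $R_1$ small enough one can arrange $C''\le 4$'' is false: $C''$ is a dimensionless multiplicative constant in front of $\nbeta(x,r)\,r$ and does not decrease when $r$ does. (A similar slip occurs earlier: the ratio $d(x,\delta_r)/r$ is bounded below by a constant independent of the scale, so shrinking $R_1$ cannot force $r\le d(x,\delta_r)/2$; you would have to enlarge the calibration constant in $\delta_r$, which in turn worsens the tube width.)

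The paper avoids all of this by never making the admissibility parameter scale-dependent. It keeps $\delta=\frac 14$, takes $P\in\BAP(x_0,r)$ and $\beta=2\nbeta(x_0,r)$, and uses that the conical cap $\Cone(\beta/\gamma,P,\gamma r,r)$ with $\gamma=\frac{\sqrt{15}}{4}$ is disjoint from $\Sigma$ \emph{by the very definition of $\nbeta$}. The linked sphere $\Sphere(x_0,r)\cap(x_0+H^{\perp})$ coming from the construction is rotated inside that cap into a sphere perpendicular to $P$, its radius is shrunk to $\beta r$, and the small sphere is then translated along $P$; Proposition~\ref{prop:intpoint} places a point of $\Sigma$ within distance $\beta r=2\nbeta(x_0,r)\,r$ of every point of $(x_0+P)\cap\CBall(x_0,\sqrt{1-\beta^2}\,r)$. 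This ties the error directly to the actual $\nbeta(x_0,r)$ and produces the constant $5$. To salvage your route you would need the filling radius to be a fixed multiple of $\nbeta(x_0,r)\,r$ rather than of $\delta_r r$, which essentially forces you back to the paper's linking and homotopy argument.
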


\begin{proof}
  From Theorem~\ref{thm:regularity} and Theorem~\ref{thm:uahlreg} we already
  know that $\Sigma$ is $(\frac 14,m)$-admissible with $\Sigma^* = \Sigma$ and
  satisfies~\eqref{def:ahlfors} with $R_{\ahl} = R_0$ and $A_{\ahl} =
  \frac{\sqrt{15^m}}{4^m}\omega_m$. Hence, by Proposition~\ref{prop:beta-est},
  we also have
  \begin{displaymath}
    \forall r \le R_0 \ 
    \forall x \in \Sigma
    \quad
    \nbeta(x,r) \le C(m,l,p) E^{\frac 1\kappa} r^{\frac{\lambda}{\kappa}} \,.
  \end{displaymath}
  Fix a~point $x_0 \in \Sigma$ and a~radius $r \le R_0$. Choose some $m$-plane
  $P \in G(n,m)$ such that
  \begin{equation}
    \label{eq:P-def}
    \forall y \in \Sigma \cap \CBall(x_0,r)\ \ 
    |\pperp_P(y - x_0)| \le \nbeta(x_0,r) \,.
  \end{equation}
  For brevity we set $\beta = 2\nbeta(x_0,r)$ and $\gamma =
  \frac{\sqrt{15}}{4}$. Inspecting the proof
  of~Proposition~\ref{prop:big-proj-fat-simp} we can find $i \in \N$ such that
  $\rho_i \le r < \rho_{i+1}$. We set $H = H_i$.  Let $y \in \R^n$ be any point
  such that $y - x_0 \in H$ and $|y - x_0| = \gamma r$. We see that $\Sphere(y,
  \tfrac 14 r) \cap (y + H^{\perp})$ is linked with $\Sigma$, hence
  (cf. Proposition~\ref{prop:intpoint}) there exists $z \in \Sigma \cap \Ball(y,
  \tfrac 14 r) \cap (y + H^{\perp})$. Note that $\gamma r \le |z - x_0| \le r$,
  so
  \begin{displaymath}
    \frac{|\pperp_P(z - x_0)|}{|z - x_0|} 
    \le \frac{\beta r}{\gamma r}
    = \frac{\beta}{\gamma} \,,
    \quad \text{hence} \quad
    (z - x_0) \in \Cone \left(
      \big( 1 - \tfrac{\beta^2}{\gamma^2} \big)^{\frac 12}, P^{\perp}
    \right) \cap \Cone(\gamma,H^{\perp}) \,.
  \end{displaymath}
  To apply Proposition~\ref{prop:two-cones} we need to ensure the condition
  \begin{equation}
    \label{cond:R-Sigma}
    \sqrt{1 - \gamma^2} + \tfrac{\beta}{\gamma} 
    \le (1 - \tfrac{\beta}{\gamma})\sqrt{1 - \left(\tfrac{\beta}{\gamma}\right)^2} 
    \iff 
    \beta 
    \le \gamma \left(
      (1 - \tfrac{\beta}{\gamma})\sqrt{1 - \left(\tfrac{\beta}{\gamma}\right)^2} - \sqrt{1 - \gamma^2}
    \right) \,.
  \end{equation}
  Substituting $\Psi = \frac{\beta}{\gamma}$ in~\eqref{cond:R-Sigma} and
  recalling that $\gamma = \frac{\sqrt{15}}4$ we obtain the following inequality
  \begin{equation}
    \label{eq:Psi}
    \Psi \le (1 - \Psi)\sqrt{1 - \Psi^2} - \frac 14 \,.
  \end{equation}
  Note that if $\Psi \to 0$ then the right-hand side converges to $\frac
  34$. Let $\Psi_0$ be the smallest, positive root of~the equation $\Psi = (1 -
  \Psi)\sqrt{1 - \Psi^2} - \frac 14$. Then any $\Psi \in (0,\Psi_0)$ satisfies
  \eqref{eq:Psi}. Recall that $\tfrac 12 \beta = \nbeta(x,r) \le C(m,l,p)
  E^{1/\kappa} r^{\lambda/\kappa}$, so to ensure condition \eqref{cond:R-Sigma}
  it suffices to impose the following constraint
  \begin{equation}
    \label{def:R1}
    r \le 
    \min \left\{
      \left( \frac{\gamma \Psi_0}{C(m,l,p)} \right)^{\frac{\kappa}{\lambda}} E^{\frac{-1}{\lambda}},
      R_0
    \right\}
    = R_1(E,m,l,p) \,.
  \end{equation}
  Now, for such $r$ we can use Proposition~\ref{prop:two-cones} to obtain
  \begin{displaymath}
    H^{\perp} \subseteq \Cone\big(\tfrac 14, H\big) \cap \Cone\big(\tfrac{\beta}{\gamma}, P\big) \,.
  \end{displaymath}

  \begin{figure}[!htb]
    \centering
    \includegraphics{ahlreg9.mps}
    \caption{If $\beta$ is small enough, then the cone
      $\Cone(\tfrac{8\beta}{7\gamma}, P)$ contains $H^{\perp}$ and we can
      continuously transform $S_1$ into $S_3$ inside the conical cap
      $\Cone(\tfrac{8\beta}{7\gamma}, P, \frac 78r\gamma, \frac 78r)$.}
    \label{F:adm-fine}
  \end{figure}
  We set $\mathbf{C} = x_0 + \Cone\big(\tfrac 14, H, \tfrac 12 \rho_i,
  \rho_{i+1}\big)$ and $S_1 = \Sphere(x_0,r) \cap (x_0 + H^{\perp}) \subseteq
  \mathbf{C}$.  Observe that $\mathbf{C} \cap \Sigma = \emptyset$ and
  $\oplk(S_1,\Sigma) = 1$. Using Corollary~\ref{cor:sph-in-cone} we rotate $S_1$
  into $S_2 = \Sphere(x_0,r) \cap (x_0 + P^{\perp})$ (see
  Figure~\ref{F:adm-fine}) inside $\Cone(\tfrac{\beta}{\gamma}, P, r\gamma,
  r)$. Note that for $x \in \Sigma$ such that $|x - x_0| > \gamma r$ we have
  \begin{displaymath}
    \frac{\pperp_P(x - x_0)}{|x - x_0|} 
    < \frac{\beta r}{\gamma r} = \frac{\beta}{\gamma}\,,
  \end{displaymath}
  hence the conical cap $\Cone\big(\tfrac{\beta}{\gamma}, P, \gamma r, r\big)$
  does not intersect $\Sigma$ and the resulting sphere $S_2$ is still linked
  with $\Sigma$. Next we decrease the radius of~$S_2$ to the value $\beta r$
  obtaining another sphere $S_3 = \Sphere(x_0,\beta r) \cap (x_0 + P^{\perp})$
  which is also linked with $\Sigma$.

  We can translate $S_3$ along any vector $v \in P$ with $|v| \le \sqrt{1 -
    \beta^2}r$ without changing the linking number. This way we see that for any
  point $w \in (x_0 + P) \cap \CBall(x_0,\sqrt{1 - \beta^2}r)$ there
  exists a~point $z \in \Sigma$ such that $|z-w| \le \beta r$.

  For any other point $w \in (x_0 + P)$ with $\sqrt{1 - \beta^2}r \le |w - x_0|
  \le r$ we set 
  \begin{displaymath}
    \tilde{w} = w - (w - x_0)|w - x_0|^{-1} (1 - \sqrt{1 - \beta^2})r \,,
  \end{displaymath}
  so that $|\tilde{w} - x_0| \le \sqrt{1 - \beta^2}r$. Then we find $z \in
  \Sigma$ such that $|\tilde{w} - z| \le \beta r$ and we obtain the estimate
  \begin{align*}
    |z - w| &\le |z - \tilde{w}| + |\tilde{w} - w|
    \le \beta r + (1 - \sqrt{1 - \beta^2}) r \\
    &= r \left( \beta + \frac{\beta^2}{1 + \sqrt{1 - \beta^2}} \right)
    \le 2 \beta r = 4 \nbeta(x,r) r \,.
  \end{align*}
  This implies that $\HD(\Sigma \cap \CBall(x_0,r), (x_0 + P) \cap
  \CBall(x_0,r)) \le 5 \nbeta(x_0,r)$. Therefore the infimum over all $H \in
  G(n,m)$ must be even smaller, so $\ntheta(x_0,r) \le 5 \nbeta(x_0,r)$ for any
  $r \le R_{\theta\beta} = R_1$ and we can safely set $M_{\theta\beta} = 5$.
\end{proof}



\section{Uniform estimates on the local graph representations}
\label{sec:tangent-planes}

For the sake of~brevity we introduce the following notation
\begin{displaymath}
  \proj_x = \proj_{T_x\Sigma}
  \qquad \text{and} \qquad
  \pperp_x = \pperp_{T_x\Sigma} \,,
\end{displaymath}
where $x \in \Sigma$. The main result of~this section is
\begin{thm}
  \label{thm:C1tau}
  Let $\Sigma \subseteq \R^n$ be an $m$-fine set. If $\E_p^l(\Sigma) \le E <
  \infty$ for some $p > ml$, then $\Sigma$ is a~closed
  $C^{1,\lambda/\kappa}$-manifold (by~Theorem~\ref{thm:regularity}) and there
  exist constants $R_{\laka} = R_{\laka}(E,m,l,p)$ and~$C_{\laka} =
  C_{\laka}(E,m,l,p)$ such that for all $x \in \Sigma$ there exists a~function
  $F_x : T_x\Sigma \to (T_x\Sigma)^{\perp}$ of class $C^{1,\lambda/\kappa}$ such
  that
  \begin{displaymath}
    (\Sigma - x) \cap \Ball_{R_{\laka}} = \left\{ (y,F_x(y)) \in \R^n : y \in T_x\Sigma \right\} \cap \Ball_{R_{\laka}}
  \end{displaymath}
  \begin{displaymath}
    \text{and} \quad
    \forall y,z \in T_x\Sigma
    \quad
    \| DF_x(y) - DF_x(z) \| \le C_{\laka} |y-z|^{\frac{\lambda}{\kappa}} \,.
  \end{displaymath}
\end{thm}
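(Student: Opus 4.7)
The starting observation is that Proposition~\ref{prop:abs-M-theta} combined with Proposition~\ref{prop:beta-est} gives, for every $x \in \Sigma$ and every $r \le R_1(E,m,l,p)$,
\begin{equation}
\ntheta(x,r) \le 5\,\nbeta(x,r) \le C_0(E,m,l,p)\, r^{\lambda/\kappa}.
\end{equation}
The heart of the proof is to upgrade this into a uniform tangent-plane Hölder estimate
\begin{equation}
\dgras(T_x\Sigma, T_y\Sigma) \le C_1(E,m,l,p)\, |x-y|^{\lambda/\kappa}
\end{equation}
for $x, y \in \Sigma$ with $|x-y|$ below a uniform threshold. Once this is in hand, the construction of $F_x$ with the claimed size $R_{\laka}$ and Hölder norm $C_{\laka}$ follows by a fairly routine inverse-function-type argument.

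To obtain the tangent-plane oscillation at a single point, I would fix $x \in \Sigma$ and, along the dyadic sequence $r_k = R_1 2^{-k}$, pick a best approximating plane $H_k \in \BAP(x, r_k)$. By Theorem~\ref{thm:regularity} the set $\Sigma$ is a closed $C^1$ manifold, hence $(\tfrac14, m)$-admissible (Example~\ref{ex:mfld}); Proposition~\ref{prop:big-proj-fat-simp} then produces at each scale an $(\eta_0, r_k)$-voluminous simplex $\simp(x, x_1^{(k)}, \ldots, x_{m+1}^{(k)})$ with $\eta_0 = \eta_0(m)$. The $\theta$-estimate forces each base vector $v_i^{(k)} = x_i^{(k)} - x$ to satisfy $|\pperp_{H_k}(v_i^{(k)})| \le C r_k^{1+\lambda/\kappa}$ and similarly for $H_{k+1}$. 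Applying Gram-Schmidt within $V_k = \opspan\{v_1^{(k)}, \ldots, v_m^{(k)}\}$ and using the $(\eta_0, r_k)$-voluminousness to control the coefficients, the orthonormalized basis $(\hat v_i)$ still satisfies $|\pperp_{H_k}(\hat v_i)|, |\pperp_{H_{k+1}}(\hat v_i)| \le C r_k^{\lambda/\kappa}$. Lemma~\ref{lem:proj-ang} then yields $\dgras(V_k, H_k), \dgras(V_k, H_{k+1}) \le C r_k^{\lambda/\kappa}$, whence
\begin{equation}
\dgras(H_k, H_{k+1}) \le C'(E,m,l,p)\, r_k^{\lambda/\kappa}.
\end{equation}
The resulting geometric series shows $(H_k)$ is Cauchy in $G(n,m)$, and its limit must coincide with the pointwise tangent plane $T_x\Sigma$ provided by Theorem~\ref{thm:regularity}; consequently $\dgras(T_x\Sigma, H_k) \le C'' r_k^{\lambda/\kappa}$ for every $k$.

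Running the same construction with a voluminous simplex anchored in $\Sigma \cap \Ball(x, 2r) \cap \Ball(y, 2r)$, for $|x-y| = r$ below a uniform threshold (feasible thanks to the uniform Ahlfors bound from Theorem~\ref{thm:uahlreg}), yields an intermediate $m$-plane within $C r^{\lambda/\kappa}$ of both $T_x\Sigma$ and $T_y\Sigma$, so the triangle inequality delivers the desired oscillation. For the graph representation, I would choose $R_{\laka}$ so small that the tangent-plane oscillation on $\Ball(x, R_{\laka})$ stays below $\tfrac{1}{10}$; then $\proj_x$ restricts to a bilipschitz injection from $\Sigma \cap \Ball(x, R_{\laka})$ onto its image in $T_x\Sigma$, and inversion furnishes $F_x$. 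Since $DF_x(y)$ is the graphing of $T_z\Sigma$ over $T_x\Sigma$ at $z = y + F_x(y) \in \Sigma$, the Hölder continuity of $z \mapsto T_z\Sigma$ transfers, with the same exponent and a constant depending only on $E, m, l, p$, to Hölder continuity of $DF_x$, giving $C_{\laka} = C_{\laka}(E,m,l,p)$.

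The principal obstacle is the bookkeeping across the dyadic iteration: at each scale one needs the Gram-Schmidt basis of $V_k$ to meet the $\red$-basis hypothesis of Proposition~\ref{prop:red-ang} with $\varepsilon \le \ere(m)$, uniformly in $k$. In practice this forces a further explicit reduction of $R_1$ depending only on $E, m, l, p$, after which the geometric summation and the identification of the limit with $T_x\Sigma$ go through without further difficulty.
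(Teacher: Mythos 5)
Your overall strategy (uniform decay of $\nbeta$ and $\ntheta$ $\Rightarrow$ Cauchy sequence of best approximating planes $\Rightarrow$ H\"older oscillation of tangent planes $\Rightarrow$ graph representation) is the same as the paper's, but two of your key steps do not work as stated.

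First, you claim that Proposition~\ref{prop:big-proj-fat-simp} ``produces at each scale an $(\eta_0,r_k)$-voluminous simplex'' anchored at $x$. It does not: that proposition yields a single $(\eta_0,d)$-voluminous $(m+1)$-simplex at the \emph{stopping distance} $d=d(x_0)$, and only the big-projection property at scales below $\frac 12 d$. Worse, what you ask for is impossible under the hypotheses: by Corollary~\ref{cor:eta-beta} (or Proposition~\ref{prop:eta-d-balance}), an $(\eta,r)$-voluminous $(m+1)$-simplex with vertices on $\Sigma$ forces $\eta \le C\,\nbeta(x,r)^{1/(m+1)} \to 0$ as $r\to 0$, so a fixed $\eta_0>0$ cannot survive along a dyadic sequence $r_k\to 0$. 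You appear to conflate the $(m+1)$-dimensional voluminous simplices (which measure non-flatness and must degenerate) with a non-degenerate $m$-dimensional secant configuration spanning an intermediate plane. The latter is what you actually need, and it is supplied not by Proposition~\ref{prop:big-proj-fat-simp} but by the bilateral estimate $\ntheta(y,r_1)\le 5\nbeta(y,r_1)$: for each $v\in H_1$ of length $\approx r_1$ there is a point $z\in\Sigma$ within $5\nbeta(y,r_1)r_1$ of $y+v$, which gives $|\pperp_{H_0}(v)|\lesssim E^{1/\kappa}r_0^{1+\lambda/\kappa}$ and hence, via Proposition~\ref{prop:red-ang}, the bound $\dgras(H_0,H_1)\lesssim E^{1/\kappa}r_0^{\lambda/\kappa}$. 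This is the content of the paper's Lemma~\ref{lem:bap-osc}; with it your dyadic summation and the two-point comparison go through.

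Second, the passage to the graph representation is too quick. Small oscillation of $z\mapsto T_z\Sigma$ on $\Ball(x,R_{\laka})$ does \emph{not} by itself make $\proj_x$ injective on $\Sigma\cap\Ball(x,R_{\laka})$: two nearly parallel sheets of $\Sigma$ at small mutual distance would have tangent oscillation as small as you like and yet project non-injectively, and neither the $\theta$-bound nor connectivity has been invoked to exclude this. The missing ingredient is the paper's Lemma~\ref{lem:1point-rad}: if $y,z\in\Sigma$ satisfy $(y-z)\perp T_x\Sigma$, then the pointwise flatness estimate $|\pperp_x(y-x)|\le C_{tp}E^{1/\kappa}|y-x|^{1+\lambda/\kappa}$ (Corollary~\ref{cor:tan-point}) forces $|y-z|\ll|y-x|$, whence $y-z$ lies almost inside $T_y\Sigma$ while being orthogonal to $T_x\Sigma$; this makes $\dgras(T_x\Sigma,T_y\Sigma)$ close to $1$ and contradicts the oscillation bound unless $|x-y|\gtrsim E^{-1/\lambda}$. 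Only after this quantitative injectivity radius is established can one define $F_x$ by inverting $\proj_x$ and transfer the H\"older bound from $z\mapsto T_z\Sigma$ to $DF_x$ (which also requires the two-sided comparison of $\|D\varphi(x)-D\varphi(y)\|$ with $\dgras(T_{\varphi(x)}\Sigma,T_{\varphi(y)}\Sigma)$ as in Lemma~\ref{lem:tp-deriv}). You should supply both of these steps.
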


To prove this theorem we fix a~point $x \in \Sigma$ and for each radii $r > 0$
we choose an $m$-plane $P(x,r)$. Then we use the fact that $\ntheta(x,r) \le
M_{\theta\beta} \nbeta(x,r)$ together with Proposition~\ref{prop:beta-est} to
show that $P(x,r)$ converge to the tangent plane $T_x\Sigma$, when $r \to
0$. This also gives a~bound on the oscillation of~$T_x\Sigma$. Then we derive
Lemma~\ref{lem:1point-rad}, which says that at~some small scale we cannot have
two distinct points $y$ and $z$ of~$\Sigma$ such that the vector $v = (y - z)$
is orthogonal to $T_x\Sigma$. Any such vector $v$ would be close to the tangent
plane $T_{z}\Sigma$ and this would violate the bound on the oscillation
of~tangent planes proved earlier. From here, it follows that there exists
a~small radius $R_{\laka}$ such that $\Sigma \cap \Ball(x,R_{\laka})$ is a~graph
of~some function $F_x$, which is of~class~$C^{1,\lambda/\kappa}$
by~Theorem~\ref{thm:regularity}.

In the sequel of this section we always assume that $\Sigma$ satisfies
hypothesis of~Theorem~\ref{thm:C1tau}.

\subsection{Estimates on the oscillation of tangent planes}
Combining Propositions~\ref{prop:abs-M-theta} and~\ref{prop:beta-est} we see
that
\begin{equation}
  \label{est:beta-indep}
  \forall r \le R_1 \ 
  \forall x \in \Sigma
  \quad
  \ntheta(x,r) \le 5 \nbeta(x,r) \le 5 C(m,l,p) E^{\frac 1\kappa} r^{\frac{\lambda}{\kappa}} \,.
\end{equation}
Let $\tilde{R}_1 = \tilde{R}_1(E,m,l,p) \in (0,R_1]$ be such that $5 C(m,l,p)
E^{\frac 1\kappa} r^{\frac{\lambda}{\kappa}} \le \frac 14$ for all $r \le
\tilde{R}_1$, so $\tilde{R}_1 = C_0 E^{-1/\lambda}$ for some $C_0 = C_0(m,l,p)$.

\begin{lem}
  \label{lem:bap-osc}
  Choose a~point $x \in \Sigma$ and fix some $r_0 \le \tilde{R}_1$. Choose
  another point $y \in \Sigma \cap \CBall(x,\frac 12 r_0)$ and some $r_1 \in
  \left[ \frac 12 r_0, r_0 - |x-y| \right]$. Let $H_0 \in \BAP(x,r_0)$ and $H_1
  \in \BAP(y,r_1)$. Then there exists a constant $C_{hh} = C_{hh}(m,l,p)$ such
  that
  \begin{displaymath}
    \dgras(H_0,H_1) \le C_{hh} E^{1/\kappa} r_0^{\tau} \,.
  \end{displaymath}
\end{lem}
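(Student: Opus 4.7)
The plan is to exhibit a near-orthonormal basis of $H_1$ whose vectors project almost to zero under $\pperp_{H_0}$, and then apply Lemma~\ref{lem:proj-ang}. The main tool for converting plane approximation into vector approximation is the $\HD$-bound built into the definition of $\BAP$, combined with the universal estimate
\begin{displaymath}
  \ntheta(x,r) \le 5 \nbeta(x,r) \le 5 C(m,l,p) E^{1/\kappa} r^{\lambda/\kappa}
\end{displaymath}
from Proposition~\ref{prop:abs-M-theta} and Proposition~\ref{prop:beta-est}, valid for all $r \le R_1$ (hence for all $r \le \tilde{R}_1$, where the right-hand side is moreover $\le \tfrac 14$).

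First I would fix an orthonormal basis $(v_1,\dots,v_m)$ of $H_1$ and consider the test points $\tilde{y}_i = y + r_1 v_i \in (y+H_1) \cap \CBall(y,r_1)$. Since $H_1 \in \BAP(y,r_1)$, the $\HD$-inequality yields points $z_i \in \Sigma \cap \CBall(y,r_1)$ with $|z_i - \tilde{y}_i| \le r_1 \ntheta(y,r_1)$. The assumption $r_1 \le r_0 - |x-y|$ forces $\CBall(y,r_1) \subseteq \CBall(x,r_0)$, so $z_i \in \Sigma \cap \CBall(x,r_0)$; in particular also $y \in \Sigma \cap \CBall(x,r_0)$. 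Because $H_0 \in \BAP(x,r_0)$, each point of $\Sigma \cap \CBall(x,r_0)$ lies within distance $r_0 \ntheta(x,r_0)$ of the affine plane $x + H_0$, which translates to
\begin{displaymath}
  |\pperp_{H_0}(z_i - x)| \le r_0 \ntheta(x,r_0)
  \quad \text{and} \quad
  |\pperp_{H_0}(y - x)| \le r_0 \ntheta(x,r_0) \,.
\end{displaymath}

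Writing $r_1 v_i = (z_i - y) - (z_i - \tilde{y}_i)$ and applying $\pperp_{H_0}$, the triangle inequality yields
\begin{displaymath}
  r_1 |\pperp_{H_0}(v_i)| \le |\pperp_{H_0}(z_i - x)| + |\pperp_{H_0}(y - x)| + |z_i - \tilde{y}_i|
  \le 2 r_0 \ntheta(x,r_0) + r_1 \ntheta(y,r_1) \,.
\end{displaymath}
Dividing by $r_1 \ge r_0/2$ and inserting the universal $\theta$-bound (recall $r_1 \le r_0 \le \tilde{R}_1$) gives $|\pperp_{H_0}(v_i)| \le C(m,l,p) E^{1/\kappa} r_0^{\lambda/\kappa}$.

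Finally, since this bound is bounded by $\tfrac14$ (hence $<1$) by the choice of $\tilde{R}_1$, Lemma~\ref{lem:proj-ang} applies directly to the orthonormal basis $(v_1,\dots,v_m)$ of $H_1$ and delivers $\dgras(H_0,H_1) \le C_\pi \cdot C(m,l,p) E^{1/\kappa} r_0^{\lambda/\kappa}$, which is the claimed estimate with $\tau = \lambda/\kappa$ and a suitable $C_{hh}(m,l,p)$. I expect no genuine obstacle here: the argument is essentially bookkeeping, and the only thing to be careful about is confirming $\CBall(y,r_1) \subseteq \CBall(x,r_0)$ (forced by the hypothesis $r_1 \le r_0 - |x-y|$) and that the lower bound $r_1 \ge r_0/2$ lets us absorb the denominator without losing the exponent $\lambda/\kappa$.
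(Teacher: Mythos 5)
Your argument is correct and follows essentially the same route as the paper's proof: use the two-sided Hausdorff bound $\ntheta \le 5\nbeta$ to produce points of $\Sigma$ near $y+H_1$ inside $\CBall(x,r_0)$, bound their $\pperp_{H_0}$-components, and conclude via a Grassmannian lemma. The only (harmless, if anything cleaner) differences are that you take test points at the full radius $r_1$ and invoke Lemma~\ref{lem:proj-ang} directly on an orthonormal basis, whereas the paper shrinks the radius to $r_1(1-5\beta_1)$ and routes the conclusion through Proposition~\ref{prop:red-ang}.
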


\begin{figure}[!htb]
  \centering
  \includegraphics{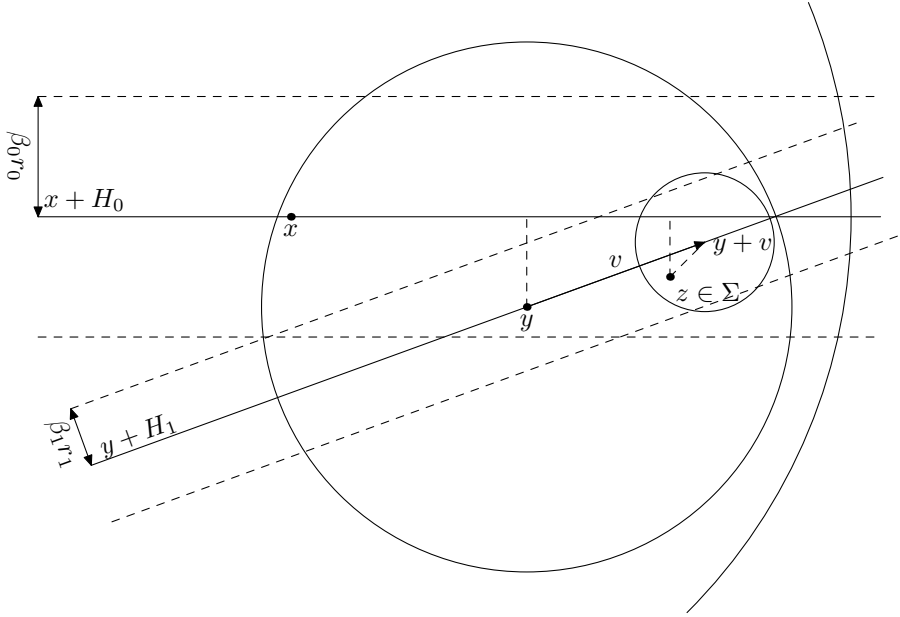}
  \caption{The existence of~$z \in \Sigma$ is guaranteed by the
    condition~\eqref{def:theta-beta}. This allows us to estimate
    $\dgras(H_0,H_1)$.}
  \label{F:bap-osc}
\end{figure}

\begin{proof}
  Set $\beta_0 = \nbeta(x,r_0)$ and $\beta_1 = \nbeta(y,r_1)$. Note that $r_1
  \le \tilde{R}_1$, so $5\beta_1 \le \frac 14$. Let $v \in H_1$ be any vector
  of~length $|v| = r_1 (1-5\beta_1)$. Since $\ntheta(y,r_1) \le 5\beta_1$, there
  exists a~point $z \in \Sigma \cap \CBall(y+v, 5\beta_1 r_1)$. Hence $|(y+v) -
  z| \le 5\beta_1 r_1$ (see Figure~\ref{F:bap-osc}). Note that $\CBall(y+v,
  5\beta_1 r_1) \subseteq \CBall(y,r_1) \subseteq \CBall(x,r_0)$. Therefore
  $\dist(z,x + H_0) = |\pperp_{H_0}(z - x)| \le \beta_0 r_0$ and we obtain the
  estimate
  \begin{align*}
    |\pperp_{H_0}(v)| &\le |\pperp_{H_0}((y-x)+v)| + |\pperp_{H_0}(y-x)| \\
    &\le |((y-x)+v) - (z-x)| + |\pperp_{H_0}(z-x)| + |\pperp_{H_0}(y-x)| \\
    &\le 5 \beta_1 r_1 + \beta_0 r_0 + \beta_0 r_0 \le 7 C E^{1/\kappa} r_0^{1+\lambda/\kappa} \,.
  \end{align*}
  Since $v$ was chosen arbitrarily we get the following estimate for any
  unit vector $e \in H_1 \cap \Sphere$
  \begin{displaymath}
    |\pperp_{H_0}(e)| \le 7 C E^{1/\kappa} \frac{r_0^{1+\lambda/\kappa}}{r_1 (1-5\beta_1)} 
    \le 7 C E^{1/\kappa} \frac{4r_0^{1+\lambda/\kappa}}{3r_1} \,.
  \end{displaymath}
  Recall that $r_1 \ge \frac 12 r_0$, so we have $|\pperp_{H_0}(e)| \le
  \tfrac{8 \cdot 7}{3} C E^{1/\kappa} r_0^{\lambda/\kappa}$. Applying
  Proposition~\ref{prop:red-ang} we get
  \begin{displaymath}
    \dgras(H_0,H_1) \le \tilde{C}(m,l,p) C_{\red}(m) E^{1/\kappa} r_0^{\lambda/\kappa} \,. \qedhere
  \end{displaymath}
\end{proof}

\begin{lem}
  \label{lem:tan-conv}
  Choose a~point $x \in \Sigma$. For each $r \le \tilde{R}_1$ fix an $m$-plane
  $P(r) \in \BAP(x,r)$. There exists a~limit $\lim_{r \to 0} P(r) = T_x\Sigma
  \in G(n,m)$ and it does not depend on the choice of~$P(r) \in \BAP(x,r)$.
\end{lem}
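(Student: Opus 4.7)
The plan is to apply Lemma~\ref{lem:bap-osc} with $y = x$ (so $|x-y|=0$), which gives the oscillation estimate
\begin{equation*}
  \dgras(H_0, H_1) \le C_{hh} E^{1/\kappa} r_0^{\lambda/\kappa}
\end{equation*}
for every $H_0 \in \BAP(x, r_0)$, $H_1 \in \BAP(x, r_1)$, $r_0 \le \tilde R_1$, and $r_1 \in [r_0/2, r_0]$. This single estimate delivers both existence of the limit and independence of the choice of $P(r)$.

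First I would establish convergence along the geometric sequence $r_k = 2^{-k} \tilde R_1$. Since $r_{k+1} = r_k/2 \in [r_k/2, r_k]$, the oscillation estimate gives
\begin{equation*}
  \dgras(P(r_k), P(r_{k+1})) \le C_{hh} E^{1/\kappa} \tilde R_1^{\lambda/\kappa} 2^{-k \lambda/\kappa},
\end{equation*}
a summable geometric series in $k$ because $\lambda/\kappa > 0$. Hence $(P(r_k))_{k \in \N}$ is Cauchy in the compact (thus complete) metric space $(G(n,m), \dgras)$ and converges to some $L \in G(n,m)$. For arbitrary $r \le \tilde R_1$, pick $k$ with $r_{k+1} \le r \le r_k$; then $r \in [r_k/2, r_k]$, so the oscillation estimate again yields $\dgras(P(r), P(r_k)) \le C_{hh} E^{1/\kappa} r_k^{\lambda/\kappa} \to 0$, proving $\lim_{r \to 0} P(r) = L$. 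Independence of the choice of $P(r)$ follows by applying the oscillation estimate with $r_0 = r_1 = r$ to any two candidates $P, P' \in \BAP(x,r)$: $\dgras(P, P') \le C_{hh} E^{1/\kappa} r^{\lambda/\kappa} \to 0$.

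It remains to identify $L$ with the classical tangent plane $T_x\Sigma$, which exists because Theorem~\ref{thm:regularity} already makes $\Sigma$ a $C^{1,\lambda/\kappa}$-submanifold near $x$. Locally $\Sigma$ is the graph of a $C^1$ map over $x + T_x\Sigma$, so $r^{-1}\HD(\Sigma \cap \CBall(x,r), (x + T_x\Sigma) \cap \CBall(x,r)) \to 0$ as $r \to 0$; by Definition~\ref{def:beta-plane}, the same ratio with $T_x\Sigma$ replaced by $P(r)$ is bounded by $\ntheta(x,r) \to 0$. The triangle inequality in Hausdorff distance on $\CBall(x,r)$, combined with the standard comparison between $r^{-1}\HD$ of two $m$-planes in $\CBall(x,r)$ and the Grassmannian metric $\dgras$ (an orthonormal basis argument in the spirit of Lemma~\ref{lem:proj-ang}), forces $\dgras(P(r), T_x\Sigma) \to 0$. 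Uniqueness of limits then gives $L = T_x\Sigma$. The only slightly delicate step is this last identification; the bulk of the argument is the telescoping Cauchy estimate powered by Lemma~\ref{lem:bap-osc}.
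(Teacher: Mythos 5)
Your proposal is correct and follows essentially the same route as the paper: the key step in both is applying Lemma~\ref{lem:bap-osc} with $y = x$ along the dyadic radii $2^{-k}\tilde R_1$ to get a summable telescoping bound, then comparing an arbitrary $r$ to the nearest dyadic radius, which simultaneously yields convergence and independence of the choice of $P(r) \in \BAP(x,r)$. The only difference is that you spell out the identification of the limit with $T_x\Sigma$ (via $\ntheta(x,r)\to 0$ and the comparison between Hausdorff distance of planes in $\CBall(x,r)$ and $\dgras$), a step the paper simply asserts; your sketch of it is sound.
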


\begin{proof}
  Set $\rho_k = 2^{-k}\tilde{R}_1$ and for each $k$ choose $P_k \in
  \BAP(x,\rho_k)$. Set $\beta_k = \nbeta(x,\rho_k)$. We will show that $\{ P(r)
  \}_{r < \tilde{R}_1}$ satisfies the Cauchy condition. Fix some $0 < s < t <
  \rho_0$ and find two natural numbers $a < b$ such that $\rho_{b+1} < s \le
  \rho_b$ and $\rho_{a+1} < t \le \rho_a$.

  Applying Lemma~\ref{lem:bap-osc} with $x = y$, $r_0 = \rho_j$ and $r_1 =
  \frac 12 r_0 = \rho_{j+1}$ we obtain
  \begin{displaymath}
    \dgras(P_j,P_{j+1}) \le CE^{1/\kappa} \rho_j^{\lambda/\kappa} \,.
  \end{displaymath}
  Setting $r_0 = \rho_b$ and $r_1 = s$ or $r_0 = \rho_a$ and $r_1 = t$
  we also get
  \begin{displaymath}
    \dgras(P(s),P_b) \le CE^{1/\kappa} \rho_b^{\lambda/\kappa}
    \quad \text{and} \quad
    \dgras(P(t),P_a) \le CE^{1/\kappa} \rho_a^{\lambda/\kappa} \,.
  \end{displaymath}
  Using these estimates we can write
  \begin{multline*}
    \dgras(P(r),P(s)) 
    \le \dgras(P(r),P_a) + \sum_{j=a}^{b-1} \dgras(P_j,P_{j+1}) + \dgras(P_b,P(s)) \\
    \le CE^{1/\kappa} \left( \rho_a^{\lambda/\kappa} + \sum_{j=a}^b \rho_j^{\lambda/\kappa} \right) 
    = C E^{1/\kappa} \rho_a^{\lambda/\kappa} \left(1 + \sum_{j=0}^{b-a} 2^{-j \lambda/\kappa} \right) 
    = \hat{C}(m,l,p) E^{1/\kappa} \rho_a^{\lambda/\kappa} \,,
  \end{multline*}
  which shows that the Cauchy condition is satisfied, so $P(r)$ converges
  in~$G(n,m)$ to some $m$-plane, which must be the tangent plane $T_x\Sigma$.
\end{proof}

\begin{cor}
  \label{cor:tan-dist}
  There exists a~constant $C_{th} = C_{th}(m,l,p)$ such that for all $x \in
  \Sigma$, all $r \le \tilde{R}_1$ and all $H \in \BAP(x,r)$ we have
  \begin{displaymath}
    \dgras(T_x\Sigma, H) \le C_{th} E^{1/\kappa} r^{\lambda/\kappa}
  \end{displaymath}
\end{cor}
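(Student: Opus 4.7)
The plan is to mimic the Cauchy computation from the proof of Lemma~\ref{lem:tan-conv}, but now tracking the explicit quantitative decay rate rather than just verifying the Cauchy condition. Fix $x \in \Sigma$, a radius $r \le \tilde{R}_1$, and a plane $H \in \BAP(x,r)$. I would set $\rho_0 = r$ and $\rho_k = 2^{-k} r$ for $k \ge 1$, then choose $P_k \in \BAP(x,\rho_k)$ for each $k \ge 1$ and set $P_0 = H$.

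First I would invoke Lemma~\ref{lem:bap-osc} at each dyadic step with $y = x$, $r_0 = \rho_k$ and $r_1 = \rho_{k+1} = \tfrac{1}{2}\rho_k$. The hypothesis $y \in \Sigma \cap \CBall(x,\tfrac{1}{2}r_0)$ is trivial, the range constraint $r_1 \in [\tfrac{1}{2}r_0, r_0 - |x-y|]$ collapses to $r_1 = \tfrac{1}{2} r_0$ (which is admissible), and $\rho_k \le r \le \tilde{R}_1$ for all $k$. This gives
\[
\dgras(P_k, P_{k+1}) \le C_{hh} E^{1/\kappa} \rho_k^{\lambda/\kappa} = C_{hh} E^{1/\kappa} r^{\lambda/\kappa} 2^{-k\lambda/\kappa}.
\]

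Next I would use Lemma~\ref{lem:tan-conv} to conclude that $P_k \to T_x\Sigma$ in $G(n,m)$ as $k \to \infty$, independently of the particular representatives $P_k$ selected from $\BAP(x,\rho_k)$. Summing the dyadic estimates, applying the triangle inequality to the telescoping partial sums, and passing to the limit yields
\[
\dgras(H, T_x\Sigma) \le \sum_{k=0}^\infty \dgras(P_k, P_{k+1}) \le C_{hh} E^{1/\kappa} r^{\lambda/\kappa} \sum_{k=0}^\infty 2^{-k\lambda/\kappa}.
\]
Since $\lambda/\kappa > 0$ the geometric series converges, and setting $C_{th} = C_{hh}\bigl(1 - 2^{-\lambda/\kappa}\bigr)^{-1}$ finishes the argument.

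I do not foresee any real obstacle: every ingredient is already proved. The only mild subtlety is ensuring the bound is uniform over the (possibly nonunique) choice of $H \in \BAP(x,r)$, but this is automatic because the telescoping argument begins from an arbitrary such $H$ placed in the slot $P_0$ and uses only the defining property of $\BAP(x,r)$ through Lemma~\ref{lem:bap-osc}.
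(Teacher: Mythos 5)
Your proof is correct and is essentially the argument the paper intends: the corollary is the quantitative content of the Cauchy estimate in the proof of Lemma~\ref{lem:tan-conv}, obtained by seeding the dyadic telescoping sum with the arbitrary $H \in \BAP(x,r)$ at scale $\rho_0 = r$ and letting the small scale tend to zero. The dyadic sum of $C_{hh}E^{1/\kappa}\rho_k^{\lambda/\kappa}$ is a convergent geometric series dominated by its first term, which yields exactly the claimed bound with $C_{th} = C_{hh}(1-2^{-\lambda/\kappa})^{-1}$.
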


\begin{cor}
  \label{cor:tan-point}
  There exists a~constant $C_{tp} = C_{tp}(m,l,p)$ such that for all $x \in
  \Sigma$ and all $y \in \Sigma \cap \CBall(x,\tilde{R}_1)$ we have
  \begin{displaymath}
    \dist(y, x+T_x\Sigma) = |\pperp_x(y-x)| \le C_{tp} E^{1/\kappa} |y-x|^{1+\lambda/\kappa} \,.
  \end{displaymath}
\end{cor}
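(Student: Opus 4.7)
The plan is to use the triangle inequality together with the two bounds already established, namely the $\beta$-number estimate in \eqref{est:beta-indep} and the control of the distance between the tangent plane $T_x\Sigma$ and a best approximating plane at scale $r$ provided by Corollary~\ref{cor:tan-dist}. No new geometric construction is needed; the argument is purely a comparison of two planes.

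Fix $x \in \Sigma$ and $y \in \Sigma \cap \CBall(x,\tilde{R}_1)$. If $y = x$ the claim is trivial, so assume $r := |y-x| \in (0,\tilde{R}_1]$. Choose any $H \in \BAP(x,r)$. Since $y \in \Sigma \cap \CBall(x,r)$, the definition of $\nbeta$ together with~\eqref{est:beta-indep} gives
\begin{displaymath}
  |\pperp_H(y-x)| \le \nbeta(x,r)\, r \le C(m,l,p)\, E^{1/\kappa} r^{1+\lambda/\kappa} \,.
\end{displaymath}
On the other hand, Corollary~\ref{cor:tan-dist} provides
\begin{displaymath}
  \dgras(T_x\Sigma, H) = \|\pperp_x - \pperp_H\| \le C_{th} E^{1/\kappa} r^{\lambda/\kappa}\,.
\end{displaymath}

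Combining the two bounds via the triangle inequality and the definition of the operator norm yields
\begin{displaymath}
  |\pperp_x(y-x)| \le |(\pperp_x - \pperp_H)(y-x)| + |\pperp_H(y-x)| \le \dgras(T_x\Sigma,H)\,|y-x| + \nbeta(x,r)\, r \,,
\end{displaymath}
which is bounded above by $(C_{th} + C(m,l,p))\, E^{1/\kappa} r^{1+\lambda/\kappa}$. Setting $C_{tp} := C_{th} + C(m,l,p)$ yields the desired estimate. There is no genuine obstacle here; the only thing to keep track of is that we apply the $\BAP$ bound and Corollary~\ref{cor:tan-dist} at the same scale $r = |y - x|$, which is allowed precisely because $|y - x| \le \tilde{R}_1$.
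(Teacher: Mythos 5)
Your proposal is correct and follows essentially the same route as the paper: pick $H\in\BAP(x,|y-x|)$, bound $|\pperp_H(y-x)|$ via \eqref{est:beta-indep} and the remaining term via $\dgras(T_x\Sigma,H)$ from Corollary~\ref{cor:tan-dist}, all at the single scale $r=|y-x|$. The only cosmetic caveat (shared with the paper's own write-up) is that a plane in $\BAP(x,r)$ controls distances via $\ntheta(x,r)\le 5\nbeta(x,r)$ rather than $\nbeta(x,r)$ directly, which only changes the constant.
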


\begin{proof}
  Choose an $m$-plane $H \in \BAP(x,|y-x|)$. Using~\eqref{est:beta-indep} and
  Corollary~\ref{cor:tan-dist} we get
  \begin{align*}
    |\pperp_x(y-x)| 
    &\le |\pperp_H(y-x)| + |\pperp_x (\proj_H(y-x))| \\
    &\le |y-x| \nbeta(x,|y-x|) + |y-x| C_{th} E^{1/\kappa} |y-x|^{\lambda/\kappa} \\
    &\le C_{tp} E^{1/\kappa} |y-x|^{1+\lambda/\kappa} \,. \qedhere
  \end{align*}
\end{proof}

\begin{lem}
  \label{lem:tan-osc}
  There exists a~constant $C_{tt} = C_{tt}(m,l,p)$ such that for all $x
  \in \Sigma$ and for all $y \in \Sigma \cap \CBall(x,\tfrac 12 \tilde{R}_1)$ we
  have
  \begin{displaymath}
    \dgras(T_x\Sigma, T_y\Sigma) \le C_{tt} E^{1/\kappa} |x-y|^{\lambda/\kappa} \,.
  \end{displaymath}
\end{lem}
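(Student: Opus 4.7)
The plan is to reduce this to a triangle inequality in the Grassmannian using the already established Lemma~\ref{lem:bap-osc} and Corollary~\ref{cor:tan-dist}. Fix $x \in \Sigma$ and $y \in \Sigma \cap \CBall(x, \tfrac12 \tilde{R}_1)$. Set $r_0 = 2|x-y|$ and $r_1 = |x-y| = \tfrac12 r_0$. Then $r_0 \le \tilde{R}_1$ (so both Corollary~\ref{cor:tan-dist} and Lemma~\ref{lem:bap-osc} are applicable), the point $y$ lies in $\Sigma \cap \CBall(x, \tfrac12 r_0)$, and $r_1 = \tfrac12 r_0 = r_0 - |x-y|$ lies in the admissible interval $[\tfrac12 r_0, r_0 - |x-y|]$ required by Lemma~\ref{lem:bap-osc}.

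Now pick any $H_0 \in \BAP(x, r_0)$ and any $H_1 \in \BAP(y, r_1)$. By Corollary~\ref{cor:tan-dist}, applied first at $(x, r_0)$ and then at $(y, r_1)$,
\begin{displaymath}
  \dgras(T_x\Sigma, H_0) \le C_{th} E^{1/\kappa} r_0^{\lambda/\kappa}
  \quad\text{and}\quad
  \dgras(T_y\Sigma, H_1) \le C_{th} E^{1/\kappa} r_1^{\lambda/\kappa}.
\end{displaymath}
Lemma~\ref{lem:bap-osc} provides the middle term $\dgras(H_0, H_1) \le C_{hh} E^{1/\kappa} r_0^{\lambda/\kappa}$. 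Since $\dgras$ is a genuine metric on $G(n,m)$, the triangle inequality yields
\begin{displaymath}
  \dgras(T_x\Sigma, T_y\Sigma)
  \le \dgras(T_x\Sigma, H_0) + \dgras(H_0, H_1) + \dgras(H_1, T_y\Sigma)
  \le (2 C_{th} + C_{hh}) E^{1/\kappa} r_0^{\lambda/\kappa},
\end{displaymath}
where we used $r_1 \le r_0$. Substituting $r_0 = 2|x-y|$ and setting $C_{tt} = 2^{\lambda/\kappa}(2 C_{th} + C_{hh})$ gives the claimed bound.

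There is no substantive obstacle here; the only things to double-check are the admissibility of the chosen radii in the hypotheses of Lemma~\ref{lem:bap-osc} (verified above) and the fact that $\dgras$ satisfies the triangle inequality, which is immediate from its definition as an operator norm on projections. Everything else is a direct assembly of the already proven facts.
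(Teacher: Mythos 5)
Your proof is correct and is essentially identical to the paper's own argument: the same choice $r_0 = 2|x-y|$, $r_1 = |x-y|$, the same appeal to Lemma~\ref{lem:bap-osc} and Corollary~\ref{cor:tan-dist}, and the same triangle inequality in $(G(n,m),\dgras)$. Your explicit check that $r_1$ lies in the (degenerate, single-point) interval $[\tfrac12 r_0,\, r_0-|x-y|]$ is a welcome detail the paper leaves implicit.
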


\begin{proof}
  Let $y \in \Sigma \cap \CBall(x,\frac 12 \tilde{R}_1)$. Set $r_0 = 2 |x-y|$ and
  $r_1 = |x-y|$. Choose any $H_0 \in \BAP(x,r_0)$ and any $H_1 \in
  \BAP(y,r_1)$. From Lemma~\ref{lem:bap-osc} we have
  \begin{displaymath}
    \dgras(H_0,H_1) \le C E^{1/\kappa} r_0^{\lambda/\kappa} \,.
  \end{displaymath}
  On the other hand Corollary~\ref{cor:tan-dist} says that
  \begin{displaymath}
    \dgras(T_x\Sigma, H_0) \le C_{th} E^{1/\kappa} r_0^{\lambda/\kappa}
    \qquad \text{and} \qquad
    \dgras(T_y\Sigma, H_1) \le C_{th} E^{1/\kappa} r_0^{\lambda/\kappa} \,.
  \end{displaymath}
  Putting these estimates together we obtain
  \begin{displaymath}
    \dgras(T_x\Sigma, T_y\Sigma) 
    \le \dgras(T_x\Sigma,H_0) + \dgras(H_0,H_1) + \dgras(H_1,T_y\Sigma)
    = \bar{C} E^{1/\kappa} |x-y|^{\lambda/\kappa} \,. \qedhere
  \end{displaymath}
\end{proof}

\subsection{Uniform estimates on the size of maps}

Combining Corollary~\ref{cor:tan-point} and Lemma~\ref{lem:tan-osc} one can see
that if we have two distinct points $y,z \in \Sigma$ such that $y-z \perp
T_x\Sigma$ and $|y-z| \lesssim |x-y|$ then the tangent plane $T_y\Sigma$ must
form a~large angle with the plane $T_x\Sigma$. Such situation can only happen
far away from $x$ because of~the bound on the oscillation of~tangent planes.

\begin{rem}
  \label{rem:R-ang-bound}
  Let $\iota = \iota(m) = \frac{\ere}{100}$. Lemma~\ref{lem:tan-osc} allows us
  to find a~radius $\tilde{R}_2 = C(m,l,p) E^{-1/\lambda} \in (0, \tilde{R}_1]$
  such that whenever $|x-y| \le \tilde{R}_2$ for some $x,y \in \Sigma$, then
  $\dgras(T_x\Sigma, T_y\Sigma) \le \iota$.
\end{rem}

\begin{lem}
  \label{lem:1point-rad}
  Choose any point $x \in \Sigma$. There exists a~radius $R_2 = C(m,l,p)
  E^{-1/\lambda} \in (0,\tilde{R}_2]$ such that if $y,z \in \Sigma \cap
  \CBall(x,\frac 12 R_2)$ and $(y-z) \perp T_x\Sigma$, then necessarily
  $\max\{|x-y|,|x-z|\} > R_2$.
\end{lem}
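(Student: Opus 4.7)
The plan is to argue by contradiction, reading the conclusion as the assertion that no two distinct points $y \ne z$ in $\Sigma \cap \CBall(x, \tfrac{1}{2} R_2)$ can satisfy $(y-z) \perp T_x\Sigma$. Suppose such $y, z$ exist. By symmetry I may assume $|x-y| \ge |x-z|$, so $|y-z| \le 2|x-y| \le R_2$. I will choose $R_2 \le \tilde{R}_2$ at the end, so that Corollary~\ref{cor:tan-point} and Lemma~\ref{lem:tan-osc} are available on the relevant scales.

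The idea is then to bound $|\pperp_y(y-z)|$ both above and below. Corollary~\ref{cor:tan-point} with base point $y$ and $z \in \Sigma \cap \CBall(y, \tilde{R}_1)$ yields the upper bound
$|\pperp_y(y-z)| \le C_{tp} E^{1/\kappa} |y-z|^{1+\lambda/\kappa}$.
For the lower bound, the hypothesis $(y-z) \in (T_x\Sigma)^\perp$ gives $\pperp_x(y-z) = y-z$, so Lemma~\ref{lem:tan-osc} together with the identity $\|\pperp_x - \pperp_y\| = \dgras(T_x\Sigma, T_y\Sigma)$ and the reverse triangle inequality yields
$|\pperp_y(y-z)| \ge \bigl(1 - C_{tt} E^{1/\kappa} |x-y|^{\lambda/\kappa}\bigr) |y-z|$.

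Combining these two bounds and dividing through by $|y-z| > 0$ gives
$1 \le C_{tt} E^{1/\kappa} |x-y|^{\lambda/\kappa} + C_{tp} E^{1/\kappa} |y-z|^{\lambda/\kappa} \le (C_{tt}+C_{tp}) E^{1/\kappa} R_2^{\lambda/\kappa}$,
which fails as soon as $R_2 < \bigl(2(C_{tt}+C_{tp})\bigr)^{-\kappa/\lambda} E^{-1/\lambda}$. Taking $R_2 = \min\bigl\{\tilde{R}_2,\, \bigl(2(C_{tt}+C_{tp})\bigr)^{-\kappa/\lambda} E^{-1/\lambda}\bigr\}$ delivers the contradiction and a constant of the claimed form $C(m,l,p) E^{-1/\lambda}$. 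The only real obstacle is bookkeeping: one must verify that this $R_2$ sits below $\tilde{R}_2 \le \tilde{R}_1$, so that the oscillation estimate of Lemma~\ref{lem:tan-osc} and the tangent-approximation estimate of Corollary~\ref{cor:tan-point} are legitimately applied at the relevant pairs of points.
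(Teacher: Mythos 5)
Your argument is correct and is essentially the paper's own proof: both hinge on playing the upper bound $|\pperp_y(y-z)| \le C_{tp}E^{1/\kappa}|y-z|^{1+\lambda/\kappa}$ from Corollary~\ref{cor:tan-point} against the lower bound coming from $\pperp_x(y-z)=y-z$ and the tangent-plane oscillation estimate of Lemma~\ref{lem:tan-osc}, and then choosing $R_2 \sim E^{-1/\lambda}$ small enough to force a contradiction. The only (harmless) deviation is that you control $|y-z|$ by the triangle inequality $|y-z|\le 2|x-y|\le R_2\le \tilde R_1$ rather than by a preliminary application of Corollary~\ref{cor:tan-point} at $x$ as the paper does; both routes legitimize the use of the cited estimates at the pair $(y,z)$.
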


\begin{proof}
  Let $C_{tp}$ be the constant from Corollary~\ref{cor:tan-point}. Choose two
  points $y,z \in \Sigma$ such that $(z-y) \perp T_x\Sigma$ and
  $\max\{|x-y|,|x-z|\} \le \frac 12 \tilde{R}_1 (C_0 C_{tp})^{-1}$. Without loss
  of~generality we can assume that $|x-z| \le |x-y| \le 1$, hence
  \begin{displaymath}
    |x-z|^{1+\lambda/\kappa} 
    \le |x-y|^{1+\lambda/\kappa} 
    \le \tilde{R}_1^{\lambda/\kappa}|x-y| 
    \le C_0 E^{-1/\kappa} |x-y| \,.
  \end{displaymath}
  First we estimate the distance $|y-z|$ using Corollary~\ref{cor:tan-point}.
  \begin{align}
    \label{est:yz}
    |y-z| &= |\pperp_x(y-z)|
    \le |\pperp_x(y-x)| + |\pperp_x(x-z)| \\
    &\le C_{tp} E^{1/\kappa} ( |y-x|^{1 + \lambda/\kappa} + |x-z|^{1 + \lambda/\kappa})
    \le 2 C_{tp} C_0 |x-y| \le \tilde{R}_1 \notag \,.
  \end{align}
  Hence we can use Corollary~\ref{cor:tan-point} once again to estimate the
  distance between $z$ and~$T_y\Sigma$. Using the definition of~$\dgras$ we may
  write
  \begin{align}
    \label{est:TxTy-lower}
    \dgras(T_x\Sigma, T_y\Sigma)
    &\ge |z-y|^{-1} |\proj_x(z-y) - \proj_y (z-y)|
    = |z-y|^{-1} |\proj_y (z-y)| \\
    &\ge |z-y|^{-1} \left( |z-y| - |\pperp_y(z-y)| \right) \notag \\ 
    &\ge |z-y|^{-1} \left( |z-y| - C_{tp} E^{1/\kappa} |z-y|^{1+\lambda/\kappa} \right) \notag \\
    &= 1 - C_{tp} E^{1/\kappa} |z-y|^{\lambda/\kappa} \notag \,.
  \end{align}
  On the other hand Lemma~\ref{lem:tan-osc} gives us
  \begin{equation}
    \label{est:TxTy-upper}
    \dgras(T_x\Sigma, T_y\Sigma) \le C_{tt} E^{1/\kappa} |x-y|^{\lambda/\kappa} \,.
  \end{equation}
  Putting these two estimates together we have
  \begin{align*}
    && 1 - C_{tp} E^{1/\kappa} |z-y|^{\lambda/\kappa} 
    &\le \dgras(T_x\Sigma, T_y\Sigma) 
    \le C_{tt} E^{1/\kappa} |x-y|^{\lambda/\kappa} \,, \\
    &\text{so by~\eqref{est:yz}}&
    1 - C_{tp} E^{1/\kappa} (2C_{tp} C_0 |x-y|^{\lambda/\kappa})
    &\le C_{tt}  E^{1/\kappa} |x-y|^{\lambda/\kappa} \,, \\
    &\text{hence}&
    |x-y| &\ge \hat{C}(m,l,p) E^{-1/\lambda} \,.
  \end{align*}
  We set $R_2 = \frac 12 \min\{ \tilde{R}_2 (C_0 C_{tp})^{-1},
  \hat{C}(m,l,p) E^{-1/\lambda} \}$.
\end{proof}

\begin{cor}
  \label{cor:graph}
  For each $x \in \Sigma$ and each $y \in \Sigma \cap \CBall(x,R_2)$ the point
  $y$ is the only point in~the intersection $\Sigma \cap (y + T_x\Sigma^{\perp})
  \cap \CBall(x,R_2)$. Therefore $(\Sigma - x) \cap \CBall_R$ is a~graph of~the
  function
  \begin{align}
    \label{def:Fx}
    F_x : \Dom(x)  &\to T_x\Sigma^{\perp} \cap \CBall_{R_2}
    \quad \text{defined by}\\
    F_x(w) + w &= (\Sigma - x) \cap (w+T_x\Sigma^{\perp}) \cap \CBall_{R_2} \notag \,,
  \end{align}
  where $\Dom(x) = \pi_x(\Sigma \cap \CBall_{R_2}) \subseteq T_x\Sigma$. By
  Theorem~\ref{thm:regularity} the function $F_x$ is of class
  $C^{1,\lambda/\kappa}$.
\end{cor}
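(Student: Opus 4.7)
The proof will combine Lemma~\ref{lem:1point-rad} (giving injectivity of $\proj_x$ on the relevant piece of $\Sigma$) with Theorem~\ref{thm:regularity} (supplying the Hölder regularity of $F_x$). The uniqueness claim is the essential content; the existence of $F_x$ and its smoothness are then bookkeeping.

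Fix $x \in \Sigma$. I would first argue that $\proj_x$ is injective on $(\Sigma - x) \cap \CBall_{R_2}$. Suppose two points $y, z \in \Sigma \cap \CBall(x, R_2)$ satisfy $\proj_x(y-x) = \proj_x(z-x)$, which is equivalent to $(y-z) \perp T_x\Sigma$. Then both $y$ and $z$ lie in $\CBall(x, R_2)$, and (after replacing $R_2$ by the smaller of the radius defined by~\eqref{def:R1}-type bookkeeping and half of the radius appearing in Lemma~\ref{lem:1point-rad}, so that the hypothesis of that lemma is met) Lemma~\ref{lem:1point-rad} forces $\max\{|x-y|, |x-z|\} > R_2$, contradicting $y, z \in \CBall(x, R_2)$. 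Hence $y = z$ and each fiber $\Sigma \cap (y + T_x\Sigma^{\perp}) \cap \CBall(x, R_2)$ is a singleton.

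This injectivity lets me set $\Dom(x) = \proj_x((\Sigma - x) \cap \CBall_{R_2})$ and define $F_x(w) \in T_x\Sigma^{\perp}$ as the unique vector such that $w + F_x(w) \in (\Sigma - x) \cap \CBall_{R_2}$. By construction, $(\Sigma - x) \cap \CBall_{R_2} = \graph(F_x)$ as claimed. For the regularity assertion I would invoke Theorem~\ref{thm:regularity}: the hypotheses there are already verified (an $m$-fine set is compact, satisfies~\eqref{def:ahlfors} and~\eqref{def:theta-beta}, and has $\E_p^l(\Sigma) < \infty$), so $\Sigma$ is an embedded $C^{1,\lambda/\kappa}$-manifold. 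The restriction $\proj_x \vert_{\Sigma - x}$ is then a $C^{1,\lambda/\kappa}$ map whose differential at $0$ is the identity on $T_x\Sigma$; shrinking $R_2$ once more so that this differential stays invertible on the relevant patch (possible because $R_2 = R_2(E,m,l,p)$), the inverse function theorem yields a $C^{1,\lambda/\kappa}$ local inverse, and $F_x$ equals the composition of this inverse with $\pperp_x$, hence $F_x \in C^{1,\lambda/\kappa}$.

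The main (and only) obstacle is ensuring that the same $R_2$ simultaneously fits the hypothesis of Lemma~\ref{lem:1point-rad}, the injectivity window of $\proj_x\vert_\Sigma$, and the target ball $\CBall_{R_2}$ in the definition of $F_x$; this is purely a constant-chasing issue and is resolved by redefining $R_2$ as the minimum of the finitely many radii produced above, all of which are of the form $C(m,l,p) E^{-1/\lambda}$.
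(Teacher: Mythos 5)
Your proposal is correct and follows exactly the route the paper intends (the corollary is stated without a separate proof): uniqueness of the fiber point comes from Lemma~\ref{lem:1point-rad} via the contradiction $\max\{|x-y|,|x-z|\}>R_2$ for two points of $\CBall(x,R_2)$, and the $C^{1,\lambda/\kappa}$ regularity of $F_x$ comes from Theorem~\ref{thm:regularity} together with the inverse function theorem applied to $\proj_x|_\Sigma$, whose uniform invertibility on the patch is already guaranteed by the tangent-plane oscillation bound of Lemma~\ref{lem:tan-osc} (so in fact no further shrinking of $R_2$ is needed). The factor-of-two bookkeeping you flag is real but harmless, since the proof of Lemma~\ref{lem:1point-rad} actually yields the conclusion for points of $\CBall(x,R_2)$ with the $R_2$ defined there.
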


Fix a point $o \in \Sigma$. We define the parameterization
\begin{equation}
  \label{def:phi}
  \varphi : \Dom(o) \to \Sigma \cap \CBall(o,R_2) 
  \quad \text{by} \quad
  \varphi(x) = o + F_o(x) + x \,.
\end{equation}
Recall our convention, that when we write $T_o\Sigma$ we always mean the
appropriate subspace of~$\R^n$. For $x \in \Dom(o)$ we set
\begin{displaymath}
  L_x = \left( \proj_o|_{T_{\varphi(x)}\Sigma} \right)^{-1} : T_o\Sigma \to T_{\varphi(x)}\Sigma 
  \quad \text{and} \quad
  K_x = \left( \pperp_o|_{T_{\varphi(x)}\Sigma^{\perp}} \right)^{-1} : T_o\Sigma^{\perp} \to T_{\varphi(x)}\Sigma^{\perp} \,.
\end{displaymath}
Observe that these mappings are well defined since $R_2$ is not greater than
$\tilde{R}_2$ defined in Corollary~\ref{rem:R-ang-bound}, which ensures that
$\dgras(T_o\Sigma, T_{\varphi(x)}\Sigma) \le \iota$. Observe that for any unit
vector $v \in T_{\varphi(x)}\Sigma$ we have $|Q_o v| = | \pi_o v -
\pi_{\varphi(x)}v| \le \iota$, hence $|\pi_o v| = |v - Q_o v| \ge 1 - \iota$.
This shows that the norms $\|L_x\|_{T_o\Sigma}$ and
$\|K_x\|_{T_o\Sigma^{\perp}}$ are less or~equal to~$(1 - \iota)^{-1}$.

\begin{rem}
  \label{rem:deriv}
  Recall that $\iota < \frac 12$. For $x \in \Dom(o)$ and $h \in T_o\Sigma$ we
  have (cf.~\cite[Lemma~3.15]{slawek-phd})
  \begin{align*}
    DF_o(x) h = L_x h - h = Q_o(L_x h)
    \quad &\text{and} \quad
    D\varphi(x) h = L_x h \,, \\
    \text{hence} \quad
    \| DF_o(x) \| \le \frac{\iota}{1 - \iota} < 1
    \quad &\text{and} \quad 
    \| D\varphi(x) \| \le \frac{1}{1 - \iota} < 2 \,.
  \end{align*}
\end{rem}

\begin{rem}
  \label{rem:domain}
  For all $x \in \Dom(o)$ we have $\| D \varphi(x) \| < 2$ and in consequence
  $|\varphi(x) - \varphi(o)| < 2 |x-o|$. Hence $T_o\Sigma \cap \CBall_{\frac
    12 R_2} \subseteq \Dom(o)$.
\end{rem}

\begin{lem}
  \label{lem:tp-deriv}
  Let $C_{\red}$ be the constant from Proposition~\ref{prop:red-ang}. For any
  $x,y \in \Dom(o)$ we have
  \begin{align*}
    \| D\varphi(x) - D\varphi(y) \|
    &\le 4 \dgras(T_{\varphi(x)}\Sigma, T_{\varphi(y)}\Sigma) \\
    \text{and} \quad
    \dgras(T_{\varphi(x)}\Sigma, T_{\varphi(y)}\Sigma)
    &\le C_{\red} \| D\varphi(x) - D\varphi(y) \| \,.
  \end{align*}
\end{lem}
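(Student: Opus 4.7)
The plan is to use Remark~\ref{rem:deriv}, which identifies $D\varphi(x)h = L_x h$, so that both inequalities become statements about the auxiliary operators $L_x, L_y : T_o\Sigma \to \R^n$. By construction $L_x h \in T_{\varphi(x)}\Sigma$ and $L_y h \in T_{\varphi(y)}\Sigma$, and the norms of $L_x$ and $L_y$ are bounded by $(1-\iota)^{-1}$. Since $\iota$ is very small (we have $\iota = \ere/100 \le 1/100$), all the resulting numerical constants will be harmless.

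For the first, upper-bound inequality, I fix a unit $h \in T_o\Sigma$ and decompose
\[
L_x h - L_y h = \proj_{\varphi(x)}(L_x h - L_y h) + \pperp_{\varphi(x)}(L_x h - L_y h).
\]
The second summand equals $-\pperp_{\varphi(x)} L_y h = (\pperp_{\varphi(y)} - \pperp_{\varphi(x)}) L_y h$ because $L_x h \in T_{\varphi(x)}\Sigma$ and $L_y h \in T_{\varphi(y)}\Sigma$, so its norm is at most $\dgras(T_{\varphi(x)}\Sigma,T_{\varphi(y)}\Sigma) \cdot \|L_y\|$. The first summand lies in $T_{\varphi(x)}\Sigma$; applying $\proj_o$ and using the identities $\proj_o L_x = \proj_o L_y = \opid_{T_o\Sigma}$ I obtain $\proj_o \bigl(\proj_{\varphi(x)}(L_x h - L_y h)\bigr) = \proj_o \pperp_{\varphi(x)} L_y h$, of norm at most $\dgras \cdot \|L_y\|$. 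Since $L_x$ inverts $\proj_o|_{T_{\varphi(x)}\Sigma}$ with norm at most $(1-\iota)^{-1}$, the first summand itself has norm at most $\|L_x\|\cdot\|L_y\|\cdot\dgras$. Summing gives $|L_x h - L_y h| \le \bigl[(1-\iota)^{-2} + (1-\iota)^{-1}\bigr]\dgras \le 4\dgras$.

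For the second, lower-bound inequality I appeal to Proposition~\ref{prop:red-ang}. Pick an orthonormal basis $(e_1,\ldots,e_m)$ of $T_o\Sigma$ and set $v_i = L_x e_i$, $u_i = L_y e_i$, which are bases of $T_{\varphi(x)}\Sigma$ and $T_{\varphi(y)}\Sigma$ respectively. By Remark~\ref{rem:deriv} we have $v_i = e_i + DF_o(x)e_i$ with $DF_o(x)e_i \in T_o\Sigma^\perp$, so
\[
\langle v_i, v_j\rangle = \delta_i^j + \langle DF_o(x)e_i, DF_o(x)e_j\rangle,
\]
and because $\|DF_o(x)\| \le \iota/(1-\iota)$ we obtain $|\langle v_i, v_j\rangle - \delta_i^j| \le \iota^2/(1-\iota)^2 \le \ere$ thanks to our small choice of $\iota$. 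Thus $(v_1,\ldots,v_m)$ is a $\red$-basis of $T_{\varphi(x)}\Sigma$ with $\rho_0 = 1$ and $\varepsilon_0 \le \ere$. Moreover $|u_i - v_i| = |(D\varphi(x) - D\varphi(y))e_i| \le \vartheta := \|D\varphi(x) - D\varphi(y)\|$, so Proposition~\ref{prop:red-ang} yields $\dgras(T_{\varphi(x)}\Sigma, T_{\varphi(y)}\Sigma) \le C_{\red} \vartheta$, which is the desired bound.

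The plan has no serious obstacle; essentially all the work is in verifying the constants. One must check $\varepsilon_0 \le \ere$ for the $\red$-basis condition (which follows since $\iota = \ere/100$ together with $\iota \le 1/2$ yields $\iota^2/(1-\iota)^2 \le 4\iota^2 = \ere^2/2500 \le \ere$) and that the upper-bound coefficient $(1-\iota)^{-2} + (1-\iota)^{-1}$ is below $4$ (which holds comfortably since $\iota \le 1/100$).
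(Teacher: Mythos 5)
Your proof is correct and follows essentially the same route as the paper: the first inequality is the same two-line linear-algebra computation exploiting $\proj_o(L_xh-L_yh)=0$ and the norm bounds on $L_x$, $K_x$ (you split off the $T_{\varphi(x)}\Sigma$-component and invert $\proj_o|_{T_{\varphi(x)}\Sigma}$, the paper inverts the normal projection instead, but it is the same idea), and the second inequality is exactly the paper's application of Proposition~\ref{prop:red-ang} to the bases $D\varphi(x)e_i$, $D\varphi(y)e_i$. Your constant-checking (the $\red$-basis tolerance $\iota^2/(1-\iota)^2\le\ere$ and the coefficient $(1-\iota)^{-2}+(1-\iota)^{-1}\le 4$) is sound.
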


\begin{proof}
  We want to estimate
  \begin{displaymath}
    \| D\varphi(x) - D\varphi(y) \| = \| DF_o(x) - DF_o(y) \| = \| L_x - L_y \| \,.
  \end{displaymath}
  Let $h \in \Sphere$ and set $u = L_x(h)$ and $v = L_y(h)$. Note that $u-v
  \in T_o\Sigma^{\perp}$ so we can write
  \begin{align*}
    |L_x(h) - L_y(h)| 
    &= |u-v| 
    = |K_x(\pperp_x(u-v))|
    \le 2 |\pperp_x(u-v)|
    = 2|\pperp_x(v)| \\
    &\le 2 |v| \dgras(T_{\varphi(x)}\Sigma,T_{\varphi(y)}\Sigma) 
    \le 4 \dgras(T_{\varphi(x)}\Sigma,T_{\varphi(y)}\Sigma) \,.
  \end{align*}

  To prove the second part of Lemma~\ref{lem:tp-deriv} we will use
  Proposition~\ref{prop:red-ang}. Let $(e_1,\ldots,e_m)$ be some orthonormal
  basis of~$T_o\Sigma$. For each $i = 1,\ldots,m$ set $u_i = D\varphi(x)e_i$ and
  $v_i = D\varphi(y)e_i$. Then $(u_1,\ldots,u_m)$ is a~basis
  of~$T_{\varphi(x)}\Sigma$ and $(v_1,\ldots,v_m)$ is a~basis
  of~$T_{\varphi(y)}\Sigma$. By Remark~\ref{rem:deriv} for $i,j = 1,\ldots,m$
  and $i \ne j$ we have
  \begin{displaymath}
    1 \le |u_i| \le \frac{1}{1 - \iota} < 2 \iota 
    \quad \text{and} \quad
    |\langle u_i, u_j \rangle|
    = |\langle DF_o(x)e_i + e_i, DF_o(x)e_j + e_j \rangle| 
    < 3 \iota \,.
  \end{displaymath}
  These estimates show that $(u_1,\ldots,u_m)$ is a~$\red$-basis
  of~$T_{\varphi(x)}\Sigma$ with $\rho = 1$ and $\varepsilon = 3\iota$.
  Moreover
  \begin{displaymath}
    |u_i - v_i| = |D\varphi(x)e_i - D\varphi(y)e_i| 
    \le \| D\varphi(x) - D\varphi(y) \| \,.
  \end{displaymath}
  Since $3\iota = \frac{3\ere}{100} \le \ere$ we can use
  Proposition~\ref{prop:red-ang} to obtain
  \begin{displaymath}
    \dgras(T_{\varphi(x)}\Sigma, T_{\varphi(y)}\Sigma)
    \le C_{\red} \| D\varphi(x) - D\varphi(y) \| \,.
    \qedhere
  \end{displaymath}
\end{proof}

\begin{proof}[Proof of~Theorem~\ref{thm:C1tau}]
  Combining Lemma~\ref{lem:tan-osc} with Lemma~\ref{lem:tp-deriv} we get
  \begin{displaymath}
    \| DF_o(x) - DF_o(y) \| 
    = \| D\varphi(x) - D\varphi(y) \| 
    \le 4 C_{tt} E^{1/\kappa} |x-y|^{\lambda/\kappa}
  \end{displaymath}
  for all $x,y \in \Dom(o) = \pi_o(\Sigma \cap \CBall_{R_2}) \subseteq
  T_o\Sigma$. Since $\pi_o$ is continuous and $\Sigma \cap \CBall_{R_2}$ is
  compact, the function $F_o : \Dom(o) \to T_o\Sigma^{\perp}$ can be extended to
  a~function $F_o : T_o\Sigma \to T_o\Sigma^{\perp}$ without increasing its
  H{\"o}lder norm and in such a way that $\{ (y,F_o(y)) : y \in T_o\Sigma
  \setminus \Dom(o) \} \cap \CBall_{R_2} = \emptyset$. Hence we may set
  \begin{displaymath}
    R_{\laka} = R_2 = C(m,l,p)E^{-\frac{1}{\lambda}}
    \quad \text{and} \quad
    C_{\laka} = 4 C_{tt} E^{\frac {1}{\kappa}} \,. \qedhere
  \end{displaymath}
\end{proof}



\section{Optimal H\"older regularity}
\label{sec:improved-holder}

In~the previous paragraph we showed that $\Sigma$ is a~closed manifold of~class
$C^{1,\lambda/\kappa}$ but $\lambda/\kappa$ was not an optimal exponent. Now we
shall prove that for any $o \in \Sigma$ the map $F_o$ is of~class
$C^{1,\alpha}$, where $\alpha = 1 - \tfrac{ml}{p}$. For this purpose we employ
a~technique developed by Strzelecki, Szuma{\'n}ska and von~der~Mosel
in~\cite{MR2668877}.

The key to the proof of Theorem~\ref{thm:optimal-reg} is
Lemma~\ref{lem:bootstrap}. It says that the oscillation of~$D\varphi$ on~a~ball
of~radius~$r$ can be bounded above by the oscillation of~$D\varphi$ on a ball
of~radius $r/N$, where $N$ is some big number, plus a term of order
$r^{\alpha}$. If we choose $N$ big enough, then, upon iteration, the first term
disappears and the sum of the second terms is still of order $r^{\alpha}$.

To prove Lemma~\ref{lem:bootstrap} we choose two points $x,y \in \Dom(o)$ and we
set $r = |x-y|$. From Lemma~\ref{lem:tp-deriv} we know that the oscillation
of~$D\varphi$ is comparable with the oscillation of $T_{\varphi(\cdot)}\Sigma$.
We choose points $x_0,\ldots,x_m$ and $y_0,\ldots,y_m$ near $x$ and $y$
respectively, such that $\{x_i - x_0\}_{i=1}^m$ and $\{y_i - y_0\}_{i=1}^m$ form
a~roughly (up to an error of order $\frac 1k$, where $k$ is some big number)
orthogonal bases of $T_o\Sigma$. Moreover $|x_i - x_0| \approx r/N$ and $|y_i -
y_0| \approx r/N$. In the scale we are working in, we always have $\|D\varphi\|
\le 1 + \iota$, so $\{\varphi(x_i) - \varphi(x_0)\}_{i=1}^m$ and $\{\varphi(y_i)
- \varphi(y_0)\}_{i=1}^m$ are also roughly (up to an error of order $\frac 1k +
\iota$) orthogonal and span some $m$-dimensional secant spaces $X$ and $Y$
respectively. If we choose the points $y_0,\ldots,y_m$ appropriately, then the
``angle'' $\dgras(X,Y)$ can be estimated by~$r^{\alpha}$. The error we make when
we pass from $\dgras(T_{\varphi(x}, T_{\varphi(y)})$ to $\dgras(X,Y)$ is
comparable with the oscillation of~$D\varphi$ on balls of radius~$r/N$.

To choose ``good'' points $y_0,\ldots,y_m$ we first define the set of ``bad
parameters'' $\bad(x_0,\ldots,x_{l-2})$, i.e. such $z \in \Dom(o)$ that the
integrand
\begin{displaymath}
  \Kphi(x_0,\ldots,x_{l-2},z) 
  = \sup_{p_l,\ldots,p_{m+1} \in \Sigma} \K(\varphi(x_0),\ldots,\varphi(x_{l-2}),z,p_l,\ldots,p_{m+1})
\end{displaymath}
is big. From finiteness of~$\E_p^l(\Sigma)$, we derive the conclusion that the
measure of~$\bad(x_0,\ldots,x_{l-2})$ has to be smaller than the measure of a
ball of~radius $r/(kN)$, hence close to each $\tilde{y}$ there exists $y$ which
does not belong to $\bad(x_0,\ldots,x_{l-2})$. From the fact that
$\Kphi(x_0,\ldots,x_{l-2},y)$ is small, we derive an~estimate on the distance
of~$\varphi(y)$ from $\varphi(x_0) + X$, which in turn gives
the~estimate~$\dgras(X,Y) \lesssim r^{\alpha}$.

In the sequel of this section we always assume that $\Sigma$ satisfies
hypothesis of~Theorem~\ref{thm:C1tau}, $o \in \Sigma$ is fixed, $\varphi$
is~given by~\eqref{def:phi} and $l$ is a fixed number from the set
$\{1,2,\ldots,m+2\}$.

\subsection{Bootstrapping the H{\"o}lder exponent}

Let $S \subseteq \Dom(o)$ be any set and $r \le \frac 12 R_{\laka}$. We define
the oscillation of $\varphi$ on $S$ as follows
\begin{displaymath}
  \Phi(r,S) = \sup \big\{
  \| D\varphi(x) - D\varphi(y) \| :
  x,y \in S,\, |x-y| \le r
  \big\} \,.
\end{displaymath}
For $x,y \in T_o\Sigma$ we set
\begin{displaymath}
  \Disc_r = T_o\Sigma \cap \Ball_r \,,
  \quad
  \Disc(x,r) = x + \Disc_r
  \quad \text{and} \quad
  \Dxy(x,y) = \Disc_{|x-y|} + \tfrac{x+y}{2} \subseteq T_o\Sigma \,.
\end{displaymath}
and we define
\begin{displaymath}
  M_p^l(a,\rho) = \left( \E_p^l\big(\varphi(\CDisc(a,\rho))\big) \right)^{\frac 1p}
  \quad \text{and} \quad
  E_p^l(x,y) = \E_p^l\big(\varphi(\CDxy(x,y))\big)  \,.
\end{displaymath}
Note that if we set $|J\varphi(x)| = \sqrt{\det((D\varphi(x))^t D\varphi(x))}$ and
\begin{displaymath}
  \Kphi(x_0,\ldots,x_{l-1}) = \sup_{p_l,\ldots,p_{m+1} \in \Sigma} \K(\varphi(x_0),\ldots,\varphi(x_{l-1}),p_l,\ldots,p_{m+1}) \,,
\end{displaymath}
\begin{equation}
  \label{eq:Epl}
  \text{then}\quad
  E_p^l(x,y) = \int_{[\CDxy(x,y)]^l}  \Kphi(x_0,\ldots,x_{l-1})^p\ |J \varphi(x_0)| \cdots |J \varphi(x_{l-1})|\ dx_0 \cdots\ dx_{l-1} \,.
\end{equation}

\begin{lem}
  \label{lem:bootstrap}
  For all $k \ge k_0 = 100/\ere$ and $N \ge N_0 = 8$ there exist constants $C_1
  = C_1(m)$ and $C_2 = C_2(m,l,p,k,N)$ such that for all $x,y \in \Disc_{\frac
    16 R_{\laka}}$
  \begin{equation}
    \label{est:Dphi-osc}
    \| D\varphi(x) - D\varphi(y) \| 
    \le C_1 \Phi\left(\tfrac{2|x-y|}{N}, \CDxy(x,y)\right) + C_2 E(x,y)^{\frac 1p} |x-y|^{\alpha} \,.
  \end{equation}
\end{lem}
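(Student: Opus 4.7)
The plan is to compare both tangent spaces $T_{\varphi(x)}\Sigma$ and $T_{\varphi(y)}\Sigma$ with secant $m$-planes $X$ and~$Y$ spanned by $\red$-bases of~$m$-tuples of~points at~scale~$r/N$ (with $r = |x-y|$) centered around~$x$ and~$y$. By~Lemma~\ref{lem:tp-deriv}, \eqref{est:Dphi-osc} reduces to a~bound on~$\dgras(T_{\varphi(x)}\Sigma, T_{\varphi(y)}\Sigma)$, which the triangle inequality
\begin{displaymath}
  \dgras(T_{\varphi(x)}\Sigma, T_{\varphi(y)}\Sigma) \le \dgras(T_{\varphi(x)}\Sigma, X) + \dgras(X,Y) + \dgras(Y, T_{\varphi(y)}\Sigma)
\end{displaymath}
splits into two terms controlled by~$\Phi(2r/N, \CDxy(x,y))$ (via Lemma~\ref{lem:tp-deriv} applied on the small neighborhoods of $x$ and $y$ from which the points spanning $X$ and $Y$ are drawn) plus the cross term~$\dgras(X,Y)$, which is to be bounded by~$E(x,y)^{1/p} r^{\alpha}$ using the energy.

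First, fix an orthonormal basis $(e_1,\ldots,e_m)$ of~$T_o\Sigma$ and target points $x_i^{\star} = x + (r/N) e_i$, $y_i^{\star} = y + (r/N) e_i$ (with $x_0^{\star} = x$, $y_0^{\star} = y$); for $N \ge N_0 = 8$ they lie in~$\Dom(o)$ by Remark~\ref{rem:domain}. Any perturbation $x_i \in \Disc(x_i^{\star}, r/(kN))$, $y_j \in \Disc(y_j^{\star}, r/(kN))$ yields tuples $(x_i - x_0)$ and $(y_j - y_0)$ that are $\red$-bases of~their linear spans with $\rho \asymp r/N$ and~$\varepsilon \lesssim 1/k$. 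Combining Remark~\ref{rem:deriv} with the~condition $k \ge k_0 = 100/\ere$, their images under~$\varphi$ are still $\red$-bases of~$X$ and~$Y$ with~admissible constants. By~Proposition~\ref{prop:red-ang} it then suffices to bound $|\pperp_X(\varphi(y_j) - \varphi(y_0))|$ for each~$j$ by~$C(r/N)\cdot E^{1/p} r^{\alpha}$.

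Next, the selection of points is made via Chebyshev's inequality applied to~\eqref{eq:Epl}. By Fubini's theorem one can fix $x_0, \ldots, x_{l-2}$ in~the product of~balls of radius~$r/(kN)$ around the first $l-1$ target points so that $\int_{\CDxy(x,y)} \Kphi(x_0,\ldots,x_{l-2},z)^p |J\varphi(z)|\, dz \lesssim E(x,y)(kN/r)^{m(l-1)}$. Chebyshev on~$z$ gives
\begin{displaymath}
  \HM^m\big(\bad(x_0,\ldots,x_{l-2})\big) \le \frac{E(x,y)}{\lambda^p (r/(kN))^{m(l-1)}}\,,
  \quad \text{where}\quad
  \bad(x_0,\ldots,x_{l-2}) = \{ z : \Kphi(x_0,\ldots,x_{l-2},z) > \lambda \}.
\end{displaymath}
Choosing $\lambda = C(m,l,p,k,N) E(x,y)^{1/p} r^{-ml/p}$ makes this strictly smaller than $\omega_m(r/(kN))^m$, so that non-bad points~$y_j$ (and similarly the remaining~$x_{l-1},\ldots,x_m$) can be chosen inside the respective target balls. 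For each such~$y_j$, plugging $\varphi(x_{l-1}),\ldots,\varphi(x_m) \in \Sigma$ into the supremum defining~$\Kphi$ gives $\K(\varphi(x_0),\ldots,\varphi(x_m),\varphi(y_j)) \le \lambda$. Since the base simplex $\simp(\varphi(x_0),\ldots,\varphi(x_m))$ has $\HM^m$-measure $\gtrsim (r/N)^m$ (from the~$\red$-basis property together with Remark~\ref{rem:reg-curv}) and the full simplex has diameter~$\lesssim r$, the height of~$\varphi(y_j)$ above $\varphi(x_0)+X$ is~$\lesssim \lambda r^{m+2}/(r/N)^m \asymp E^{1/p}\, r^{1+\alpha}\, N^{C(m,p)}\, k^{m/p}$, where the exponent of~$r$ cancels precisely because $\alpha = 1 - ml/p$. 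Dividing by $|\varphi(y_j) - \varphi(y_0)| \gtrsim r/N$ and invoking Proposition~\ref{prop:red-ang} yields $\dgras(X,Y) \le C_2(m,l,p,k,N) E^{1/p} r^{\alpha}$.

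The main obstacle is careful bookkeeping: arranging that a~\emph{single} Chebyshev selection of~$x_0,\ldots,x_{l-2}$ simultaneously admits good choices of~all remaining $x_i$ and all $y_j$ within their respective target balls; propagating the~$\red$-basis property through the~nonlinear map~$\varphi$ while keeping $\varepsilon$ below~$\ere$ (forcing $k \ge k_0$); and verifying that the $N$-dependent factors absorbed into~$C_2$ do not contaminate the universality $C_1 = C_1(m)$ of~the oscillation term---this last point requires that the ``secant-vs-tangent'' error estimates coming from Lemma~\ref{lem:tp-deriv} produce~a~dimensional constant independent of~$k$, $N$ and~$p$.
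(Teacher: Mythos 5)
Your proposal is correct and follows essentially the same route as the paper: the same triangle inequality through the secant planes $X$ and~$Y$, the same tangent-versus-secant error controlled by $\Phi(2|x-y|/N,\cdot)$ via Lemma~\ref{lem:tp-deriv} and Proposition~\ref{prop:red-ang}, and the same Chebyshev/bad-set argument combined with the height formula for $\HM^{m+1}(\simp(\cdot))$ to bound $\dgras(X,Y)$ by $E(x,y)^{1/p}|x-y|^{\alpha}$. The one place you deviate --- selecting $x_0,\ldots,x_{l-2}$ generically in balls of radius $|x-y|/(kN)$ around the target points by a Fubini argument, rather than fixing them deterministically --- is if anything the more careful version of that step, since the paper's estimate \eqref{est:bad-meas} as written only controls the average of $\HM^m(\bad(x_0,\ldots,x_{l-2}))$ over $(x_0,\ldots,x_{l-2})$.
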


Using this lemma we can prove Theorem~\ref{thm:optimal-reg}.

\begin{proof}[Proof of Theorem~\ref{thm:optimal-reg}]
  Fix some $a~\in \CDisc_{\frac 1{12} R_{\laka}}$ and a~radius $R \in
  (0,\frac{1}{36} R_{\laka}]$. Taking the supremum on both sides
  of~\eqref{est:Dphi-osc} over all $x,y \in \CDisc(a,R)$ satisfying $|x-y| \le r
  \le R$ we obtain the estimate
  \begin{displaymath}
    \Phi(r, \CDisc(a,R))
    \le C_1 \Phi \left( \tfrac{2r}N, \CDisc(a,R+r) \right)
    + C_2 M_p^l(a,R+r) r^{\alpha} \,.
  \end{displaymath}
  Choose any $j \in \N$. Iterating the above inequality $j$ times we get
  \begin{displaymath}
    \Phi(r, \CDisc(a,R))
    \le C_1^j \Phi \left( 2^j N^{-j} r, \CDisc(R + r_j) \right) 
    + C_2 M_p(a,R + r_j) r^{\alpha} 
    \sum_{l=0}^{j-1} \left( \frac{C_1}{N^{\alpha}} \right)^l \,,
  \end{displaymath}
  where $r_j = r \sum_{l=0}^{j-1} 2^l N^{-l} \le 2r$. Recall that we know
  a~priori that $\varphi$ is $C^{1,\lambda/\kappa}$-smooth, so we can estimate
  the~first term on the right-hand side by
  \begin{displaymath}
    \Phi \big( 2^j N^{-j} r, \CDisc(a,R + r_j) \big)
    \le C_{\laka} 2^{j \lambda/\kappa} N^{-j \lambda/\kappa} r^{\lambda/\kappa} \,,
  \end{displaymath}
  \begin{displaymath}
    \text{which gives} \quad
    \Phi(r, \CDisc(a,R))
    \le C_{\laka} (C_1 N^{-\lambda/\kappa})^j r^{\lambda/\kappa}
    + C_2 M_p^l(a,3R) r^{\alpha} \sum_{l=0}^{j-1} (C_1 N^{-\alpha})^l 
  \end{displaymath}
  for each $j \in \N$. To ensure that the first term disappears and that
  the~second term converges when $j \to \infty$ we need to know the following
  \begin{equation}
    \label{cond:N2}
    C_1 2^{\lambda/\kappa} N^{-\lambda/\kappa} < 1
    \quad \text{and} \quad
    C_1 N^{-\alpha} < 1 \,.
  \end{equation}
  Since $C_1 = C_1(m)$, we can find $N = N(m,l,p) \ge N_0$ for which
  condition~\eqref{cond:N2} is satisfied. Passing with $j$ to the limit $j \to
  \infty$ we obtain the bound
  \begin{displaymath}
    \Phi(r, \CDisc(a,R))
    \le C_2 M_p^l(a,3R) \sum_{l=0}^{\infty} (C_1 N^{-\alpha})^l r^{\alpha}
    = C(m,l,p) M_p^l(a,3R) r^{\alpha} \,.
  \end{displaymath}
  Hence, for any $x,y \in \CDisc_{\frac{1}{36}R_{\laka}}$, taking $a =
  \frac{x+y}2$ and $r = R = |x-y|$ we get
  \begin{displaymath}
    \|D\varphi(x) - D\varphi(y)\| \le C(m,l,p) M_p^l\big(\tfrac{x+y}{2},3|x-y|\big) |x-y|^{\alpha} \,. \qedhere
  \end{displaymath}
\end{proof}

\begin{proof}[Proof of Lemma~\ref{lem:bootstrap}]
  Let us fix $x,y \in \Disc_{\frac 16 R_{\laka}}$. Since $|x-y| < \frac 13
  R_{\laka}$ and $\frac{|x+y|}2 < \frac 16 R_{\laka}$, we~have $\Dxy(x,y)
  \subseteq \Disc_{\frac 12 R_{\laka}}$. Let $x_0,\ldots,x_{l-2} \in
  \Dxy(x,y)$. We define the sets of~\emph{bad parameters}
  \begin{displaymath}
    \bad(x_0, \ldots, x_{l-2}) = \left\{
      z \in \CDxy(x,y) :
      \Kphi(x_0, \ldots, x_{l-2},z)^p \ge \frac{(kN)^m}{|x-y|^{ml} \omega_m^l} E_p^l(x,y)
    \right\}
  \end{displaymath}
  Recalling~\eqref{eq:Epl} and using the fact that $|J\varphi| \ge 1$ we can
  estimate the measure of $\bad(x_0, \ldots, x_{l-2})$ as follows
  \begin{align}
    E_p^l(x,y)
    &\ge \int_{[\Dxy(x,y)]^{l-1}} \int_{\bad(x_0,\ldots,x_{l-2})} \Kphi(x_0,\ldots,x_{l-2},z)\ dz\ dx_0 \cdots dx_{l-2} \notag \\
    &\ge \omega_m^{l-1} |x-y|^{m(l-1)} \HM^m(\bad(x_0,\ldots,x_{l-2})) \frac{(kN)^m}{|x-y|^{ml} \omega_m^l} E_p^l(x,y) \notag \\
    \label{est:bad-meas}
    &\iff \quad
    \HM^m(\bad(x_0,\ldots,x_{l-2})) \le \omega_m \left( \frac{|x-y|}{kN} \right)^m \,.
  \end{align}

  Fix an orthonormal basis $(e_1,\ldots,e_m)$ of~$T_o\Sigma$. For $i =
  1,\ldots,m$ we set
  \begin{align*}
    x_0 &= x \,,& x_i &= x_0 + \tfrac{|x-y|}{N}e_i \,,&
    \tilde{y}_0 &= y &\text{and}&& \tilde{y}_i &= \tilde{y}_0 + \tfrac{|x-y|}{N}e_i \,.
  \end{align*}
  Estimate~\eqref{est:bad-meas} shows that we can find
  \begin{displaymath}
    y_0,\ldots,y_m \in \CDxy(x,y) \setminus \bad(x_0,\ldots,x_{l-2}) \,,
    \quad \text{such that} \quad
    |y_i - \tilde{y}_i| \le \frac{|x-y|}{kN}
  \end{displaymath}
  for each $i = 0, \ldots, m$. We set
  \begin{displaymath}
    X = \opspan\{ \varphi(x_i) - \varphi(x_0) \}_{i=1}^m 
    \quad \text{and} \quad
    Y = \opspan\{ \varphi(y_i) - \varphi(y_0) \}_{i=1}^m \,.
  \end{displaymath}
  Using Lemma~\ref{lem:tp-deriv} we obtain
  \begin{multline}
    \label{est:Dphi-xy}
    \| D\varphi(x) - D\varphi(y) \|
    \le \| D\varphi(x) - D\varphi(x_0) \| 
    + \| D\varphi(x_0) - D\varphi(y_0) \| 
    + \| D\varphi(y_0) - D\varphi(y) \| \\
    \le 2\Phi\left(\tfrac{|x-y|}{kN},\CDxy(x,y)\right) 
    + 4 \dgras(T_{\varphi(x_0)}\Sigma,T_{\varphi(y_0)}\Sigma) \\
    \le 2\Phi\left(\tfrac{|x-y|}{kN},\CDxy(x,y)\right) 
    + 4\dgras(T_{\varphi(x_0)}\Sigma,X) 
    + 4\dgras(X,Y) 
    + 4\dgras(Y,T_{\varphi(y_0)}\Sigma) \,.
  \end{multline}

  For each $i = 1,\ldots,m$, from the fundamental theorem of calculus we have
  \begin{align*}
    v_i &= \varphi(x_i) - \varphi(x_0) = \int_0^1 \tfrac{d}{dt} \left(
      \varphi(x_0 + t(x_i - x_0))
    \right)\ dt \\
    &= \int_0^1 \left( D\varphi(x_0 + t(x_i - x_0)) - D\varphi(x_0) \right) (x_i - x_0) \ dt
    + D\varphi(x_0) (x_i - x_0) \\
    &= \sigma_i + w_i \,.
  \end{align*}
  Observe that $w_1,\ldots,w_m$ forms a basis of $T_{\varphi(x_0)}\Sigma$ and
  $v_1,\ldots,v_m$ forms a basis of $X$. Using the above estimate we see that
  \begin{displaymath}
    |v_i - w_i| = |\sigma_i| 
    \le \Phi\big(|x_i - x_0|, \Dxy(x,y)\big) |x_i - x_0|
    = \Phi\Big( \tfrac{|x-y|}N, \Dxy(x,y)\Big) \tfrac{|x-y|}N  \,,
  \end{displaymath}
  Let $a_i = x_i - x_0 = \frac{|x-y|}N e_i$ and $b_i = F_o(x_i) - F_o(x_0)$. Then $v_i
  = a_i + b_i$. From Remark~\ref{rem:deriv} we know that $|b_i| \le 2\iota |a_i|
  = \frac{|x-y|}{50 N} \ere$, hence
  \begin{displaymath}
    \frac{|x-y|^2}{N^2} \left(\delta_i^j - \frac{\ere}{25} - \frac{\ere^2}{50^2}\right)
    \le |\langle v_i, v_j \rangle| = |\langle a_i + b_i, a_j + b_j \rangle| \le
    \frac{|x-y|^2}{N^2} \left(\delta_i^j + \frac{\ere}{25} + \frac{\ere^2}{50^2}\right) \,.
  \end{displaymath}
  Applying Proposition~\ref{prop:red-ang} we come to
  \begin{equation}
    \label{est:Tx}
    \dgras(T_{\varphi(x_0)}\Sigma, X) 
    \le C_{\red} \Phi\Big(\tfrac{|x-y|}N,\Dxy(x,y)\Big) \,.
  \end{equation}

  We estimate $\dgras(T_{\varphi(y_0)}\Sigma, Y)$ in a~similar way. For $i =
  1,\ldots,m$ we define $\bar{v}_i$, $\bar{w}_i$, $\bar{a}_i$ and~$\bar{b}_i$ as
  follows
  \begin{displaymath}
    \bar{a}_i = y_i - y_0 \,, \qquad \bar{b}_i = F_o(y_i) - F_o(y_0) \,,
  \end{displaymath}
  \begin{displaymath}
    \bar{v}_i = \varphi(y_i) - \varphi(y_0) = \bar{a}_i + \bar{b}_i 
    \quad \text{and} \quad
    \bar{w}_i = D\varphi(y_0) (y_i - y_0) \,,
  \end{displaymath}
  so that $Y = \opspan\{\bar{v}_1,\ldots,\bar{v}_m\}$ and
  $T_{\varphi(y_0)}\Sigma = \opspan\{\bar{w}_1,\ldots,\bar{w}_m\}$. Again, using
  the fundamental theorem of calculus, we get
  \begin{displaymath}
    |\bar{v}_i - \bar{w}_i| =
    \le \Phi\big(|y_i - y_0|, \Dxy(x,y)\big) |y_i - y_0|
    \le 2 \Phi\Big( \tfrac{2|x-y|}N, \Dxy(x,y)\Big) \tfrac{|x-y|}N  \,,
  \end{displaymath}
  Recall that $k \ge 100/\ere$. It is easy to verify that
  \begin{displaymath}
    \frac{|x-y|^2}{N^2} \left( \delta_i^j - \frac 8k \right)
    \le |\langle \bar{a}_i, \bar{a}_j \rangle| \le 
    \frac{|x-y|^2}{N^2} \left( \delta_i^j + \frac 8k \right) \,,
  \end{displaymath}
  which implies that $|\bar{b}_i| \le 2\iota |\bar{a}_i| \le \frac{|x-y|}{25 N}
  \ere$. Therefore
  \begin{displaymath}
    \frac{|x-y|^2}{N^2} \left(\delta_i^j - \ere \right)
    \le |\langle \bar{v}_i, \bar{v}_j \rangle| = |\langle \bar{a}_i + \bar{b}_i, \bar{a}_j + \bar{b}_j \rangle| \le
    \frac{|x-y|^2}{N^2} \left(\delta_i^j + \ere\right) 
  \end{displaymath}
  and we can apply Proposition~\ref{prop:red-ang} once more obtaining
  \begin{equation}
    \label{est:Ty}
    \dgras(T_{\varphi(y_0)}\Sigma, Y) 
    \le 2 C_{\red} \Phi\Big(\tfrac{2|x-y|}N,\Dxy(x,y)\Big) \,.
  \end{equation}
  Combining estimates \eqref{est:Ty}, \eqref{est:Tx} and \eqref{est:Dphi-xy} and
  using Lemma~\ref{lem:tp-deriv} we get
  \begin{equation}
    \label{est:Dphi-xy2}
    \| D\varphi(x) - D\varphi(y) \|
    \le C_1(m) \Phi \left(\tfrac{2|x-y|}{kN},\CDxy(x,y)\right) + 4 \dgras(X,Y) \,.
  \end{equation}
  Hence, we only need to estimate $\dgras(X,Y)$.

  Observe that for each $z \in \Dxy(x,y) \setminus \bad(x_0,\ldots,x_{l-2})$ we have
  \begin{displaymath}
    \Kphi(x_0,\ldots,x_{l-2},z) \le
    \frac{(kN)^{m/p}}{\omega_m^{l/p}\,|x-y|^{ml/p}} E_p^l(x,y)^{1/p} \,.
  \end{displaymath}
  Directly from the definition of $\Kphi$ we also have
  \begin{multline*}
    \Kphi(x_0,\ldots,x_{l-2},z) 
    \ge \K(\varphi(x_0),\ldots,\varphi(x_{m}),\varphi(z))  \\
    = \frac{\HM^m(\simp(\varphi(x_0),\ldots,\varphi(x_m)) \dist(\varphi(z), \varphi(x_0) + X))}
    {(m+1) \diam(\varphi(x_0),\ldots,\varphi(x_m),\varphi(z))^{m+2}} \\
    \ge \frac{\HM^m(\simp(x_0,\ldots,x_m)) \dist(\varphi(z), \varphi(x_0) + X)}{(m+1) (2|x-y|)^{m+2}} 
    = \frac{\dist(\varphi(z), \varphi(x_0) + X)}{(m+1)! N^m 2^{m+2} |x-y|^{2}} \,.
  \end{multline*}
  Hence
  \begin{displaymath}
    \dist(\varphi(z), \varphi(x_0) + X) 
    \le C(m,l,p,k,N) E_p^l(x,y)^{1/p} |x-y|^{1 - \frac{ml}{p}} \frac{|x-y|}{N} \,.
  \end{displaymath}
  We have shown already that $\bar{v}_1,\ldots,\bar{v}_m$ forms a~$\red$-basis
  of~$Y$ with $\rho = \frac{|x-y|}{N}$ and $\varepsilon = \ere$. Moreover, since
  $y_i \notin \bad(x_0,\ldots,x_{l-2})$, we have
  \begin{align*}
    \dist(\bar{v}_i,X) = |\pperp_{X} \bar{v}_i| 
    &\le \dist(\varphi(y_i), \varphi(x_0) + X) + \dist(\varphi(y_0), \varphi(x_0) + X) \\
    &\le 2 C(m,l,p,k,N) E_p^l(x,y)^{1/p} |x-y|^{1 - \frac{ml}{p}} \frac{|x-y|}{N} \,.
  \end{align*}
  Thence, by Proposition~\ref{prop:red-ang}, the following holds
  \begin{displaymath}
    \dgras(X,Y) \le \tilde{C}(m,l,p,k,N) E_p^l(x,y)^{1/p} |x-y|^{1 - \frac{ml}{p}} \,.
  \end{displaymath}
  Together with~\eqref{est:Dphi-xy2} this gives~\eqref{est:Dphi-osc} and
  Lemma~\ref{lem:bootstrap} is proven.
\end{proof}




\section*{Acknowledgements}
Much of~the material presented in~this article was contained in~the author's
doctoral thesis written at~University of~Warsaw under the direction of~Pawe{\l}
Strzelecki. I~wish~to thank Professor Strzelecki for all his helpful advice
and~encouragement.



\bibliography{menger}{}
\bibliographystyle{hplain}

\end{document}